\documentclass[11pt,a4paper]{article}

\usepackage[utf8]{inputenc}					

\usepackage{lmodern}						
\usepackage[english]{babel}					


\usepackage{amsmath,amsfonts,amssymb,amsthm}
\allowdisplaybreaks

\usepackage{stmaryrd}

\usepackage{todonotes}\setlength{\marginparwidth}{2.5cm}
\usepackage{tikz-cd}

\usepackage[left=3cm,right=3cm,top=3cm,bottom=3cm]{geometry}

\usepackage{mathtools}						
\mathtoolsset{showonlyrefs,showmanualtags}	

\usepackage{enumitem}

\DeclareMathAlphabet{\mathbbold}{U}{bbold}{m}{n}	


\theoremstyle{plain}
\newtheorem{thm-intro}{Theorem}
\newtheorem{cor-intro}[thm-intro]{Corollary}

\newtheorem*{theorem*}{Theorem}
\newtheorem{theorem}{Theorem}[section]
\newtheorem{prop}[theorem]{Proposition}
\newtheorem{cor}[theorem]{Corollary}
\newtheorem{corollary}[theorem]{Corollary}
\newtheorem{lemma}[theorem]{Lemma}

\theoremstyle{definition}
\newtheorem{definition}[theorem]{Definition}

\newtheorem*{assumption*}{\assumptionnumber}
\providecommand{\assumptionnumber}{}
\makeatletter
\newenvironment{assumption}[1]
 {%
  \renewcommand{\assumptionnumber}{Assumption #1}%
  \begin{assumption*}%
  \protected@edef\@currentlabel{#1}%
 }
 {%
  \end{assumption*}
 }
\makeatother

\theoremstyle{remark}
\newtheorem{rmk}{Remark}[section]
\newtheorem{example}{Example}[section]


\DeclareMathOperator{\tr}{\mathrm{Tr}}
		
\DeclareMathOperator{\supp}{supp}

\DeclareMathOperator{\Hess}{Hess}

\DeclareMathOperator{\Sec}{\mathrm{Sec}}
\DeclareMathOperator{\Ric}{Ric}
\DeclareMathOperator{\inj}{inj}

\newcommand{\N}{\mathbb{N}}					
\newcommand{\R}{\mathbb{R}}					
\renewcommand{\epsilon}{\varepsilon}		
\newcommand{\M}{\mathbb{M}}
\newcommand{\SSS}{\Sigma}				
\newcommand{\KK}{M}						

\newcommand{\vol}{\mathrm{vol}}

\newcommand{\dom}{D}

\newcommand{\distb}{d_{\partial}}
\newcommand{\distmb}{\delta}
\newcommand{\vv}{\upsilon}

\usepackage{hyperref}
\hypersetup{
    colorlinks,
    linkcolor={red!50!black},
    citecolor={blue!50!black},
    urlcolor={blue!80!black},
}

\title{Weyl's law for singular Riemannian manifolds}

\author{Y.\ Chitour\thanks{Laboratoire des Signaux et Systèmes, Universit\'e Paris-Sud, CentraleSup\'elec, Gif-sur-Yvette},
D.\ Prandi\thanks{CNRS, Laboratoire des Signaux et Systèmes, CentraleSup\'elec, Gif-sur-Yvette}, and
L.\ Rizzi\thanks{SISSA, Trieste, Italy \& Univ. Grenoble Alpes, CNRS, Institut Fourier, F-38000 Grenoble, France}}




 
\begin{document}

\maketitle

\begin{abstract}
We study the asymptotic growth of the eigenvalues of the Laplace-Beltrami operator on singular Riemannian manifolds, where all geometrical invariants appearing in classical spectral asymptotics are unbounded, and the total volume can be infinite. 
Under suitable assumptions on the curvature blow-up, we show how the singularity influences the Weyl's asymptotics. Our main motivation comes from the construction of singular Riemannian metrics with prescribed non-classical Weyl's law. Namely, for any non-decreasing slowly varying function $\vv$ we construct a singular Riemannian structure whose spectrum is discrete and satisfies
\[
N(\lambda) \sim \frac{\omega_n}{(2\pi)^n} \lambda^{n/2} \vv(\lambda).
\]
Examples of slowly varying functions are $\log\lambda$, its iterations $\log_k \lambda = \log_{k-1}\log\lambda$, any rational function with positive coefficients of $\log_k \lambda$, and functions with non-lo\-ga\-rithmic growth such as $\exp\left((\log \lambda)^{\alpha_1} \dots (\log_k \lambda)^{\alpha_k} \right)$ for $\alpha_i \in (0,1)$.
A key tool in our arguments is a new quantitative estimate for the remainder of the heat trace and the Weyl's function on Riemannian manifolds, which is of independent interest.
\end{abstract}

\setcounter{tocdepth}{2}
\tableofcontents

\section{Introduction}\label{s:intro}
 
In spectral geometry,  the asymptotic behaviour of the eigenvalues of the Laplace–Beltrami operator on a smooth and compact Riemannian manifold $\M$ is given by the Weyl's law:
\begin{equation}\label{eq:wl1}
  \lim_{\lambda\to \infty} \frac{N(\lambda)}{\lambda^{n/2}} = \frac{\omega_n}{(2\pi)^n} \vol(\M).
\end{equation}
Here, $N(\lambda)$ is the number of eigenvalues smaller than $\lambda$ for the Dirichlet Laplacian on a smooth Riemannian manifold $\M$, possibly with boundary, $\vol(\M)$ stands for the Riemannian volume of $\M$ and $\omega_n$ is the volume of the $n$-dimensional Euclidean unit ball.

The study of eigenvalue asymptotics is a rich topic with a long history. The subject has been treated in several setting and with different methods, see for example \cite{Ivrii100,Ivrii-II,Shubin,Chavel}. In this paper, we focus on Weyl's-type asymptotics for the Laplace-Beltrami operator of a class of singular Riemannian structures, where all geometric invariants, including the curvature and the volume, can be unbounded when approaching the singularity. 

\subsection{The Grushin sphere model}\label{s:grushin}

We first discuss a simple model, emphasizing the peculiarities of our setting and the differences with existing results. Consider the two-dimensional sphere $\mathbb{S}^2\subset \R^3$. Let $X$ and $Y$ be the generators of rotations around the $x$ and $y$ axis, respectively. We define a Riemannian structure by declaring $X$ and $Y$ to be orthonormal. These vector fields are collinear on the equator $S = \{(x,y,z) \in \mathbb{S}^2\mid z=0 \}$, and hence the metric tensor we defined is singular on $S$ (the coefficients of the metric explode). This is an almost-Riemannian structure in the sense of \cite{ABS-GaussBonnet,BL-ARS}. In cylindrical coordinates $(\theta,z) \in (0,2\pi)\times (-1,1)$, the associated Laplace-Beltrami operator $\Delta$, with domain $C^\infty_c(\mathbb{S}^2\setminus S)$ is 
\begin{equation}\label{eq:LB-Grushin}
-\Delta =\frac{\partial^2}{\partial z^2}+z^2\frac{\partial^2}{\partial \theta^2} +\left(\frac{1}{z}-z\right) \frac{\partial}{\partial z}.
\end{equation}
By construction, $\Delta$ is symmetric on $L^2(\mathbb{S}^2\setminus S,d\mu_g)$, where the Riemannian measure is
\begin{equation}
d\mu_g = \frac{1}{|z|}d\theta dz.
\end{equation}
It turns out that $\Delta$ is essentially self-adjoint with compact resolvent \cite{BL-ARS}. The spectrum can be computed, cf.\ \cite{BPS-Bohm}, and it satisfies the following non-classical Weyl's asymptotics:
\begin{equation}
N(\lambda) \sim  \frac{1}{4} \lambda\log\lambda, \qquad \lambda \to \infty.
\end{equation}
Despite the problem taking place on a relatively compact space, the total Riemannian volume is infinite and the curvature explodes to $-\infty$ when approaching the equator. Hence, on-diagonal small-time heat kernel estimates blow up at the singular region. In particular, it is not clear how to deduce the asymptotics of $N(\lambda)$ using classical Tauberian techniques.

\medskip
The class of singular structures that we study in this paper is inspired by the Grushin sphere, and it is determined by the control on the blow-up of intrinsic quantities such as curvature, injectivity radius, cf.\ Assumption \ref{a:singularity}. This class of geometric structures is particularly relevant in questions arising in sub-Riemmanian geometry. The corresponding Laplace-Beltrami operators should not be confused with the class of degenerate operators studied in the vast literature in microlocal analysis, cf.\ for example \cite{Sjostrand-IV,BCP-EDP} and references within, or \cite[Ch.\ 12]{Ivrii-II}. In these references, general operators with degenerate symbol are studied on suitable $L^2$ spaces, with respect to a measure that remains regular at the degeneration locus. In all these cases the corresponding integral kernels remain well-defined at the degeneration locus, and their asymptotics there can be recovered via microlocal methods (cf.\ for example \cite[Thm.\ 12.5.10]{Ivrii-II}). Our class of operators is not even defined at the singularity, and the full symbol has singular terms, as it is clear from example \eqref{eq:LB-Grushin}.

\subsection{Setting and main results}

Let $(\M,g)$ be a non-complete Riemannian manifold. Intrinsic quantities such as the curvature, the measure of balls, et cætera, can blow up when approaching the metric boundary of $\M$, which we thus consider as a singularity. We require the following assumption.

\begin{assumption}{$\mathbf{A}$}\label{a:singularity}
Let $\delta$ be the distance from the metric boundary of $\M$. Then, there exists a neighborhood $U = \{\delta < \varepsilon_0\}$ on which the following hold:
\begin{itemize}
\item[(a)] regularity: $\delta$ is smooth;
\item[(b)] convexity: the level sets of $\delta$ are convex, i.e., $\Hess(\delta) \leq 0$;
\item[(c)] curvature control: there exists $C>0$ such that $|\Sec | \leq C \delta^{-2}$;
\item[(d)] injectivity radius control: there exists $C>0$ such that $\inj \geq C \delta$.
\end{itemize}
\end{assumption}

By (a), we identify $U \simeq (0,\varepsilon_0)\times Z$, for a fixed $(n-1)$-dimensional manifold $Z$ without boundary. The metric on $U$ has the form
\begin{equation}\label{eq:metricnormalform}
g = dx^2 + h(x),
\end{equation}
where $h(x)$ is a smooth family of Riemannian metrics on $Z$, and $\delta(x,z) = x$ for $(x,z)\in U$. 

\begin{rmk}\label{r:improvements}
Item (d) is implied by the others if the convexity is strict, or if the metric is of warped product type in a neighborhood of the singularity, cf.\ Proposition~\ref{prop:inj-strict-convex}. We do not known whether (d) is independent from the other assumptions in full generality.
\end{rmk}
\begin{rmk}
Assumption (b) implies that the sectional curvature cannot explode to $+\infty$ in the following sense: for any lower bound $K_\varepsilon$ of $\Sec$ on $\M_\varepsilon^\infty=\{\varepsilon\le \delta\le \infty\}$, one has $\liminf_{\varepsilon \to 0} K_\varepsilon < +\infty$. However, one can build examples satisfying Assumption \ref{a:singularity} with curvature oscillating between $\pm \infty$ as $\delta \to 0$. (E.g., take $\vv(\lambda) = 3\log\lambda + \sin\log\lambda$ in the construction of the proof of Theorem \ref{t:converse}.)
\end{rmk}

Let $\Delta$ be the Friedrichs extension of the Laplace-Beltrami operator on $(\M,g)$, that is the unique self-adjoint operator in $L^2(\M,d\mu_g)$ associated with the quadratic form
\begin{equation}
Q(u) = \int_{\M} |\nabla u|^2 d\mu_g, \qquad \forall u \in C^\infty_c(\M).
\end{equation}

To quantify the rate of growth of the volume at the singularity, let $\M_{\varepsilon}^\infty$ be the set at distance greater than $\varepsilon>0$ from the metric boundary, and define
\begin{equation}
\vv(\lambda):= \vol\left(\M_{1/\sqrt{\lambda}}^\infty\right).
\end{equation}
Our main result is a precise Weyl's law under an additional assumption on the volume growth, ruling out rapid oscillations and growth. See Theorem \ref{t:weylexact}. 
\begin{theorem}\label{t:weylexact-intro}
Let $\M$ be an $n$-dimensional Riemannian manifold with compact metric completion and satisfying Assumption~\ref{a:singularity}. Then, if $\vv$ is slowly varying, we have
\begin{equation}\label{eq:weyl-exact-intro}
        \lim_{\lambda\to \infty} \frac{N(\lambda)}{\lambda^{n/2}\vv(\lambda)} = \frac{\omega_n}{(2\pi)^n}.
    \end{equation}
    Recall that $\vv$ is slowly varying at infinity if $\vv(a\lambda)\sim \vv(\lambda)$ as $\lambda \to \infty$ for all positive $a$.
\end{theorem}
\begin{rmk}
Examples of slowly varying functions are logarithms and their iterations
\begin{equation}
\log\lambda, \qquad \log_k \lambda= \log_{k-1}\log \lambda, \qquad k =2,3,\dots,
\end{equation}
and any rational function with positive coefficients formed with the above. This class also contains functions with non-logarithmic growth such as
\begin{equation}
\exp\left((\log \lambda)^{\alpha_1} \dots (\log_k \lambda)^{\alpha_k} \right), \qquad 0<\alpha_i<1.
\end{equation}
\end{rmk}
\begin{rmk}
 The assumptions of Theorem \ref{t:weylexact-intro} are verified for the Grushin sphere of Section \ref{s:grushin}, and more generally for generic $2$-dimensional ARS without tangency points \cite{BL-ARS}. In these cases, $\vv(\lambda) = \sigma \log\lambda$ for some $\sigma>0$ depending on the structure, see Section \ref{s:ARS}.
\end{rmk}

We now turn to the inverse problem of building structures with prescribed large eigenvalues asymptotic. Our next main result can be seen as a counterpart at infinity of a celebrated result of Colin de Verdière \cite{Yves-partofspectrum} stating that, for any finite sequence of numbers $0 < \lambda_1 \leq \lambda_2 \leq \dots \leq \lambda_m$, one can find a compact Riemannian manifold such that these numbers are the first $m$ eigenvalues. See also \cite{Lohkamp}. See Theorem \ref{t:converse}.

\begin{theorem}\label{t:converse-intro}
Let $\KK$ be an $n$-dimensional compact manifold, $S\subset \KK$ be a closed submanifold, and $\vv : \R_+ \to \R_+$ be a non-decreasing slowly varying function. Then, there exists a Riemannian structure on $\KK$, singular at $S$, such that Weyl's law \eqref{eq:weyl-exact-intro} holds for the non-complete Riemannian manifold $\M = \KK\setminus S$.
\end{theorem}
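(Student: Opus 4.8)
The plan is to reduce Theorem~\ref{t:converse-intro} to Theorem~\ref{t:weylexact-intro} by explicitly constructing, near the prescribed singular set $S\subset N$, a Riemannian metric that (i) satisfies Assumption~\ref{a:singularity} and (ii) has volume growth function $\vv(\lambda)$ matching the prescribed slowly varying function. Away from a tubular neighborhood of $S$ the metric can be taken to be any fixed smooth metric on the compact manifold $N\setminus S$; since that region is relatively compact with smooth metric, it contributes only $O(\lambda^{n/2})$ to $N(\lambda)$, which is negligible against $\lambda^{n/2}\vv(\lambda)$ when $\vv$ is unbounded (and when $\vv$ is bounded, i.e.\ asymptotically constant, the statement is just the classical Weyl law and there is nothing new). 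So the whole content is local near $S$.

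First I would fix a tubular neighborhood $U\simeq (0,\varepsilon_0)\times Z$ of $S$, where $Z$ is the unit normal sphere bundle of $S$ in $N$ (an $(n-1)$-dimensional closed manifold), and look for a metric in the normal form $g = dx^2 + h(x)$ of \eqref{eq:metricnormalform}, with $x=\distmb$ the distance to $S$. The key is to choose the one-parameter family $h(x)$ of metrics on $Z$ so that the three analytic constraints of Assumption~\ref{a:singularity}(b)--(d) hold and the volume is prescribed. A convenient ansatz is a conformally-warped family $h(x) = f(x)^2\, h_0$ for a fixed metric $h_0$ on $Z$ and a smooth positive function $f:(0,\varepsilon_0)\to\R_+$ that blows up as $x\to 0^+$; this is exactly the warped-product case singled out in Remark~\ref{r:improvements}, so (d) will follow automatically from (b) and (c), and it remains only to arrange (b) convexity (which for a warped product amounts to $f$ being non-increasing, i.e.\ $f'\le 0$) and (c) the curvature bound $|\Sec|\le C x^{-2}$ (which, for $g=dx^2+f(x)^2 h_0$, translates into bounds of the form $|f''/f|\lesssim x^{-2}$ and $|f'/f|\lesssim x^{-1}$, plus a term from the curvature of $h_0$ controlled by $f^{-2}\le x^{-2}$ once $f\gtrsim x$). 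Then
\[
\vv(\lambda) = \vol\!\left(\M_{1/\sqrt\lambda}^\infty\right) \sim \vol(Z,h_0)\int_{1/\sqrt\lambda}^{\varepsilon_0} f(x)^{n-1}\,dx + \text{const},
\]
so I need to solve, approximately, the inverse problem of choosing $f$ so that this integral is asymptotic to the given $\vv(\lambda)$.

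The translation of the problem to $f$ goes as follows: writing $F(t) := \int_t^{\varepsilon_0} f(x)^{n-1}dx$ for small $t$, I want $F(1/\sqrt\lambda)\sim c\,\vv(\lambda)$, i.e.\ $F(t) \sim c\,\vv(1/t^2)$ as $t\to 0^+$. Since $\vv$ is slowly varying, $\lambda\mapsto\vv(1/t^2)$ is slowly varying in $1/t$, hence $F(t)$ should be slowly varying at $0$; differentiating, $f(x)^{n-1} = -F'(x)$ must then be regularly varying of index $-1$ at $0$, i.e.\ $f(x)^{n-1}\approx x^{-1}\ell(x)$ for a slowly varying $\ell$, so one takes $f(x) = \bigl(x^{-1}\psi(x)\bigr)^{1/(n-1)}$ for an appropriate slowly varying $\psi$ built from $\vv$. (For instance, the Grushin model $\vv(\lambda)=\sigma\log\lambda$ corresponds, in dimension $2$, to $f(x)\sim \text{const}/x$, recovering the behavior of the almost-Riemannian metric.) One must verify: (1) $f$ can be chosen smooth on $(0,\varepsilon_0)$, positive, non-increasing, with $f(x)\gtrsim x$ (so that $f^{-2}\lesssim x^{-2}$), so that (b) holds and the $h_0$-curvature term in (c) is controlled; (2) the derivatives satisfy $|f'/f|\lesssim x^{-1}$ and $|f''/f|\lesssim x^{-2}$ — here I would use Karamata-type representations of slowly varying functions (absolutely continuous version with $x\ell'(x)/\ell(x)\to 0$), smoothing if needed, which makes all these derivative bounds automatic since differentiating a slowly varying function multiplies by $o(1/x)$; (3) near $x=\varepsilon_0$ one smoothly glues $f$ to make $g$ extend to the fixed smooth metric on $N\setminus S$ (this only changes $\vv$ by an additive constant, harmless). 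Then Assumption~\ref{a:singularity} holds, $\vv$ as defined from the constructed metric is (asymptotic to) the prescribed slowly varying function, and Theorem~\ref{t:weylexact-intro} applies verbatim to give \eqref{eq:weyl-exact-intro}.

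The main obstacle I anticipate is the verification of the curvature control (c) — not the leading-order scaling, which is forced by the choice of $f\sim (x^{-1}\psi(x))^{1/(n-1)}$, but controlling \emph{all} components of the sectional curvature of $g=dx^2+f(x)^2h_0$ uniformly, including the "mixed" curvatures involving directions tangent to $S$ versus normal to $S$ and the intrinsic curvature of $(Z,h_0)$, and doing so while keeping $f$ genuinely monotone and smooth down to (but not at) $x=0$. The cleanest route is to first establish a general lemma: for a warped product $g=dx^2+f(x)^2 h_0$ on $(0,\varepsilon_0)\times Z$ with $h_0$ fixed, Assumption~\ref{a:singularity} holds provided $f>0$ is smooth and non-increasing with $x|f'(x)/f(x)| + x^2|f''(x)/f(x)| \le C$ and $x/f(x)\le C$ for $x$ small; then the construction reduces to the purely one-dimensional problem of producing such an $f$ with the prescribed integral asymptotics, which the Karamata representation theorem handles. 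A secondary subtlety is that $\vv$ is only assumed \emph{non-decreasing} and slowly varying, not strictly increasing or smooth, so one should phrase the target as "there exists a metric whose volume function $\tilde\vv$ satisfies $\tilde\vv(\lambda)\sim\vv(\lambda)$", which is enough because \eqref{eq:weyl-exact-intro} is an asymptotic equivalence and is insensitive to replacing $\vv$ by an asymptotically equivalent function.
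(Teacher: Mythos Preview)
Your overall architecture coincides with the paper's: build a warped product $g=dx^2+f(x)^2 h_0$ on a tubular neighbourhood of $S$, choose $f$ so that $\int_\varepsilon^{\varepsilon_0} f^{n-1}\,dx\sim \vv(1/\varepsilon^2)$, verify Assumption~\ref{a:singularity}, and invoke Theorem~\ref{t:weylexact-intro}. You also correctly identify the warped-product case of Proposition~\ref{prop:inj-strict-convex} so that (d) is automatic, and you correctly isolate the curvature check as the crux.

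The genuine gap is in your step (2). You assert that the bounds $|f'/f|\lesssim x^{-1}$ and $|f''/f|\lesssim x^{-2}$ follow from ``Karamata-type representations \dots\ smoothing if needed, which makes all these derivative bounds automatic since differentiating a slowly varying function multiplies by $o(1/x)$''. This is false beyond the first derivative. The normalised Karamata representation $\ell(\lambda)=c\exp\bigl(\int^\lambda \varepsilon(t)t^{-1}dt\bigr)$ with $\varepsilon\to 0$ gives $\lambda\ell'/\ell=\varepsilon(\lambda)\to 0$, but says nothing about $\varepsilon'$, and hence nothing about $\ell''$. Concretely, $\vv(\lambda)=2\log\lambda+\sin\log\lambda$ is smooth, strictly increasing and slowly varying, yet $\lambda\vv''(\lambda)/\vv'(\lambda)$ is unbounded; with $f^{n-1}(x)\propto x^{-3}\vv'(1/x^2)$ one then gets $x^2 f''/f$ unbounded, so (c) fails. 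Generic smoothing does not cure this, because the problem is not regularity of $\vv$ but the \emph{asymptotic} behaviour of its higher derivatives.

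The paper closes exactly this gap with a nontrivial input from regular variation theory: any non-decreasing slowly varying $\vv$ is asymptotic to a function in the de Haan class $\Pi$ (a result of Balkema), and any $\Pi$-function is asymptotic to a smooth one in $S\Pi$ satisfying
\[
\lim_{\lambda\to\infty}\frac{\lambda\,\vv^{(m+1)}(\lambda)}{\vv'(\lambda)}=(-1)^m m!,\qquad m\ge 0,
\]
with $\vv'>0$ eventually. After this replacement the quantities $h_m(x)=\vv^{(m+1)}(1/x^2)/(x^2\vv'(1/x^2))$ have finite limits, and a direct computation gives $f'/f\sim -\tfrac{1}{(n-1)x}$ and $f''/f\sim \tfrac{n}{(n-1)^2 x^2}$, from which (b) and (c) (and even strict convexity) follow. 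Your sketch would be complete once you replace the appeal to ``Karamata representation'' by this de Haan smoothing step.
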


We stress that as a consequence of the construction in the proof of Theorem \ref{t:converse-intro}, the corresponding Laplace-Beltrami operator is essentially self-adjoint, see Remark~\ref{r:ess-adj}.

\medskip

  Without the slow variation assumption on $\vv$, we are unable to prove an exact Weyl law. It is indeed not clear that such a law exists in this general setting. We have however the following weaker version of Theorem~\ref{t:weylexact-intro}. See Theorem \ref{t:weyl}.

 \begin{theorem}\label{t:weyl-intro}
   Let $\M$ be a $n$-dimensional Riemannian manifold with compact metric completion and satisfying Assumption~\ref{a:singularity}. Then, there exist $C_{\pm}>0$ and $\Lambda>0$ such that
   \begin{equation}\label{eq:weylboundsintro}
     C_-\le \frac{N(\lambda)}{\lambda^{n/2}\vv(\lambda)}\le C_+,\qquad \forall \lambda \geq \Lambda.
   \end{equation}
 \end{theorem}

Via a classical argument, we also prove the concentration of eigenfunctions at the metric boundary, in presence of a non-classical Weyl's asymptotics. It applies in particular to all structures satisfying Assumption~\ref{a:singularity}, with compact metric completion, and with infinite volume.  See Theorem \ref{t:localization}.

\begin{theorem}\label{t:localization-intro}
Let $\M$ be an $n$-dimensional Riemannian manifold such that the Laplace-Beltrami operator $\Delta$ has discrete spectrum, and
\begin{equation}\label{eq:noncompact}
\lim_{\lambda \to \infty} \frac{N(\lambda)}{\lambda^{n/2}} = \infty.
\end{equation}
Let $\{\phi_i\}_{i\in \N}$, be a complete set of normalized eigenfunctions of $-\Delta$, associated with eigenvalues $\lambda_i$, arranged in non-decreasing order. Then, there exists a density one subset $\SSS \subseteq \N$ such that for any compact $U$ it holds
\begin{equation}
\lim_{\substack{i \to \infty \\ i \in \SSS}} \int_U |\phi_i|^2d\mu_g = 0.
\end{equation}
\end{theorem}

\subsection{Quantitative remainders for heat trace and counting function}

In order to highlight a key technical result of independent interest, we sketch here the proof of Theorem~\ref{t:weyl-intro}. It consists in the simultaneous exploitation of Dirichlet-Neumann bracketing (classically used in the Euclidean case) and Tauberian techniques (classically used when all intrinsic geometric quantities are bounded).

The idea is to consider the splitting $\M = \M_0^{\varepsilon}\cup\M_\varepsilon^\infty$ in a boundary (singular) part and an inner (regular) one. By Dirichlet-Neumann bracketing, $N(\lambda)$ is controlled by the counting functions for the Laplace-Beltrami operator on the two domains, with Neumann $(+)$ or Dirichlet $(-)$ boundary conditions, respectively:
\begin{equation}\label{eq:idea1}
N^-_{[0,\varepsilon]}(\lambda)+N^-_{[\varepsilon,\infty]}(\lambda) \leq N(\lambda)\leq N^+_{[0,\varepsilon]}(\lambda)+N^+_{[\varepsilon,\infty]}(\lambda).
\end{equation}

Thanks to the convexity assumption, $\M_0^\varepsilon$ supports a Hardy-type inequality. As a consequence, $N^{\pm}_{[0,\varepsilon(\lambda)]}(\lambda) = 0$, provided that $\varepsilon =\varepsilon(\lambda) \to 0$ sufficiently fast (in a quantitative way) as $\lambda \to \infty$. In this regime, the asymptotics of $N(\lambda)$ is controlled by the Weyl function of the truncation $\M_{\varepsilon(\lambda)}^\infty$. The latter is a Riemannian manifold with boundary and finite volume, which satisfies indeed the classical Weyl's law
\begin{equation}\label{eq:idea2}
N_{[\varepsilon(\lambda),\infty]}^{\pm}(\lambda) \sim \frac{\omega_n}{(2\pi)^n} \vol\left(\M_{\varepsilon(\lambda)}^\infty\right) \lambda^{n/2}.
\end{equation}
The implicit remainder in \eqref{eq:idea2}, which depends on the parameter of the truncation $\varepsilon(\lambda)$, must be carefully controlled as $\lambda \to \infty$.  The key is the following heat-trace asymptotic formula with remainder for compact Riemannian manifolds with convex boundary. See Theorem \ref{t:traceheat} and Corollary \ref{c:traceheat}.

\begin{theorem}\label{t:traceheat-intro}
Let $(M,g)$ be a compact $n$-dimensional Riemannian manifold with convex boundary $\partial M$. Let $K\geq 0$ such that $|\Sec(M)|\le K$.
Then there exists a constant $c>0$, depending only on $n$, such that the following estimate for the Dirichlet or Neumann heat kernels $E^{\pm}$ holds:
\begin{equation}
\left|\frac{(4\pi t)^{n/2}}{\vol(M)\phantom{}}\int_M E^{\pm}(t,q,q) d\mu_g(q) - 1\right|  \leq  c \left[ (t/t_0)^{1/2} + (t/t_0)^{n/2}\right] ,
\end{equation}
for all $t \in \R_+$, and where $\sqrt{t_0}:= \min\left\{\inj(M),\frac{\inj_{\partial}(M)}{4}, \frac{\pi}{\sqrt{K}}\right\}$.
\end{theorem}

  \begin{rmk}
    Here, $\distb$ and $\inj_{\partial}(M)$ are the distance and the injectivity radius from $\partial M$, respectively. Furthermore, we used $M$ to denote a compact Riemannian manifold with boundary in order to distinguish it from the non-complete manifold $\M$.
  \end{rmk}

\begin{rmk}
Some assumptions in Theorem \ref{t:traceheat-intro} can be dispensed of if one is interested only in upper or lower bounds, or only to Dirichlet or Neumann boundary conditions. For example, an upper bound for the Dirichlet heat trace does not require convexity and, in this case, the remainder in Theorem \ref{t:traceheat-intro} can be replaced by a lower bound on $\Ric$ and an upper bound on $\Sec$. We adopted an unified approach with non-optimal assumptions to include all cases in a single simpler statement.
\end{rmk}
\begin{rmk}
  The remainder of Theorem \ref{t:traceheat-intro} is fully explicit in terms of the geometry of $M$, as well as intrinsic, in the sense that it does not rely on an embedding of $M$ in a larger manifold. Compare for instance to the similar remainder formula in \cite[Thm.\ 3.5]{Donnelly-Lee-remainders} where the remainder is controlled by the geometry of a manifold $M'$ in which $M$ is embedded. Although we will indeed apply Theorem~\ref{t:traceheat-intro} to compact manifolds that are naturally embedded in a larger one, the latter will have unbounded curvature and injectivity radius and thus \cite[Thm.\ 3.5]{Donnelly-Lee-remainders} does not apply. We also remark that the remainder in \cite[Thm.\ 3.5]{Donnelly-Lee-remainders} contains terms that depend in a non-explicit way on the geometry.
\end{rmk}
As a consequence of Theorem \ref{t:traceheat-intro}, and a suitable Karamata-type theorem with remainder, we obtain an asymptotic formula with universal remainder\footnote{By ``universal remainder'', we mean that it depends on the structure only through a handful of geometrical invariants, and fixed dimensional constants.} for the eigenvalue counting function of the Laplace-Beltrami operator on a compact Riemannian manifold with convex boundary. See Theorem \ref{t:weylremainder}.

\begin{theorem}\label{t:weylremainder-intro}
 Let $(M,g)$ be a compact $n$-dimensional Riemannian manifold with convex boundary $\partial M$. Let $K \geq 0$ such that $|\Sec(M)|\le K$. Then, there exists a constant $C>0$, depending only on $n$, such that the following estimate holds for the counting function for Dirichlet or Neumann eigenvalues:
\begin{equation}
\left\lvert\frac{N(\lambda)}{\tfrac{\omega_n}{(2\pi)^n}\vol(M)\lambda^{n/2}}-1\right\rvert \leq \frac{C}{\log(1+\sqrt{\lambda/\lambda_0})}, \qquad \forall \lambda >0,
\end{equation}
with $\sqrt{\lambda_0} =  \min\left\{\inj(M),\frac{\inj_\partial(M)}{4}, \frac{\pi}{\sqrt{K}}\right\}^{-1}$.
\end{theorem}

When applied to $\M_{\varepsilon(\lambda)}^\infty$, Theorem \ref{t:weylremainder-intro}  singles out the quantities whose explosion must be controlled as $\lambda \to \infty$ and $\varepsilon \to 0$ in \eqref{eq:idea2} concluding the proof of Theorem \ref{t:weyl-intro}. On the other hand, the proof of Theorem \ref{t:weylexact-intro} is much more delicate:  one must consider a three-parts splitting, with an additional intermediate buffer region, whose contribution must be shown to be negligible compared to the one of the inner region.

\begin{rmk}\label{rmk:sharpness}
 An important feature of Theorem \ref{t:weylremainder-intro} is its asymptotic sharpness as $\lambda \to \infty$, in the sense that in the limit it yields the exact classical Weyl's law for compact manifolds with boundary. This feature, key in the proof of Theorem \ref{t:weylexact-intro}, should be seen as one of the main novelty with respect to other similar estimates in the literature (usually found in the equivalent form of estimates for eigenvalues, see \cite[Thm.\ 1.2.6 and Thm.\ 1.3.8]{Hassannezhad}). The price to pay for sharpness is that our formulas rely on stronger curvature bounds.
\end{rmk}
\begin{rmk}\label{rmk:smoothness}
The smoothness hypothesis (a) in Assumption \ref{a:singularity} is technical and we are unsure whether it can be removed or not for Theorem \ref{t:weylexact-intro}: in this case, the convexity hypothesis (b) should be understood in the sense of distributions; the Hardy inequality (Proposition \ref{lem:hardy}) still holds in this case but it is not clear how to extend the validity of a few key technical results, and in particular Proposition \ref{p:intermediatecouche} for the buffer region.
\end{rmk}


%
%
%

\subsection{Structure of the paper}
The first part of the paper, contained in Section~\ref{s:heat}, is devoted to the non-singular case. Here, for Riemannian manifolds with boundary, we prove remainder formulas for the heat trace asymptotics (Theorem \ref{t:traceheat-intro}), and for the Weyl's law (Theorem \ref{t:weylremainder-intro}).

In Section~\ref{s:singular} we present some preliminary results regarding singular Riemannian manifolds satisfying Assumption~\ref{a:singularity} which we exploit, in Section~\ref{s:weyl}, to prove Theorem~\ref{t:weyl-intro}. Section~\ref{s:exact} is dedicated to singular manifolds with slowly varying volumes, and in particular to the proofs of Theorem~\ref{t:weylexact-intro} and Theorem~\ref{t:converse-intro}, while in Section~\ref{sec:concentration} we prove Theorem~\ref{t:localization-intro} on the localization of eigenfunctions.
Finally, in Section \ref{s:ARS}, we apply our results to a class of almost-Riemannian structures, which generalize the Grushin sphere model.

We conclude the paper with two appendices  with technical estimates.

\subsection{Notation}
\label{sec:notation}

To avoid confusion, we employ two different notations for Riemannian manifolds:
\begin{itemize}
  \item $M$ for compact Riemannian manifolds, typically with boundary $\partial M$. This is used in particular throughout Section~\ref{s:heat} and Appendix~\ref{a:auxiliary}.
  \item $\M$ for typically non-complete Riemannian manifolds with compact metric completion, satisfying Assumption~\ref{a:singularity}. This is used in the rest of the paper.
\end{itemize}

\subsection{Other classes of singular structures}\label{s:literature}

There are several types of ``singular structures'' occurring in the literature. To put our contribution in perspective, we provide here a non-exhaustive overview.

\paragraph{Conical singularities.} There is a sharp difference between our class of singularities and conical ones \cite{C-conic}. In the latter case, our techniques do not apply since the boundaries of the truncations $\M_\varepsilon^\infty$ are concave as $\varepsilon \to 0$. However, the spectrum of the Laplace-Beltrami is still discrete, the total volume is finite, and the classical Weyl's law \eqref{eq:wl1} holds. In this sense, conical singularities are more gentle, and do not change the leading order of the counting function. Indeed, they are detected only at higher order, see \cite{S-conic1,S-conic2}.

\paragraph{Conformally cusp singularities.}
The spectral properties of conformally cusp type singularities have been studied in \cite{GoleniaMorianu}. In that reference, the authors derive a nice non-classical Weyl's law for the Laplace-Beltrami operator acting on $k$-forms, under suitable conditions on the topology of the singularity (which in particular exclude the case $k=0$). Note that the class of conformally cusp manifolds studied in \cite{GoleniaMorianu} does not contain our class of singularities: when the singularity is at finite distance (i.e., in the non-complete case) the structure is of metric horn type, cf.\ \cite{Cheeger-horn}. 

\paragraph{Structures with locally bounded geometry.}

In \cite{Lianantonakis2000}, the author considers non-complete Riemannian structures $(\M,g)$ equipped with a weighted measure $\sigma^2 d\mu_g$.
{The Riemannian measure $\mu_g$ and} the weight $\sigma$ might be singular at the metric boundary, and no regularity of the latter is assumed. 
In this setting, the author derives a rough Weyl's law similar to the one of Theorem \ref{t:weyl-intro}. The setting and methods of \cite{Lianantonakis2000} are rather different with respect to ours. The assumptions in \cite{Lianantonakis2000} imply that $\M$ is locally uniformly bi-Lipschitz equivalent to an Euclidean ball. In particular, if the metric completion is compact, their assumptions imply that the Riemannian volume of $\M$ is finite.

\paragraph{ARS with smooth measures.} An analogue to Theorem \ref{t:weylexact-intro} for $2$-dimensional ARS was announced in \cite{Y3} as a consequence of a more general local Weyl's law for sub-Laplacians \cite{Y1,Y2}. There, the authors are concerned with the Friedrichs extension associated with the quadratic form
\begin{equation}\label{eq:yves}
Q(u) = \int_{\KK} | \nabla u |^2 d\omega, \qquad u \in C^\infty(\KK),
\end{equation}
where $\KK$ is a smooth compact manifold carrying a smooth almost-Riemannian structure and the measure $\omega$ is positive and smooth on $\KK$, including on the singular region $S\subset \KK$. The reader not familiar with AR geometry can think at the example of the Grushin sphere discussed above, where $\KK = \mathbb{S}^2$ and the measure $\omega$ is the standard measure of the sphere. It is surprising that, for generic $2$-ARS, we obtain the same Weyl's law in our setting, where $\omega =\mu_g$ is singular on $S$ and the domain of the form \eqref{eq:yves} is $C^\infty_c(\KK\setminus S)$. See also \cite{Sjostrand-II} for a result covering in particular the Grushin sphere with smooth measure.


\subsubsection*{Acknowledgments}
This work was supported by the Grants ANR-15-CE40-0018, ANR-18-CE40-0012, and ANR-20-CE48-0003.
This work has been supported by a public grant overseen by the French National Research Agency (ANR) as part of the Investissement d'avenir program, through the iCODE project funded by the IDEX Paris-Saclay, ANR-11-IDEX-0003-02. L.R.\ has received funding from the European Research Council (ERC) under the European Union’s Horizon 2020 research and innovation programme (grant agreement No. 945655).

\medskip

We wish to thank S.\ Gallot, R.\ Neel, and E.\ Tr\'elat for stimulating discussions, and also Y.\ Colin de Verdière for his helpful suggestions that, in particular, led to Section~\ref{sec:concentration}. We also warmly thank L.\ de Haan for pointing out a technical result due to A.\ A.\ Balkema about de Haan functions contained in \cite[Appendix B]{deHaan}, key to the proof of Theorem \ref{t:weylexact-intro}. We finally thank V. Ivrii for his comments on a preliminary version of this work.


\section{Heat kernel estimates with remainder}\label{s:heat}

In this section, we prove on-diagonal estimates for the heat kernel and its trace on a compact Riemannian manifold with boundary, with explicit remainder control. We recall first some basic definitions valid in the more general non-complete setting.

\subsection{Notation and basic definitions}

For a Riemannian manifold $(M,g)$, possibly non-complete and with boundary, the injectivity radius from $p\in M$ is the supremum of lengths $\ell>0$ such that every geodesic of length smaller than $\ell$ emanating from $p$ is length-minimizing. The injectivity radius of $M$, denoted by $\inj(M)$, is then the infimum of the injectivity radius over $M$.
This definition extends the classical one. Observe that the exponential map $\exp_p : T_p M \to M$ is a diffeomorphism when restricted to any ball of radius smaller than the injectivity radius from $p$ and contained in the domain of the exponential map (geodesics cease to be defined when they hit the boundary or the metric boundary of the manifold).

We denote the Riemannian distance from $\partial M$ by $\distb:M\to [0,+\infty)$, that is
\begin{equation}
  \distb(p) = \inf_{q\in\partial M} d(p,q).
\end{equation}
A length-parametrized geodesic $\gamma:[0,t]\to M$, $\gamma(0)\in \partial M$ is length-minimizing from the boundary if for all $0\leq s<t$ it holds $\distb(\gamma(s))=s$. It follows that $\dot\gamma(0)\perp T_{\gamma(0)}\partial M$ and that $\gamma(0)$ is the only point of ${\partial M}$ realizing $\distb(\gamma(s))$  for $0\leq s<t$.
The injectivity radius from the boundary, denoted by $\inj_{\partial}(M)$, is then defined as $\inj(M)$ considering only length-minimizing geodesics from the boundary.

For a smooth function $f:M\to \R$, we let
\begin{equation}
  \Hess(f)(X,Y)=g(\nabla_X \nabla f, Y),\qquad X,Y\in \Gamma(M),
\end{equation}
where $\nabla$ denotes the covariant derivative. The notation $\Hess(f)\ge c$ (resp.\ $\le c$) for some constant $c\in\R$ is to be understood in the sense of quadratic forms and with respect to the metric $g$.
  The boundary $\partial M$ is convex (resp.\ strictly convex) if its second fundamental form $\Hess(\distb)|_{T \partial M}$  is non-positive (resp.\ negative). Moreover, it is mean convex if $\tr \Hess(\distb)|_{T \partial M} \le 0$.

\subsection{On-diagonal heat kernel estimates}

The Dirichlet and Neumann heat kernels $E^{-}$ and $E^+$ are the minimal fundamental solutions of the heat equation associated with the Laplace-Beltrami operator $\Delta$ with Dirichlet or Neumann boundary conditions  on $\partial M$. We denote the corresponding self-adjoint operators by $\Delta^-$ and $\Delta^+$, respectively.  The first result of the section is the following.  Henceforth, we use the convention that $1/0 = +\infty$.

\begin{theorem}\label{t:heat}
Let $(M,g)$ be a compact $n$-dimensional Riemannian manifold with convex boundary $\partial M$. 
Let $K\ge 0$ be such that $|\Sec(M)|\le K$. Moreover, let
    \begin{equation}\label{eq:def-rho}
      \rho(q) = \min\left\{\frac{\distb(q)}{2},  \inj(M) \right\}, \qquad \forall q\in M.
    \end{equation}
Then there exist constants $c_1,c_2,c_3>0$ depending only on $n$, such that
  \begin{equation}
\left|(4\pi t)^{n/2} E^{\pm}(t,q,q) -1\right| \leq c_1 K t+ c_2 e^{-c_3\tfrac{\rho(q)^2}{4t}},
\end{equation}
for any $q\in M$ and $t\in\R_+$ such that
\begin{equation}
  \sqrt{t} \leq \min\left\{\rho(q),\frac{\pi}{\sqrt{K}}\right\}.
\end{equation}
\end{theorem}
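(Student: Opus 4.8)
The plan is to obtain the estimate by comparing $E^\pm(t,q,q)$ with the heat kernel of a model space and controlling the error via the distance to the boundary. First I would recall the basic domain-monotonicity (Dirichlet) and a parametrix construction near $q$. Fix $q \in M$ and set $r = \rho(q)$. Inside the geodesic ball $B(q,r)$ the exponential map is a diffeomorphism, so we may work in normal coordinates; moreover $B(q,r)$ does not meet $\partial M$ when $r = \distb(q)/2 < \distb(q)$, and the curvature bound $|\Sec|\le K$ together with $\sqrt t \le \pi/\sqrt K$ lets us invoke standard comparison estimates (Rauch/Bishop–Gromov) for the heat kernel $E_{B}$ of the ball $B(q,r)$ with Dirichlet conditions on $\partial B(q,r)$. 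The key classical inputs are: (i) the on-diagonal asymptotic $(4\pi t)^{n/2} E_{\R^n}(t,q,q) = 1$ exactly in Euclidean space, with a bounded-geometry perturbation giving $|(4\pi t)^{n/2} E_B(t,q,q) - 1| \le c_1 K t$ for $\sqrt t \le \min\{r, \pi/\sqrt K\}$ — this is where the sectional curvature bound enters quantitatively, via the Jacobian of $\exp_q$ and a Duhamel/parametrix argument; and (ii) the finite-propagation-speed (or maximum principle) comparison $0 \le E_B(t,q,q) \le E^\pm(t,q,q)$ in the Dirichlet case, and an analogous two-sided bound in the Neumann case.

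Next I would quantify the defect $|E^\pm(t,q,q) - E_B(t,q,q)|$. The point is that replacing $M$ (or its Neumann/Dirichlet realization) by the small ball $B(q,r)$ changes the kernel only by an exponentially small amount at scale $t \ll r^2$: by the standard Gaussian upper bounds for heat kernels on manifolds with $|\Sec|\le K$ and a uniform lower bound on the injectivity radius at scale $\rho(q)$ — which we have by construction of $\rho$ — one gets
\[
|E^\pm(t,q,q) - E_B(t,q,q)| \le c_2 t^{-n/2} e^{-c_3 r^2/(4t)},
\]
using either Kac's principle of not feeling the boundary or a Duhamel formula with the heat kernel cut off outside $B(q,r)$. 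Multiplying by $(4\pi t)^{n/2}$ absorbs the $t^{-n/2}$ (after adjusting $c_3$ slightly, since $\sqrt t \le r$ makes $t^{-n/2}$ at most an exponential in $r^2/t$), yielding the claimed term $c_2 e^{-c_3 \rho(q)^2/(4t)}$. Combining with step one and the triangle inequality gives the theorem. The convexity of $\partial M$ is needed in the Neumann case to ensure the comparison inequalities go the right way near the boundary and to control reflected geodesics; for points with $\distb(q)/2 < \inj(M)$ it also guarantees that nothing pathological happens within distance $r$ of $q$.

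I expect the main obstacle to be making the curvature-dependent error in step (i) genuinely \emph{universal} — i.e., of the clean form $c_1 K t$ with $c_1$ depending only on $n$ — rather than depending on higher derivatives of the curvature, as a naive parametrix expansion would give. The resolution is to avoid the full asymptotic expansion and instead use a Duhamel comparison against the \emph{exact} Euclidean (or constant-curvature $\pm K$) heat kernel: one writes $E_B - E_{\R^n}$ as a time-integral of $(\partial_t - \Delta_B)$ applied to $E_{\R^n}$ pulled back by $\exp_q$, and the only geometry that enters is the difference between the pulled-back Laplacian and the flat one, which by Rauch comparison is controlled pointwise by $K$ times the squared distance — integrating this against the Gaussian over $B(q,\sqrt t)$ produces exactly an $O(Kt)$ bound with a dimensional constant. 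The Neumann case requires the additional care of handling the doubled manifold or a reflected parametrix, where convexity of the boundary is exactly what is needed to keep the comparison one-sided; this is the technically heaviest point, but the structure of the argument is the same.
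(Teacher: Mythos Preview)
Your plan is broadly correct and shares the skeleton of the paper's argument (localize to a ball of radius $\rho(q)$, compare to a model kernel for the $c_1 K t$ term, and control the defect by a Gaussian tail), but the execution differs in two respects worth noting.

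First, the paper does not run a Duhamel/parametrix comparison against the Euclidean kernel. Instead it works from the start on the \emph{double} $\bar M = M\cup_{\partial M} M$ via the reflection formula $E^\pm(t,p,q)=\bar E(t,p,q)\mp \bar E(t,p,q^*)$, which treats Dirichlet and Neumann simultaneously. The lower bound for $\bar E(t,o,o)$ comes from Cheeger--Yau comparison with the space form of curvature $-K$; the upper bound uses the Markov property (Grigor'yan's inequality) to split $\bar E$ into a local Dirichlet kernel on $B_o(\rho)$, bounded by $E_K(t,0)$ via the classical upper comparison, plus a boundary-hitting term controlled by the Li--Yau Gaussian estimate on the closed manifold $\bar M$. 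The clean form $c_1 K t$ then drops out of the uniform estimate $|(4\pi t)^{n/2}E_{\pm K}(t,0)-1|\le C|K|t$ for the model kernels, rather than from a pointwise Duhamel remainder. Your Duhamel route can be made to give the same bound, but the comparison-theorem route avoids having to track curvature derivatives entirely.

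Second, and more substantively, you underestimate the Neumann difficulty. You write that the doubled manifold handles it and that ``convexity keeps the comparison one-sided'', but the double carries only a Lipschitz metric across $\partial M$, so Li--Yau and the curvature comparison theorems do not apply to $\bar E$ directly. The paper resolves this by a Gromov--Hausdorff approximation $\bar g_\tau\to \bar g$ by smooth metrics with $\Ric(\bar g_\tau)\ge -(n-1)K$ (a Perelman-type construction), applies all estimates to $\bar E^\tau$ uniformly in $\tau$, and passes to the limit using heat-kernel convergence under measured Gromov--Hausdorff convergence. Convexity of $\partial M$ is precisely what makes this Ricci-preserving smoothing possible. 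This step is the genuine technical core that your sketch omits; without it, the Gaussian upper bound you invoke for $E^+$ is not available with constants depending only on $n$.
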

\begin{proof}
Consider the double $\bar M = M \cup_{\partial M} M$ of $M$, which is a compact smooth manifold without boundary, endowed with the Lipschitz metric $\bar g$ inherited from $g$. Let $\bar{d}$ and $\bar{\mu}$ denote the corresponding metric and measure on $\bar{M}$. Clearly, $\bar{d}$ and $\bar{\mu}$ coincide with $d$ and $\mu$, when restricted to either isometric copy $M \subset \bar{M}$. Following \cite{MS-curvature}, although the coefficients of the Laplace-Beltrami operator are discontinuous, there is a well-defined heat kernel $\bar E$ on $(\bar{M},\bar{g})$, which satisfies
  \begin{equation}\label{e:hk-double}
    E^\pm(t,p,q) = \bar E(t,p,q) \pm \bar E(t,p,q^*), \qquad \forall p,q \in M,
  \end{equation}
  where $q^*\in \bar M$ denotes the reflection of $q$ with respect to the boundary $\partial M \in \bar{M}$. We decompose the argument in several steps.

\paragraph{Step 1. Gromov-Hausdorff approximation.} For $\tau >0$, there exists a sequence $\bar g_\tau$ of smooth Riemannian metrics on $\bar{M}$ such that
   \begin{itemize}
    \item $(\bar{M},\bar{d}_\tau,\bar{\mu}_\tau) \to (\bar{M},\bar{d},\bar{\mu})$ in the measured Gromov-Hausdorff sense, as $\tau \to 0$;
    \item $\Ric(\bar g_\tau)\ge -K(n-1)$, for all $\tau>0$;
    \item for any compact set $K$ such that $K \cap \partial M = \emptyset$ and for sufficiently small $\tau$, we have $\bar g_\tau|_K = \bar g|_K$;
    \item the distance to $\partial M$ in $\bar M$ with respect to $\bar g_\tau$ coincides with $\distb$, as functions on $\bar{M}$.
      \end{itemize}
The construction of $(\bar g_\tau)_{\tau>0}$ is sketched in \cite[Sec.\ 4]{perelman} for positive Ricci curvature and strictly convex boundary, and can be extended to the case of convex boundary, see \cite[Thm.\ 1.8]{W-extension} and references therein. The measured Gromov-Hausdorff convergence in the sense of Fukaya \cite[Def.\ 0.2]{Fukaya} follows from the fact that $\bar{g}_\tau \to \bar{g}$ uniformly in coordinates.

As a consequence of the measured Gromov-Hausdorff convergence and the Ricci bound, we have that the corresponding heat kernels $\bar{E}^\tau$ satisfy
  \begin{equation}
    \lim_{\tau \downarrow 0}\bar E^\tau(t,p,q) = \bar E(t,p,q), \qquad \forall (t,p,q)\in \R_+\times \bar M\times \bar M,
  \end{equation}
uniformly on $\bar{M} \times \bar{M}$, for any fixed $t$. See \cite[Thm.\ 2.6]{Ding}. 

 We will now prove lower and upper bounds for $\bar E^\tau$ that are uniform with respect to $\tau$. Passing to the limit  and using \eqref{e:hk-double} will then yield the statement. 
  
\paragraph{Step 2. Lower bound.} The lower bound on $\bar E^\tau$ is a consequence of classical comparison theorems for the heat kernel on complete manifolds without boundary and Ricci lower bound, see e.g.\ \cite[Thm.~7, p.~196]{Chavel}. Namely, if we let $E_{-K}(t,r)$ be the heat kernel for the simply connected space form of constant curvature $-K$ we obtain $E_{-K}(t,\bar{d}_\tau(p,q)) \leq \bar{E}^\tau(t,p,q)$ for all $(t,p,q)\in \R_+\times \bar M\times \bar M$ and $\tau >0$. In particular, as $\tau \to 0$, we have
\begin{equation}\label{eq:lowerbound}
E_{-K}(t,\bar{d}(p,q)) \leq \bar{E}(t,p,q), \qquad \forall (t,p,q)\in \R_+\times \bar M\times \bar M.
\end{equation}

\paragraph{Step 3. Upper bound.} In this case, classical comparison theorems are only local. 
Nevertheless, we claim that there exists positive constants $c_1,c_2>0$, such that for any $o\in M$ and $\sqrt{t} <\min\{\rho(o),\pi/\sqrt{K}\}$, where $\rho(o)$ is defined in \eqref{eq:def-rho}, it holds
\begin{equation}\label{eq:upperbound}
  \bar E(t,o,o) \le E_K(t,0)+\frac{c_1}{t^{n/2}}e^{-c_2\frac{\rho^2(o)}{4t}}
  \qquad\text{and}\qquad
  \bar E(t,o,o^*) \le \frac{c_1}{t^{n/2}}e^{-c_2\frac{\rho^2(o)}{4t}}.
\end{equation}
Here, we denoted by $\bar{B}_o(r)$ (resp.\ $\bar{B}_o^\tau(r)$) the open ball with center $o$ and radius $r>0$ with respect to the metric $\bar{d}$ (resp.\ $\bar{d}_\tau$). When the ball is completely contained in one of the two copies of $M$ in $\bar{M}$, we drop the bar since no confusion arises.

Fix $o\in M$, and let $\rho=\rho(o)$. Let $\Omega = B_o(\rho/2)$. By definition of $\rho$, $\Omega=\bar{B}_o^\tau(\rho)$ is contained in one of the two copies of $M \subset \bar{M}$, and does not intersect $\partial M$, taking $\tau$ sufficiently small. Hence, we have $\bar{g}_\tau|_{\Omega} = g|_{\Omega}$, and
\begin{equation}\label{eq:Omegaset}
\Omega=\bar{B}_o^\tau(\rho/2) = \bar{B}_o(\rho/2) = B_o(\rho/2).
\end{equation}
Denote by $\bar{E}_{\Omega}^\tau(t,p,q)$ the heat kernel with respect to $\bar{g}_\tau$ on $\Omega$ with Dirichlet condition, which we set to zero if $p$ or $q\notin \Omega$. The Markov property of the heat kernel implies
\begin{equation}\label{eq:Grig}
  \bar E^\tau(t,o,q) \le \bar E^\tau_{\Omega}(t,o,q) + \sup_{\substack{0<s\le t\\p\in \partial\Omega}} \bar E^\tau(s,p,q).
\end{equation}
This follows, e.g., by applying \cite[Lemma~3.1]{Grigoryan2009} and upper-bounding the hitting probability appearing there by $1$.
We now estimate the two terms appearing on the r.h.s.\ of \eqref{eq:Grig}, which we will refer to as the local and the global term, respectively, for the cases $q =o$ and $q = o^*$.

Let us start by considering the local term. Since $\Omega \subset M$, it follows that $o^* \notin \Omega$, and hence, for $q=o^*$, we have
\begin{equation}\label{eq:local1}
  \bar{E}^{\tau}_{\Omega}(t,o,o^*)=0.
\end{equation}
Let now $q=o$. Since $\rho \leq \inj(M)$ and $\Omega = \bar{B}^\tau_o(\rho/2)$ lies in the region of $M$ where the metric is unperturbed, one has that $\Omega$ lies within the injectivity radius from $o$. Therefore, we can apply the comparison theorem \cite[Thm.\ 6, p.\ 194]{Chavel} and the domain monotonicity property of the Dirichlet heat kernel to obtain
\begin{equation}\label{eq:localtermtau}
\bar{E}^{\tau}_{\Omega}(t,o,o) \leq E_{K}(t,0).
\end{equation}

The global term in \eqref{eq:Grig} is more delicate. 
Observe that the Li-Yau inequality (see Lemma~\ref{l:li-yau}) requires only a lower bound on the Ricci curvature, and hence can be applied to the compact Riemannian manifold with no boundary $(\bar{M},\bar{g}_\tau)$, for which $\Ric(\bar{g}_\tau) \geq -(n-1)K$, for all $\tau>0$. As a consequence, there exist constants $C_1,C_2,C_3>0$, depending only on the dimension $n$ of $\bar{M}$, such that
\begin{equation}\label{eq:LY2}
\bar{E}^\tau(s,p,q) \leq \frac{C_1}{\sqrt{\vol^\tau(\bar{B}^\tau_p(\sqrt{s}))\vol^\tau(\bar{B}^\tau_q(\sqrt{s}))}} e^{C_2 K s - C_3 \frac{\bar{d}_\tau^2(p,q)}{4s}}, \quad \forall (s,p,q) \in \R_+ \times \bar{M} \times \bar{M}.
\end{equation}

Recall that $p \in \partial \bar{B}_o^\tau(\rho/2)$, and $\rho=\rho(o) \leq \distb(o)$.
Therefore $\bar{d}_\tau(p,o) = \rho /2$, and
\begin{equation}
\bar{d}_\tau(p,o^*) \geq \bar{d}_\tau(o,o^*)-\bar{d}_\tau(p,o)= 2\distb(o)-\rho/2\geq 3\rho/2 \geq\rho.
\end{equation}
Hence, \eqref{eq:LY2}, for both cases $q\in \{o,o^*\}$, yields
\begin{equation}\label{eq:LY3}
\bar{E}^\tau(s,p,q) \leq \frac{C_1}{\sqrt{\vol^\tau(\bar{B}^\tau_p(\sqrt{s}))\vol^\tau(\bar{B}^\tau_q(\sqrt{s}))}} e^{C_2 K s - C_3 \frac{\rho^2(o)}{4s}}, \qquad q \in \{o,o^*\}.
\end{equation}

Recall now that in \eqref{eq:Grig} $s \leq t$. Furthermore $\sqrt{t}\leq \rho = \min\big\{\distb(o), \inj(M)\big\}$. It follows that $\bar{B}_q^\tau(\sqrt{s/2})$, for $q \in \{o,o^*\}$, does not intersect $\partial M$, and hence, we can choose $\tau$ sufficiently small so that these sets lie in the region of $\bar{M}$ where the metric is unperturbed, yielding
\begin{equation}\label{eq:vol1}
\vol^\tau(\bar{B}_q(\sqrt{s /2})) = \vol(B_q(\sqrt{s/2})),  \qquad \forall s \leq t, \quad q\in \{o,o^*\}.
\end{equation}
Furthermore, since $\sqrt{t} \leq \inj(M)$, and thanks to the upper bound on the sectional curvature of $(M,g)$, we can bound from below the r.h.s.\ of \eqref{eq:vol1} with the volume of the analogue ball in the simply connected space form of curvature $K$, yielding
\begin{equation}\label{eq:vol2}
\vol^\tau(\bar{B}_q(\sqrt{s /2})) \geq \vol(B_K(\sqrt{s /2})),  \qquad \forall s \leq t, \quad q\in \{o,o^*\}.
\end{equation}
Finally, since $\sqrt{t} \leq \tfrac{\pi}{\sqrt{K}}$, we deduce (see Lemma~\ref{l:deduction}) the existence of a constant $C>0$ depending only on $n$ such that, for $\tau$ sufficiently small, it holds
\begin{equation}\label{eq:vol}
  \vol^\tau(\bar{B}_q^\tau(\sqrt{s}))\ge { \vol(B_K(\sqrt{s /2})) \geq } C s^{n/2},\qquad \forall s\le t, \quad q\in\{o,o^*\}.
\end{equation}
The same argument shows that \eqref{eq:vol} holds also when replacing $q$ with $p \in\partial\Omega$ for $\tau$ small. By plugging \eqref{eq:vol} in \eqref{eq:LY3}, using again that $Ks\le \pi^2$, we deduce that
\begin{equation}\label{eq:LY4}
\bar{E}^\tau(s,p,q) \leq \frac{c_1}{s^{n/2}} e^{- c_2 \frac{\rho^2(o)}{4s}},\qquad \forall s\le t, \quad q\in\{o,o^*\}.
\end{equation}
Up to enlarging the constant $c_1$ (still depending only on $n$), one has
\begin{equation}\label{eq:global}
\sup_{\substack{0<s\le t\\p\in \partial\Omega}} \bar E^\tau(s,p,q) \leq  \frac{c_1}{t^{n/2}} e^{-c_2\tfrac{\rho^2(o)}{4t}}, \qquad  q \in \{o,o^*\},
\end{equation}
which is the the final estimate for the global part of \eqref{eq:Grig}.

By \eqref{eq:local1} (resp.\ \eqref{eq:localtermtau}) and \eqref{eq:global}, passing to the limit as $\tau\to 0$ in \eqref{eq:Grig}, completes the proof of the upper bounds \eqref{eq:upperbound}.

\paragraph{Step 4. Conclusion.}
By \eqref{e:hk-double}, the lower bound \eqref{eq:lowerbound} and the upper bound \eqref{eq:upperbound} for the heat kernel on the double $\bar{M}$ yield the following on-diagonal estimates for the Dirichlet and Neumann heat kernels of the original manifold with boundary:
\begin{equation}
E_{-K}(t,0) - \frac{C_1}{t^{n/2}} e^{-C_3\tfrac{\rho^2(o)}{4t}} \leq E^{\pm}(t,o,o) \leq E_{K}(t,0)+ \frac{2C_1}{t^{n/2}} e^{-C_3\tfrac{\rho^2(o)}{4t}},
\end{equation}
valid for all $0<\sqrt{t} \leq \min\{\rho(o),\tfrac{\pi}{\sqrt{K}}\}$. We conclude by using the uniform estimates of the model kernels $E_{\pm K}(t,0)$ given in  Lemma~\ref{l:pKreste} (which we apply with $T= \pi^2$).
\end{proof}

\subsection{Heat trace bound}

In this section we apply Theorem~\ref{t:heat} to estimate the heat trace on $M$.
\begin{theorem}\label{t:traceheat}
Let $(M,g)$ be a smooth compact $n$-dimensional Riemannian manifold with convex boundary $\partial M$. 
Let $K \geq 0$ such that $|\Sec(M)|\le K$.
Then there exists a constant $c>0$, depending only on $n$, such that the following estimate for the Dirichlet or Neumann heat kernels holds:
\begin{equation}\label{eq:integralestimate}
\left|\frac{(4\pi t)^{n/2}}{\vol(M)\phantom{}}\int_M E^{\pm}(t,q,q) d\mu_g(q) - 1\right|  \leq  c \left(\frac{t}{t_0}\right)^{1/2},
\end{equation}
for all values of $t \in \R_+$ such that
\begin{equation}\label{eq:t0}
\sqrt{t}\leq \sqrt{t_0}= \min\left\{\inj(M),\frac{\inj_\partial(M)}{4}, \frac{\pi}{\sqrt{K}}\right\}.
\end{equation}
\end{theorem}
\begin{proof}
Fix $t$ as in our assumptions. Let $\mathfrak{i}= \min\{\inj(M),\tfrac{\inj_\partial(M)}{4}\}$. We split $M$ into $3$ disjoint components (see Figure~\ref{f:split}):
\begin{align}
\Omega_1  = \left\{\distb < \sqrt{t} \right\}, \qquad \Omega_2  = \left\{ \sqrt{t} \leq \distb < \mathfrak{i} \right\}, \qquad \Omega_3  = \left\{ \mathfrak{i} \leq \distb \right\}.
\end{align}
We estimate the heat trace on these three sets separately.

\begin{figure}
\centering
\includegraphics[width=12cm]{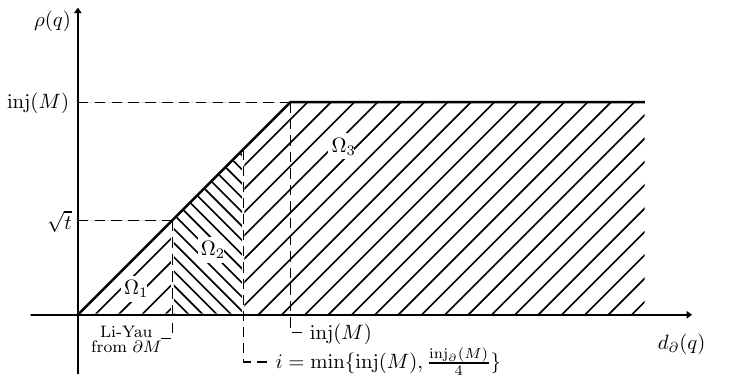}
\caption{The regions $\Omega_1,\Omega_2,\Omega_3$. The condition $\sqrt{t} \leq \inj(M)$ ensures the existence of $\Omega_2$, $\Omega_3$ where we can apply Theorem \ref{t:heat}. The condition $\sqrt{t}\leq \min \left\{\inj(M),\tfrac{\pi}{\sqrt{K}},\tfrac{\inj_\partial(M)}{4}\right\}$ allows one to apply the Li-Yau estimate on $\Omega_1$.}\label{f:split}
\end{figure}

\textbf{Estimate on $\Omega_1$}. By definition, and thanks to our assumption on $t$, we have
\begin{equation}
\distb < \sqrt{t} \leq \min\left\{\inj(M),\frac{\inj_\partial(M)}{4}, \frac{\pi}{\sqrt{K}}\right\}.
\end{equation}
It follows that $\rho(q) = \distb(q)$ for $q\in\Omega_1$, where $\rho$ is defined in \eqref{eq:def-rho}, and $\distb(q) < \inj_\partial(M)/4$.

Observe that, by construction, $\sqrt{t} > \rho(q)$, and one cannot apply the bound of Theorem~\ref{t:heat}. However, the assumption on $t$ allows one to apply the Li-Yau type estimate \eqref{eq:LY-diag} of Lemma~\ref{l:li-yau}. This yields,
\begin{equation}\label{eq:kh-omega1}
 \int_{\Omega_1} |(4\pi t)^{n/2}E^{\pm}(t,q,q) -1|d\mu_g(q) \leq C_4 \vol(\Omega_1).
\end{equation}
In addition, we have
\begin{equation}\label{eq:vol-omega1}
\vol(\Omega_1) =  \int_0^{\sqrt{t}} \vol(Z_x)   d x,
\end{equation}
where $\vol(Z_x)$ denotes the Riemannian volume of the level set $Z_x=\{\distb = x\}$ (a smooth $(n-1)$-dimensional hypersurface {for $x>0$}, with $Z_0 = \partial M$). By Lemma \ref{l:stdcomparisonforH}, on the region $\distb<\inj_\partial(M)/2$ it holds $|\Hess(\distb)|\leq H$ for some $H$ satisfying $1\leq H\sqrt{t}_0 \leq 10$. It holds
\begin{equation}
\frac{d}{d x} \vol(Z_x) = \int_{Z_x} \tr \Hess(\distb)\, d \sigma_x \leq (n-1)H \vol(Z_x), \qquad \forall x< \inj_\partial(M)/2,
\end{equation}
which implies 
\begin{equation}\label{eq:der-z}
\vol(Z_x) \leq \vol(\partial M) e^{(n-1) H x}, \qquad \forall x< \inj_\partial(M)/2.
\end{equation}
Using the fact that $H\sqrt{t} \leq H \sqrt{t_0}\leq 10$, and plugging \eqref{eq:der-z} into \eqref{eq:vol-omega1} we conclude the estimate on $\Omega_1$, which yields together with \eqref{eq:kh-omega1},
\begin{equation}\label{eq:Omega1}
 \int_{\Omega_1} |(4\pi t)^{n/2}E^{\pm}(t,q,q) -1|d\mu_g(q) \leq  c\, \vol(\partial M) \sqrt{t}.
\end{equation}
for some constant $c>0$ depending only on $n$.

\textbf{Estimate on $\Omega_2$}. By construction, $\Omega_2$ still lies in the region within the injectivity radius from $\partial M$. Furthermore, it still holds $\rho(q) = \distb(q)$ for $q\in\Omega_2$. Here, however, $\sqrt{t} \leq  \min\{\rho(q),\tfrac{\pi}{\sqrt{K}}\}$, and hence we can apply the result of Theorem~\ref{t:heat}. Henceforth, $c$ denotes a positive constant depending only the dimension, whose value can change at each step of the computation. We have
\begin{align}
\int_{\Omega_2}\left| (4\pi t)^{n/2} E^{\pm}(t,q,q)-1 \right| d\mu_g(q) & \leq \int_{\Omega_2}\left( c K t  +c e^{-\tfrac{\rho^2(q)}{c t}} \right)d\mu_g(q) \\
& = c K \vol(\Omega_2) t + c \int_{\Omega_2} e^{-\tfrac{\distb(q)^2}{c t}} d\mu_g(q) \\
& = c K \vol(\Omega_2) t + c \int_{2\sqrt{t}}^{\mathfrak{i}}e^{- \tfrac{x^2}{c t}} \vol(Z_x) dx \\
& \leq c K \vol(\Omega_2) t + c \vol(\partial M) \int_{0}^{\infty }e^{-\tfrac{x^2}{c t}+(n-1)H x}  dx \\
& \leq c K \vol(\Omega_2) t + c \vol(\partial M) \sqrt{t}. \label{eq:Omega2}
\end{align}
Here, we used the fact that $\Omega_2$ lies within the region where the estimate \eqref{eq:der-z} holds. Furthermore, we evaluate explicitly the Gaussian integral in the last inequality, and use the fact that $H\sqrt{t}  \leq 10$.

\textbf{Estimate on $\Omega_3$}. For $q\in\Omega_3$, it does not necessarily hold $\rho(q) = \distb(q)$, neither $q$ is necessarily within the injectivity radius from $\partial M$. However, it holds $\rho(q)\ge \mathfrak{i} \geq \sqrt{t}$, and we can apply Theorem~\ref{t:heat} again. Hence, we obtain
\begin{align}
\int_{\Omega_3}\left| (4\pi t)^{n/2} E^{\pm}(t,q,q)-1 \right| d\mu_g(q) & \leq \int_{\Omega_3}\left( c K t  +c e^{-\tfrac{\rho^2(q)}{c t}} \right)d\mu_g(q) \\
& \leq \left(c K  t + c e^{-\tfrac{\mathfrak{i}^2}{c t}}\right) \vol(\Omega_3) \\
& \leq c \left( K  t + \tfrac{t}{\mathfrak{i}^{2}}\right)\vol(\Omega_3). \label{eq:Omega3}
\end{align}
Here, in the last step, we used the inequality $ e^{-1/x} \leq x/e$ for $x>0$.

Since $\vol(\Omega_i)/\vol(M) \leq 1$, splitting the l.h.s.\ of \eqref{eq:integralestimate} in the subsets $\Omega_1,\Omega_2,\Omega_3$, using \eqref{eq:Omega1}, \eqref{eq:Omega2}, \eqref{eq:Omega3}, and increasing the constants, yields
\begin{align}
\left|\frac{(4\pi t)^{n/2}}{\vol(M)\phantom{}}\int_M E^{\pm}(t,q,q) d\mu_g(q) - 1\right| & \leq c\left( \frac{\vol(\partial M)}{\vol(M)} \sqrt{t} +K t + \frac{t}{\mathfrak{i}^2}\right) \\
& \leq c\left( \frac{\vol(\partial M)}{\vol(M)} \sqrt{t} + \frac{t}{t_0}\right), \label{eq:stimaintermedia}
\end{align}
where we used the definition of $t_0$. It remains to estimate the ratio $\vol(\partial M)/\vol(M)$ in \eqref{eq:stimaintermedia}. Proceeding as in \eqref{eq:der-z}, but using this time the lower bound on the Hessian, we obtain the corresponding lower bound
\begin{equation}
\vol(Z_x) \geq \vol(\partial M) e^{-H(n-1)x}, \qquad \forall x  < \inj_\partial(M)/2.
\end{equation}
Therefore, since $t_0 \leq \mathfrak{i}^2$ and $H \sqrt{t_0} \leq 10$, we have
\begin{equation}
\frac{\vol(M)}{\vol(\partial M)} \geq \int_0^{\sqrt{t_0}/(n-1)} \frac{\vol(Z_x)}{\vol(\partial M)} dx  \geq \frac{ 1- e^{-H \sqrt{t_0}}}{H(n-1)} \geq \frac{ \sqrt{t_0}(1-e^{-10}) }{10(n-1)} .
\end{equation}
By plugging this estimate in \eqref{eq:stimaintermedia} and since $t \leq t_0$, one gets the result.
\end{proof}


The next corollary is a version of Theorem~\ref{t:traceheat}, global with respect to $t \in \R_+$.
\begin{cor}\label{c:traceheat}
In the setting of Theorem \ref{t:traceheat}, there exists a constant $c>0$, depending only on the dimension $n$, such that
\begin{equation}\label{eq:integralestimate2}
\left|\frac{(4\pi t)^{n/2}}{\vol(M)\phantom{}}\int_M E^{\pm}(t,q,q) d\mu_g(q) - 1\right|  \leq  c \left[ (t/t_0)^{1/2} + (t/t_0)^{n/2}\right] ,
\end{equation}
where $\sqrt{t_0}= \min\left\{\inj(M),\frac{\inj_\partial(M)}{4}, \frac{\pi}{\sqrt{K}}\right\}$.
\end{cor}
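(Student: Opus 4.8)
The plan is to bootstrap the local estimate of Theorem~\ref{t:traceheat}, valid for $t\le t_0$, to all positive $t$ by exploiting monotonicity in $t$ of the heat trace together with the spectral-theoretic lower bound coming from the bottom eigenvalue. First I would observe that for $t\le t_0$ the claimed bound $(t/t_0)^{1/2}$ is \emph{exactly} \eqref{eq:integralestimate}, so there is nothing to prove in that regime; the entire content is the range $t\ge t_0$. Write $\Theta^\pm(t) := \int_M E^\pm(t,q,q)\,d\mu_g(q) = \tr e^{t\Delta^\pm}$ for the heat trace, so that the quantity to be estimated is $|(4\pi t)^{n/2}\Theta^\pm(t)/\vol(M) - 1|$.

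For the \emph{upper} side, since $\Theta^\pm(t)\le \Theta^\pm(t_0)$ for $t\ge t_0$ in the Dirichlet case (eigenvalues of $-\Delta^+$ are nonnegative, so $e^{-t\lambda_j}$ is nonincreasing), and the Neumann case has the zero eigenvalue contributing exactly $1$ with all other modes decaying, one gets $\Theta^\pm(t) \le \Theta^\pm(t_0) \le \vol(M)(4\pi t_0)^{-n/2}(1 + c)$ by \eqref{eq:integralestimate} at $t=t_0$. Multiplying by $(4\pi t)^{n/2}$ yields $(4\pi t)^{n/2}\Theta^\pm(t)/\vol(M) \le (1+c)(t/t_0)^{n/2}$, whence $(4\pi t)^{n/2}\Theta^\pm(t)/\vol(M) - 1 \le (1+c)(t/t_0)^{n/2}$, and since $(t/t_0)^{n/2}\ge 1$ the $-1$ only helps; absorbing constants gives the upper half of \eqref{eq:integralestimate2}. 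For the \emph{lower} side one needs $(4\pi t)^{n/2}\Theta^\pm(t)/\vol(M) - 1 \ge -c(t/t_0)^{n/2}$, i.e.\ a lower bound on $\Theta^\pm(t)$. In the Neumann case this is immediate: the constant function is an eigenfunction with eigenvalue $0$, so $\Theta^-(t)\ge 1$, hence $(4\pi t)^{n/2}\Theta^-(t)/\vol(M) - 1 \ge (4\pi t)^{n/2}/\vol(M) - 1 \ge -1 \ge -(t/t_0)^{n/2}$. In the Dirichlet case $\Theta^+(t)$ can be exponentially small, but then $(4\pi t)^{n/2}\Theta^+(t)/\vol(M) - 1 \ge -1$, and again $-1 \ge -(t/t_0)^{n/2}$ since $t\ge t_0$; so the lower bound holds trivially with constant $1$.

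Assembling the two sides and taking $c$ to be the maximum of the constants produced gives \eqref{eq:integralestimate2} for $t\ge t_0$, and combined with Theorem~\ref{t:traceheat} for $t\le t_0$ this is the full statement. I would phrase the write-up as: ``For $t\le t_0$ the estimate is Theorem~\ref{t:traceheat}. For $t\ge t_0$, note $(t/t_0)^{n/2}\ge 1$; the lower bound $(4\pi t)^{n/2}\Theta^\pm(t)/\vol(M)\ge 0$ gives the estimate from below with constant $1$, while monotonicity of $\Theta^\pm$ in $t$ together with \eqref{eq:integralestimate} evaluated at $t_0$ gives it from above.''

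I do not expect a serious obstacle here; the only mild subtlety is making sure the monotonicity statement is applied to the right object — one uses that $t\mapsto \Theta^\pm(t)$ is nonincreasing (true since all eigenvalues of $-\Delta^\pm$ are $\ge 0$), not that the normalized quantity is monotone, which it is not. A secondary point of care is that the corollary as stated does not claim any two-sided control better than the trivial $[-1,\infty)$ bound in the large-$t$ regime, so one should resist the temptation to prove something sharper; the content is genuinely just packaging Theorem~\ref{t:traceheat} with elementary positivity and monotonicity.
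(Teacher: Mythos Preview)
Your proof is correct and follows essentially the same approach as the paper: for $t\le t_0$ invoke Theorem~\ref{t:traceheat}, and for $t\ge t_0$ use that the heat trace $\Theta^\pm(t)$ is nonincreasing and nonnegative together with the estimate at $t=t_0$. The paper packages the upper and lower bounds into a single chain of inequalities via the triangle inequality applied to $W(t)-t^{-n/2}$ (with $W(t)=(4\pi)^{n/2}\Theta^\pm(t)/\vol(M)$), but the content is identical to your explicit split into upper and lower halves.
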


\begin{proof}
In the r.h.s.\ of \eqref{eq:integralestimate2}, the first and second term control the remainder for small and large times, respectively. If $t\leq t_0$ the result follows from Theorem \ref{t:traceheat}. If $t \geq t_0$, set $W(t) = \tfrac{(4\pi)^{n/2}}{\vol(M)}\int_M E^\pm(t,q,q) d\mu_g(q)$. Since $W(t)$ is decreasing and positive, we have
\begin{align}
\left\lvert W(t)-t^{-n/2}\right\rvert & \leq  W(t_0) + t_0^{-n/2} \\
& \leq \left| W(t_0)-t_0^{-n/2} \right| + 2 t_0^{-n/2} \\
& \leq t_0^{-n/2}|t_0^{n/2}W(t_0)-1| + 2 t_0^{-n/2} \\
& \leq t_0^{-n/2}(c + 2),
\end{align}
where we used Theorem \ref{t:traceheat} at $t=t_0$.
\end{proof}


\subsection{Weyl's law with remainder}

When $M$ is compact,  the spectrum of $-\Delta^\pm_\Omega$ is a discrete subset of the positive real axis, i.e., $\sigma(-\Delta^\pm)\subset [0,+\infty)$, accumulating at infinity.  The eigenvalue counting function is
\begin{equation}
  N^{\pm}(\lambda):=\#\{\sigma(-\Delta^\pm)\cap [0,\lambda]\}.
\end{equation}

It is well-known that heat trace asymptotics imply asymptotics for $N(\lambda)$, by means of Tauberian theorems in the form of Karamata \cite{karamata}. We need here a Karamata type result with remainder, due to Freud. See \cite[Thm.\ B]{Tauberian2} or \cite[Thm.\ 3.1]{Korevaarbook} and references within. Since for our purposes we need to know the explicit dependence of the constants with respect to all parameters and functions at play, the statement below is slightly more precise than the one in \cite{Tauberian2}. However, the proof is unchanged and we omit it.

\begin{theorem}[Freud's Tauberian Theorem \cite{Tauberian2}]\label{t:ingham}
Let $\mu : [0,\infty) \to \R$ be a positive and non-decreasing function. Denote by the same symbol the associated Stieltjes measure. Let $\alpha>-1$, and let $\chi : [0,+\infty)\to \R$ be a function such that
\begin{itemize}
\item $\chi(\lambda)>0$ for all $\lambda >0$;
\item $\lambda \mapsto \chi(\lambda)$ is increasing and tends to $\infty$ as $\lambda \to +\infty$;
\item $\lambda \mapsto \lambda^{-\alpha-1}\chi(\lambda)$ is decreasing.
\end{itemize}
Let $\hat{\mu}(t) = \int_0^\infty e^{-t\lambda} d\mu(\lambda)$ denote the Laplace transform of $\mu$. Suppose that there exists $c>0$ such that
\begin{equation}
\left\lvert t^\alpha \hat\mu(t)-  1 \right\rvert \leq \frac{c}{\chi(1/t)}, \qquad \forall t>0.
\end{equation}
Then there exists another constant $C=C(c,\alpha)>0$ such that
\begin{equation}
\left\lvert \frac{\Gamma(\alpha+1) \mu(\lambda)}{\lambda^\alpha} - 1\right\rvert \leq \frac{C}{\log(\chi(\lambda)+1)} , \qquad \forall \lambda>0.
\end{equation}
\end{theorem}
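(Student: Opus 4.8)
The plan is to follow the classical route of Karamata–Freud, replacing the qualitative Karamata argument by a quantitative one. First I would recall that the hypothesis $|t^\alpha\hat\mu(t)-1|\le c/\chi(1/t)$ says precisely that, for every fixed ``test function'' $\phi$, the quantity $t^\alpha\int_0^\infty e^{-t\lambda}\phi(t\lambda)\,d\mu(\lambda)$ is close to $\frac{1}{\Gamma(\alpha+1)}\int_0^\infty x^\alpha e^{-x}\phi(x)\,dx$ whenever $\phi$ can be uniformly approximated by linear combinations of the exponentials $x\mapsto e^{-kx}$, $k\in\N$. Indeed, applying the hypothesis at the rescaled times $kt$ for $k=1,2,\dots$ and taking finite linear combinations $\sum a_k e^{-k t\lambda}$ controls $t^\alpha\int_0^\infty e^{-t\lambda}\,p(e^{-t\lambda})\,d\mu(\lambda)$ for every polynomial $p$, with an error governed by $\sum|a_k|\,/\,\chi(1/(kt))$; since $\chi$ is increasing this is at most $(\sum|a_k|)/\chi(1/t)$ up to adjusting which is the smallest frequency.

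The second and central step is the quantitative approximation: I want to sandwich the indicator $\idty_{[0,1]}(x e^{x})$ — equivalently, after the substitution $x = t\lambda$, the indicator of $\{\lambda \le 1/t\}$ — between two polynomials in $e^{-x}$ times $e^{-x}$, with $L^1(x^\alpha e^{-x}dx)$-error as small as possible but polynomial coefficients not too large. This is where the $\log$ appears: one cannot approximate a discontinuous indicator by such polynomials with error better than some negative power, but one can achieve error $\sim 1/\log(\text{degree})$ if one also allows the coefficient sum to grow polynomially in the degree, and then optimize the degree against the quantity $\chi(\lambda)$. Concretely: choose the polynomial degree $m = m(\lambda)$ so that the ``$\chi$-error'' term $\asymp (\text{coeff sum})/\chi(\lambda)$ and the ``approximation error'' term $\asymp 1/\log m$ balance; this forces $\log m \asymp \log\chi(\lambda)$, hence both errors become $\asymp 1/\log(\chi(\lambda)+1)$. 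Combining the upper and lower polynomial sandwiches gives
\[
\left|\frac{\Gamma(\alpha+1)\,\mu(1/t)}{(1/t)^\alpha}-1\right|\le \frac{C}{\log(\chi(1/t)+1)},
\]
and renaming $\lambda = 1/t$ yields the claim. The monotonicity hypotheses on $\chi$ ($\chi\nearrow$, $\lambda^{-\alpha-1}\chi\searrow$) are exactly what is needed to make the balancing uniform in $\lambda$ and to convert local estimates at one scale into the stated global estimate.

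The main obstacle is the quantitative polynomial approximation of the indicator with simultaneous control of degree and coefficient size — this is the heart of Freud's refinement of Karamata and is a genuine piece of approximation theory (essentially a Jackson–Bernstein-type estimate for the specific weight $x^\alpha e^{-x}$ on $(0,\infty)$, transported from the unit interval via $u = e^{-x}$). Since the paper explicitly states that ``the proof is unchanged'' from \cite{Tauberian2}, in the write-up I would not reprove this approximation lemma but cite it, and simply verify that tracking constants through the argument gives the asserted dependence $C = C(c,\alpha)$ and no dependence on $\mu$ or on $\chi$ beyond its monotonicity; the only real work is bookkeeping to confirm that the two error contributions indeed combine into a single $1/\log(\chi(\lambda)+1)$ with a constant depending only on $c$ and $\alpha$.
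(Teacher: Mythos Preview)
The paper does not give a proof of this theorem at all: it states explicitly that ``the proof is unchanged and we omit it,'' referring the reader to \cite{Tauberian2}. Your proposal is a faithful sketch of the classical Freud--Karamata argument (quantitative polynomial approximation of the indicator in the variable $e^{-x}$, balancing approximation error against coefficient growth to produce the $1/\log$ remainder), which is precisely what one finds in the cited reference; you also correctly anticipate that in the write-up one should simply cite the approximation lemma rather than reprove it. So your approach is consistent with the paper's treatment, and there is no divergence to discuss.
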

\begin{rmk}
Theorem \ref{t:ingham} in particular recovers the classical statement of Karamata (cf. \cite{regularvariation}): if $\hat{\mu}(t) \sim t^{-\alpha}$ as $t \to 0$, then $\mu(\lambda) \sim \lambda^{\alpha}/\Gamma(\alpha +1)$ as $\lambda \to \infty$.
\end{rmk}

We now use Corollary \ref{c:traceheat} to derive the Weyl's law with remainder for $M$.
\begin{theorem}\label{t:weylremainder}
 Let $(M,g)$ be a compact $n$-dimensional Riemannian manifold with convex boundary $\partial M$. Let $K\geq 0$ such that $|\Sec(M)|\le K$. Then, there exists a constant $C>0$, depending only on $n$, such that the following estimate holds for the counting function for Dirichlet or Neumann eigenvalues:
\begin{equation}
\left\lvert\frac{N(\lambda)}{\tfrac{\omega_n}{(2\pi)^n}\vol(M)\lambda^{n/2}}-1\right\rvert \leq \frac{C}{\log(1+\sqrt{\lambda/\lambda_0})}, \qquad \forall \lambda >0.
\end{equation}
with
\begin{equation}\label{eq:lambda0}
\sqrt{\lambda_0} = \frac{1}{ \min\left\{\inj(M),\frac{\inj_\partial(M)}{4}, \frac{\pi}{\sqrt{K}}\right\}}.
\end{equation}
\end{theorem}
\begin{proof}
The proof is an application of Theorem \ref{t:ingham}. Let
\begin{equation}
\mu(\lambda) = \frac{(4\pi)^{n/2}}{\vol(M)} N(\lambda).
\end{equation}
The Laplace transform of the corresponding measure satisfies
\begin{equation}
\hat{\mu}(t)=  \frac{(4\pi)^{n/2}}{\vol(M)} \int_0^\infty e^{-t\lambda} d N(\lambda) = \frac{(4\pi)^{n/2}}{\vol(M)} \sum_{i=1}^\infty e^{-t \lambda_i} = \frac{(4\pi)^{n/2}}{\vol(M)} \int_M E^{\pm}(t,q,q) d\mu_g.
\end{equation}
By Corollary~\ref{c:traceheat}, $\hat{\mu}$ satisfies the assumptions in Theorem~\ref{t:ingham} with $\alpha = n/2$, and $\chi(\lambda) = (\lambda/\lambda_0)^{1/2} + (\lambda/\lambda_0)^{n/2}$. We conclude by recalling that $\Gamma(\tfrac{n}{2}+1) = \tfrac{\pi^{n/2}}{\omega_n}$.
\end{proof}

%
%
%


\section{Geometric structure at the singularity}\label{s:singular}

In this section we collect some preliminary results on non-complete Riemannian manifolds $\M$ satisfying Assumption \ref{a:singularity}. The distance from the metric boundary is denoted by $\distmb$. It is a Lipschitz function, and it satisfies the eikonal equation $|\nabla\distmb|\equiv 1$. For any $0\le a<b\le +\infty$, we let $\M_a^b=\{a\leq \distmb \leq b\}\cap \M$. The following lemma collects some basic properties of $\M$.
 
\begin{lemma}\label{l:basic}
  Let $\M$ be a Riemannian manifold with compact metric completion and satisfying Assumption \ref{a:singularity}. Then there exists $C>0$ such that, for all $0<\varepsilon<\varepsilon_0/2$, the compact manifold with convex boundary $\M_\varepsilon^\infty$ satisfies the following bounds
   \begin{equation}
      \inj_{\partial}(\M_\varepsilon^\infty)\ge \frac{\varepsilon_0}{2}, \qquad \inj(\M_\varepsilon^\infty) \geq \frac{\varepsilon}{C}, \qquad
    |\Sec(\M_\varepsilon^\infty)| \leq \frac{C}{\varepsilon^2},
    \end{equation}
\end{lemma}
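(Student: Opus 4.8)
The plan is to work entirely inside the neighborhood $U \simeq (0,\varepsilon_0)\times Z$ where the metric takes the normal form $g = dx^2 + h(x)$ with $\delta(x,z) = x$, and to translate each of the four assumptions (a)--(d) of Assumption~\ref{a:singularity} into the corresponding quantitative bound for $\M_\varepsilon^\infty = \{\delta \geq \varepsilon\}$. I will treat the piece of $\M_\varepsilon^\infty$ that lies inside $U$ (i.e.\ $\{\varepsilon \leq \delta < \varepsilon_0\}$) separately from the ``core'' $\M_{\varepsilon_0}^\infty$, which is a fixed compact manifold with boundary not depending on $\varepsilon$; on the core all four quantities are bounded by a single constant by compactness, so the only real content is the behavior near the singularity, where the bounds must degenerate at the stated rates.

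First, the curvature bound $|\Sec(\M_\varepsilon^\infty)| \leq C/\varepsilon^2$ is almost immediate: on $\M_\varepsilon^\infty \cap U$ we have $\delta \geq \varepsilon$, so assumption (c) gives $|\Sec| \leq C\delta^{-2} \leq C\varepsilon^{-2}$, and on the core the curvature is bounded by a constant $\leq C/\varepsilon_0^2 \leq (C/\varepsilon_0^2)\cdot(\varepsilon_0/\varepsilon)^2$ for $\varepsilon < \varepsilon_0$; combining and relabeling $C$ gives the claim. Second, the Hessian bound $|\Hess(\distb)| \leq C/\varepsilon$, where now $\distb$ is the distance from $\partial \M_\varepsilon^\infty = \{\delta = \varepsilon\}$: note that near that boundary $\distb = \delta - \varepsilon$, so $\Hess(\distb) = \Hess(\delta)$, which by assumption (b) is $\leq 0$; for the lower bound one uses the standard Riccati/Jacobi comparison along the normal geodesics to $\{\delta = \varepsilon\}$ together with the curvature bound $|\Sec| \leq C/\varepsilon^2$ and the convexity (so the second fundamental form of $\{\delta=\varepsilon\}$ is $\leq 0$) — this is exactly a Hessian comparison yielding $|\Hess(\distb)| \leq \sqrt{C}/\varepsilon \cdot \coth(\cdot)$ type bounds, which is $O(1/\varepsilon)$ as long as $\distb \lesssim \varepsilon$, i.e.\ within the relevant injectivity-radius-from-boundary scale; I expect to invoke one of the appendix lemmas (the geometric estimates in Appendix~A referenced in the introduction) for this.

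Third, the injectivity radius from the boundary: $\inj_\partial(\M_\varepsilon^\infty) \geq \varepsilon_0/C$. The key point is that the boundary $\{\delta = \varepsilon\}$ is convex (assumption (b)), so normal geodesics shot inward from it do not focus prematurely due to boundary bending, and they remain minimizing from the boundary at least until $\delta$ reaches $\varepsilon_0$ (because in $U$ the function $\delta$ itself is the distance to $\{\delta=\varepsilon\}$ up to the additive constant $\varepsilon$, by the product form $g = dx^2 + h(x)$); the curvature blow-up is irrelevant here since the obstruction to $\inj_\partial$ is focusing/cut phenomena measured relative to the boundary, and on the region $\varepsilon \leq \delta \leq \varepsilon_0$ the normal exponential map from $\{\delta = \varepsilon\}$ is a diffeomorphism onto its image by the normal-form structure. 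Thus $\inj_\partial(\M_\varepsilon^\infty) \geq \min\{\varepsilon_0 - \varepsilon, \text{(core contribution)}\} \geq \varepsilon_0/C$ after shrinking; I will also need to use assumption (d), $\inj \geq C\delta$, to rule out short geodesic loops hitting the interior. Fourth, the full injectivity radius $\inj(\M_\varepsilon^\infty) \geq \varepsilon/C$ follows directly from assumption (d): at a point $q$ with $\delta(q) \geq \varepsilon$ we have $\inj(q) \geq C\delta(q) \geq C\varepsilon$ in the ambient $\M$, and passing to the manifold-with-boundary $\M_\varepsilon^\infty$ can only be an issue if a minimizing geodesic would exit through the boundary, but the boundary is convex so this does not create new short geodesics — again a point where convexity is essential and where I may cite the appendix.

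The main obstacle I anticipate is the injectivity-radius-from-boundary estimate, specifically making rigorous that convexity of $\{\delta = \varepsilon\}$ plus the product normal form genuinely prevents the cut locus from the boundary from coming close to it on a scale that shrinks with $\varepsilon$ — one has to be careful that the curvature blowing up like $\varepsilon^{-2}$ does not sneak in a focal point at distance $O(\varepsilon)$ from the boundary; the resolution is that a focal point along a normal geodesic to a convex hypersurface occurs no earlier than in the model with the same curvature bound and totally geodesic boundary, i.e.\ at distance $\geq \tfrac{\pi}{2}\cdot\tfrac{\varepsilon}{\sqrt{C}}$ from curvature alone, but combined with the global product structure on $\varepsilon \leq \delta \leq \varepsilon_0$ there is in fact no focal or cut point before $\delta$ exits $U$, which is the $\varepsilon$-independent lower bound we want. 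I would isolate this as a lemma (or cite Proposition~\ref{prop:inj-strict-convex} and the appendix) and keep the main proof a clean four-part case analysis, core versus collar, with relabeling of $C$ at the end to absorb the finitely many constants.
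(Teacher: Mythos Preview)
Your approach is essentially the paper's: use the normal form $g=dx^2+h(x)$ on the collar, read off the curvature and injectivity-radius bounds directly from assumptions (c) and (d), get $\inj_\partial$ from the product structure, and get the Hessian bound from the Riccati equation. Two comments, though.

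First, you are overthinking $\inj_\partial$. In the normal form the inward normal exponential map from $\{\delta=\varepsilon\}$ is literally $(t,z)\mapsto(\varepsilon+t,z)$, a diffeomorphism onto $(\varepsilon,\varepsilon_0)\times Z$, so $\distb(x,z)=x-\varepsilon$ there and $\inj_\partial(\M_\varepsilon^\infty)\geq \varepsilon_0-\varepsilon\geq \varepsilon_0/2$; no focal-point analysis, no curvature, no assumption (d) is needed. The paper dispatches this in one line.

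Second, your Hessian argument as written is slightly circular. A Riccati comparison launched from $\{\delta=\varepsilon\}$ with only the input ``second fundamental form $\leq 0$'' and the \emph{uniform} bound $|\Sec|\leq C/\varepsilon^2$ cannot by itself give a lower bound on $\Hess(\distb)$ \emph{at} $\distb=0$, which is exactly the second fundamental form of $\{\delta=\varepsilon\}$; and your $\coth$ expression blows up there. The clean fix (which is what the paper does) is to note $\Hess(\distb)=\Hess(\delta)$ on the collar and bound $|\Hess(\delta)|\leq C'/\delta$ on all of $U$ via the Riccati equation for $\Hess(\delta)$ together with the \emph{pointwise} bound $|\Sec|\leq C/\delta^2$ (a Riccati blow-up argument, or simply cite the Hessian comparison in \cite[Prop.~7,~(2)]{Petersen2006}). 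Then $|\Hess(\distb)|\leq C'/\delta\leq C'/\varepsilon$ holds on all of $\M_\varepsilon^\infty\cap U$, not only for $\distb\lesssim\varepsilon$ as you wrote---and the bound on the full collar is what is actually used downstream in Theorem~\ref{t:traceheat}.
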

\begin{proof}
On $U = \{\delta <\varepsilon_0\}$, the distance from the metric boundary $\distmb$ is smooth. Let $\distb^\epsilon$ the distance on $\M_\varepsilon^\infty$ from $\partial\M_\varepsilon^\infty$. It holds $\distb^\epsilon = \distmb -\epsilon$. For $\epsilon< \epsilon_0/2$, we immediately obtain the bound on $\inj_{\partial}(\M_\varepsilon^\infty)$, and the convexity of $\partial\M_\varepsilon^\infty$. The bounds on $\inj(\M_\varepsilon^\infty)$ and on $\Sec(\M_\varepsilon^\infty)$ follow from the corresponding ones in Assumption~\ref{a:singularity}.
\end{proof}

\begin{prop}\label{prop:inj-strict-convex}
Let $\M$ be a Riemannian manifold with compact metric completion and satisfying Assumption \ref{a:singularity}. If the convexity condition (b) is assumed to be strict, or the metric \eqref{eq:metricnormalform} is of warped product type on a neighborhood of the singularity, then the injectivity radius condition (d) is automatically verified.
\end{prop}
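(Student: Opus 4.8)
\textbf{Proof plan for Proposition~\ref{prop:inj-strict-convex}.}

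The goal is to show that assumptions (a), (b'), (c) (with (b') the strict version of (b)), or (a), (b), (c) in the warped product case, force a linear lower bound $\inj(\M) \geq C\distmb$ near the singularity. Since the sectional curvature is controlled by $C\distmb^{-2}$, the usual Klingenberg-type obstruction to a large injectivity radius is the existence of short geodesic loops; conjugate points are already ruled out at the right scale by the curvature bound (a geodesic starting at distance $x$ from the singularity stays, by a Rauch-type argument on a small ball, free of conjugate points up to length comparable to $x$). So the plan is to rule out short geodesic loops based at a point $p=(x,z)\in U$, of length $\lesssim x$.

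First I would set up coordinates: write $g = dx^2 + h(x)$ on $U \simeq (0,\varepsilon_0)\times Z$, so $\distmb(x,z)=x$, and recall from Lemma~\ref{l:basic} that on $\M_\varepsilon^\infty$ the curvature and Hessian bounds hold with constants $C/\varepsilon^2$ and $C/\varepsilon$. The convexity hypothesis (b) says $x \mapsto h(x)(V,V)$ is non-increasing for every $V\in TZ$; strict convexity (b') makes it strictly decreasing, with a quantitative rate controlled by the Hessian bound, hence by $C/x$. The key geometric consequence I want is a \emph{monotonicity/barrier} statement for geodesics: because the level sets $\{ \distmb = x\}$ are (strictly) convex, a geodesic $\gamma$ that at some time is tangent to a level set must immediately move to strictly larger values of $\distmb$ (this is exactly $\frac{d^2}{ds^2}\distmb(\gamma(s)) = -\Hess(\distmb)(\dot\gamma,\dot\gamma) \geq 0$, with strict inequality in the tangent directions under (b')). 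Thus $s \mapsto \distmb(\gamma(s))$ is a convex function along any geodesic staying in $U$; in particular it has no interior strict local maximum, so a geodesic cannot "dip toward" and come back from the singularity.

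Now suppose, for contradiction, that there is a geodesic loop $\gamma:[0,\ell]\to\M$ based at $p$, with $\ell$ small compared to $x=\distmb(p)$. By the convexity of $s\mapsto\distmb(\gamma(s))$ just established, the function attains its maximum at the endpoints $s=0,\ell$, i.e. $\distmb(\gamma(s)) \leq x$ for all $s$, and since it is convex with equal endpoint values it is either constant or strictly below $x$ in the interior; moreover its one-sided derivatives at the endpoints satisfy $\frac{d}{ds}\big|_{0^+}\distmb(\gamma)\leq 0$ and $\frac{d}{ds}\big|_{\ell^-}\distmb(\gamma)\geq 0$. In the warped product case one can be fully explicit: if $h(x) = f(x)^2 h_0$ with $f$ non-increasing (convexity), then the $x$-component of a unit-speed geodesic satisfies $\ddot x = f(x)f'(x)\,|\dot z|_{h_0}^2 \geq 0$ unless $\dot z \equiv 0$, so the only geodesics with $\dot x$ changing sign are "vertical" lines $z=\mathrm{const}$, which are manifestly not loops; and a genuinely "horizontal" excursion at near-constant $x$ lives, up to bounded distortion on the scale $x$, in the fixed Riemannian manifold $(Z,f(x)^2h_0)$, whose injectivity radius at scale $x$ is bounded below by a constant times $x$ by the curvature bound $|\Sec|\leq C/x^2$ together with Cheeger's lemma / Klingenberg's estimate applied on that slice. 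The strict-convexity case is handled by the same dichotomy but quantified: strict convexity gives a definite lower bound on $\ddot{\distmb}(\gamma(s))$ in horizontal directions, forcing any geodesic that is ever tangent to a level set to escape the region $\{\distmb \leq x\}$ within time $\lesssim x$, which contradicts it being a loop based at $p$ and staying in $\{\distmb\leq x\}$ for the full time $\ell$ when $\ell$ is small. Either way, no geodesic loop of length $\lesssim x$ through $p$ exists, and combined with the conjugate-point bound from (c) this yields $\inj(\M) \geq C\distmb$, i.e. condition (d).

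\textbf{Main obstacle.} The delicate point is making the "no short geodesic loop" argument quantitative and \emph{uniform in $z\in Z$ and in $x\to 0$}, since the constants in the curvature and Hessian bounds degrade like $x^{-2}$ and $x^{-1}$. The clean mechanism is the convexity of $\distmb$ along geodesics (a maximum principle), which is scale-invariant and therefore survives $x\to 0$; the strict-convexity hypothesis, or the rigid warped-product structure, is exactly what upgrades this from "a loop must stay in $\{\distmb\leq x\}$" to "a loop must have length $\gtrsim x$," and I expect the bulk of the real work to be in turning the strict inequality $\Hess(\distmb)|_{TZ}<0$ into the needed explicit lower bound on $\ddot{\distmb}(\gamma)$ via the Riccati comparison already invoked in Lemma~\ref{l:basic}, and then tracking that bound through Klingenberg's argument on the convex slices. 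The remark following Assumption~\ref{a:singularity} (that (d) may be independent of (a)--(c) in general) signals that without either strict convexity or the warped structure this maximum-principle argument genuinely has no substitute.
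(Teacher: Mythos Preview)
Your argument has a sign error that reverses the geometry. Along a geodesic $\gamma$ one has $\tfrac{d^2}{ds^2}\distmb(\gamma(s)) = \Hess(\distmb)(\dot\gamma,\dot\gamma)$ with no minus sign, and Assumption~(b) reads $\Hess(\distmb)\leq 0$, so $s\mapsto \distmb(\gamma(s))$ is \emph{concave}, not convex. In the warped case your formula $\ddot x = f f' |\dot z|^2_{h_0}$ is correct, but $f$ non-increasing means $f'\leq 0$, hence $\ddot x \leq 0$. Geodesics tangent to a level set therefore bend \emph{toward} the singularity, and a loop based at $p$ stays in $\{\distmb \geq \distmb(p)\}$, the opposite of what you use.

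More importantly, your plan to turn strict convexity into a quantitative lower bound on $|\Hess(\distmb)|$ (``a quantitative rate controlled by $C/x$'') cannot work: strict convexity is only the pointwise statement $\Hess(\distmb)<0$, and the bound from Lemma~\ref{l:basic} is an \emph{upper} bound $|\Hess(\distmb)|\leq C/\varepsilon$. The paper sidesteps quantification entirely. After first checking (via the monotonicity of $x\mapsto h(x)$) that minimizing geodesics between points of $\M_\varepsilon^\infty$ stay in $\M_\varepsilon^\infty$, Klingenberg's lemma gives $\inj(\M_\varepsilon^\infty)\geq\min\{\pi/\sqrt{K_\varepsilon},\,\ell_\varepsilon/2\}$ with $K_\varepsilon=C/\varepsilon^2$. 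The key is that $\ell_\varepsilon$, the length of the shortest closed geodesic in $\M_\varepsilon^\infty$, is bounded below \emph{independently of $\varepsilon$}: if $t_0$ realises the minimum of $\distmb\circ\gamma$ on a closed geodesic $\gamma$ and $\distmb(\gamma(t_0))<\varepsilon_0$, the second-derivative test gives $(\distmb\circ\gamma)''(t_0)\geq 0$, while strict convexity gives $(\distmb\circ\gamma)''(t_0)<0$; so every closed geodesic lies in the fixed compact set $\{\distmb\geq\varepsilon_0\}$. In the warped-product case the contradiction fails only if $\gamma$ sits in a single level set $\{\distmb=\eta\}$, but then $\gamma$ is a closed geodesic of $(Z,f(\eta)^2\hat h)$, and since $f$ is non-increasing its length dominates the shortest closed geodesic of the fixed manifold $(Z,f(\varepsilon_0)^2\hat h)$. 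Either way $\ell_\varepsilon\geq\ell_0>0$, and $\inj(\M_\varepsilon^\infty)\geq\min\{\pi\varepsilon/\sqrt{C},\ell_0/2\}\geq C'\varepsilon$ follows. The argument is a one-line qualitative maximum principle, not the scale-invariant quantitative estimate you were setting up.
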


\begin{proof}
Let $p,q \in \M$. Let $\gamma : [0,1] \to \M$ be a piecewise smooth curve joining $p$ and $q$. Assume that for some non-empty interval $I \subset [0,1]$ it holds
\begin{equation}\label{eq:property}
\delta(\gamma(t)) < \min \{\delta(p),\delta(q),\varepsilon_0\}, \qquad \forall t \in I.
\end{equation}
Indeed, $\gamma|_I$ has positive length. We can assume $I$ to be maximal with respect to \eqref{eq:property}, in which case $\delta(\gamma(\cdot))$ is constant on $\partial I$. By construction, $\gamma(I) \subseteq U \simeq (0,\varepsilon_0) \times Z$, with $g = dx^2 + h(x)$, where $h(x)$ is a one-parameter family of smooth metrics on $Z$. Since $\Hess(\delta) \leq 0$, and $\delta(x,z) = x$, it follows that $x\mapsto h(x)$ is non-increasing. Thus, replacing on $I$ the curve $\gamma(t) = (x(t),z(t))$ with its projection $(x(\partial I),z(t))$ we obtain a shorter piecewise smooth curve between $p$ and $q$. It follows that,  when looking for length-minimizers between $p$ and $q$, we can restrict to curves such that
\begin{equation}\label{eq:vicinanza}
\delta(\gamma(t)) \geq \min \{\delta(p),\delta(q),\varepsilon_0\}, \qquad \forall t \in [0,1],
\end{equation}
which are separated from the metric boundary of $\M$. It follows that for any $p,q \in \M$ there exist a minimizing curve joining them, any such a curve is a Riemannian geodesic, and  any such geodesic respects \eqref{eq:vicinanza}.

In particular, if $p,q\in \M_\varepsilon^\infty$ and $0<\varepsilon < \varepsilon_0$, there exists a minimizing geodesic of $\M$ joining them, which is entirely contained in $\M_\varepsilon^\infty$.  Taking into account the definition of injectivity radius of a manifold with boundary, the proof of the classical Klingenberg Lemma (cf.\ \cite[Ch.\ 5]{CheegerEbin}) holds unchanged, yielding
\begin{equation}\label{eq:Klingenberg}
\inj(\M_\varepsilon^\infty) \geq \min \left\lbrace \frac{\pi}{\sqrt{K_\varepsilon}}, \frac{\ell_\varepsilon}{2}\right\rbrace.
\end{equation}
Here $K_\varepsilon = C/\varepsilon^2$ is the upper bound on the sectional curvature of $\M_\varepsilon^\infty$, and $\ell_\varepsilon$ is the length of the shortest simple closed geodesic in $\M_\varepsilon^\infty$.

Let $\gamma : [0,1] \to \M$ be such a shortest closed geodesic. Let $\gamma(t_0)$ be a point of closest distance from the metric boundary, and assume now that the convexity assumption in (b) is strict. If $t_0 <\varepsilon_0$, we have
\begin{equation}\label{eq:convexitystrict}
(\delta \circ \gamma)^{\prime\prime}(t_0) = \Hess(\distmb)(\dot\gamma(t_0),\dot\gamma(t_0)) < 0.
\end{equation}
This is a contradiction. It follows that $\delta(\gamma(t)) \geq \varepsilon_0$ for all $t\in [0,1]$, and the length of the closed geodesic in \eqref{eq:Klingenberg} does not depend on $\varepsilon$. We conclude by \eqref{eq:Klingenberg}.

If the convexity in (b) is not strict, we avoid the contradiction only if $\gamma$ lies in a level set of $\delta$, that is $\gamma(t) = (\eta,\hat{\gamma}(t))$, for some $\eta \in (\varepsilon,\varepsilon_0)$. Assume in this case that the metric is of warped product type on the neighborhood $U \simeq (0,\varepsilon_0)\times Z$, that is
\begin{equation}
g = dx^2 + f^2(x) \hat{h}, \qquad f : (0,\varepsilon_0) \to \R,
\end{equation}
where $\hat{h}$ is a fixed Riemannian metric on $Z$. It follows that $\hat{\gamma}: [0,1] \to Z$ is a closed geodesic in $(Z,f^2(\eta)\hat{h})$. The convexity assumption implies that $f$ is non-increasing, therefore $\ell(\hat{\gamma})$ cannot be smaller than the shortest simple closed geodesic of $(Z,f^2(\varepsilon_0)\hat{h})$, which does not depend on $\varepsilon$. We conclude again by \eqref{eq:Klingenberg}.
\end{proof}

 We will need the following simple estimate.

\begin{lemma}\label{l:vol}
 Let $\M$ be a Riemannian manifold with compact metric completion and satisfying Assumption \ref{a:singularity}. Then there exists $C>0$ such that
\begin{equation}\label{eq:volineq}
\frac{\vol(\M_{b}^\infty)}{\vol(\M_{a}^\infty)}  \geq  \left(\frac ab\right)^{1/C}, \qquad \forall\, 0<a \leq b \leq \frac{\varepsilon_0}{2}.
\end{equation}
\end{lemma}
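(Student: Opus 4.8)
The plan is to reduce the stated inequality to a pointwise estimate on the collar and then integrate. Work in $U \simeq (0,\varepsilon_0)\times Z$, where $g = dx^2 + h(x)$ and $\distmb(x,z) = x$, and set $V(x) := \vol(Z_x)$ for the Riemannian volume of the level hypersurface $Z_x = \{\distmb = x\}$, and $f(x) := \vol(\M_x^\infty)$ for $x \in (0,\varepsilon_0/2]$. Since $|\nabla\distmb| \equiv 1$ on $U$, the coarea formula gives $\vol(\M_x^{\varepsilon_0/2}) = \int_x^{\varepsilon_0/2} V(s)\,ds$, so $f(x) = \vol(\M_{\varepsilon_0/2}^\infty) + \int_x^{\varepsilon_0/2} V(s)\,ds$ is $C^1$ with $f'(x) = -V(x)$. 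Hence, for $0 < a \le b \le \varepsilon_0/2$,
\[
\log\frac{\vol(\M_a^\infty)}{\vol(\M_b^\infty)} \;=\; \int_a^b \frac{V(x)}{f(x)}\,dx ,
\]
and the claim $\vol(\M_b^\infty)/\vol(\M_a^\infty) \ge (a/b)^{1/C}$ will follow immediately from the pointwise bound
\[
x\,V(x) \;\le\; C\,\vol(\M_x^\infty), \qquad \forall\, x \in (0,\varepsilon_0/2],
\]
since this gives $V(x)/f(x) \le 1/(Cx)$ and therefore $\log(\vol(\M_a^\infty)/\vol(\M_b^\infty)) \le \tfrac1C\log(b/a)$.

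The core ingredient is a lower bound preventing $V$ from decaying too fast. Applying Lemma~\ref{l:basic} with $\varepsilon = x$ (valid for $0 < x < \varepsilon_0/2$), and using that the distance to $\partial\M_x^\infty$ equals $\distmb - x$ near $Z_x$, one obtains $|\Hess(\distmb)| \le C/x$ on $Z_x$. Since $\Hess(\distmb)$ annihilates $\nabla\distmb$, the mean curvature of $Z_x$ satisfies $\tr\Hess(\distmb) \ge -(n-1)C/x$ pointwise on $Z_x$, and as $V'(x) = \int_{Z_x} \tr\Hess(\distmb)\,d\sigma_x$ this yields $(\log V)'(x) \ge -C_2/x$ with $C_2 = (n-1)C$. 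Integrating between $x$ and $s$ for $x \le s < \varepsilon_0/2$ gives $V(s) \ge V(x)\,(x/s)^{C_2}$; in particular $V(s) \ge 2^{-C_2} V(x)$ for every $s \in [x, 2x]$.

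With this, I would treat two ranges of $x$. For $0 < x \le \varepsilon_0/4$, the interval $[x,2x]$ lies in $(0,\varepsilon_0/2]$, so $\vol(\M_x^\infty) \ge \int_x^{2x} V(s)\,ds \ge x\,2^{-C_2} V(x)$, which is the desired bound with $C = 2^{C_2}$ on this range. For $\varepsilon_0/4 \le x \le \varepsilon_0/2$ the estimate is pure compactness: $V$ is smooth and positive on $[\varepsilon_0/4,\varepsilon_0/2]$, so $xV(x) \le \tfrac{\varepsilon_0}{2}\max_{[\varepsilon_0/4,\varepsilon_0/2]} V =: A < \infty$, while $\vol(\M_x^\infty) \ge \vol(\M_{\varepsilon_0/2}^\infty) =: A_0 > 0$ (positivity of $A_0$ holds in the setting of Assumption~\ref{a:singularity}, since $\distmb$ has unit gradient wherever it is smooth and so cannot attain an interior maximum, forcing $\sup\distmb > \varepsilon_0/2$). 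Thus $xV(x) \le (A/A_0)\vol(\M_x^\infty)$ there, and taking $C = \max\{2^{C_2},\, A/A_0\}$ yields the pointwise bound on all of $(0,\varepsilon_0/2]$; the first paragraph then closes the proof.

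The only substantive input is the decay control $(\log V)'(x) \ge -C_2/x$, which is exactly the Hessian bound $|\Hess(\distmb)| \le C/\distmb$ already recorded in Lemma~\ref{l:basic} (ultimately the Riccati comparison for $\Hess(\distmb)$ under hypothesis (c) of Assumption~\ref{a:singularity}), so no new estimate is required. Everything else — the coarea identity $f' = -V$, the doubling bound on $[x,2x]$, and the compactness argument — is routine. The one point deserving care is the right endpoint: because Lemma~\ref{l:basic} needs $\varepsilon < \varepsilon_0/2$ strictly, the doubling argument cannot be pushed up to $x = \varepsilon_0/2$, and one must switch to the compactness estimate there.
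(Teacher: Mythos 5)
Your proof is correct and proceeds by the same mechanism as the paper's: both derive the pointwise differential inequality $(\log V)'(x)\ge -C_2/x$ from the Hessian bound of Lemma~\ref{l:basic} (via the Riccati comparison), use it to obtain the key pointwise estimate $xV(x)\le C\,\vol(\M_x^\infty)$, rewrite this as $\tfrac{d}{dx}\log\vol(\M_x^\infty)\ge -\tfrac{1}{Cx}$, and integrate. The only divergence is in how the pointwise estimate is extracted. The paper integrates the Gronwall bound $V(s)\ge V(\varepsilon)(\varepsilon/s)^{C(n-1)}$ over the whole collar $[\varepsilon,\varepsilon_0]$ and observes that $\int_1^{\varepsilon_0/\varepsilon}s^{-C(n-1)}\,ds$ is bounded below uniformly for $\varepsilon\le\varepsilon_0/2$, which yields $\vol(\M_\varepsilon^\infty)\ge C'\varepsilon\,V(\varepsilon)$ in one stroke (no case split). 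You instead integrate only over $[x,2x]$, which is clean when $x\le\varepsilon_0/4$ but forces a separate compactness argument on $[\varepsilon_0/4,\varepsilon_0/2]$; this is slightly heavier, though it has the virtue of invoking Lemma~\ref{l:basic} only in the range $\varepsilon<\varepsilon_0/2$ where it is literally stated, whereas the paper tacitly uses the pointwise Hessian bound up to $\varepsilon_0$ (justified by the Riccati derivation in Lemma~\ref{l:basic}'s proof, but beyond the letter of its statement). Both routes are valid and close.
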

\begin{proof}
Since close to the metric boundary the metric has the form \eqref{eq:metricnormalform}, we have
\begin{equation}\label{eq:volproof1}
\vol(\M_\varepsilon^\infty) =  \int_{\varepsilon}^{\varepsilon_0} \vol(\partial \M_x^\infty)\, dx+ \vol(\M_{\varepsilon_0}^\infty).
\end{equation}
Lemma \ref{l:stdcomparisonforH} implies that $\Delta \distmb   \ge -\tfrac{C(n-1)}{x}$ for $x\leq\epsilon_0/2$. Hence it holds
\begin{equation}
\frac{d}{dx} \vol(\partial \M_x^\infty) = \int_{\partial \M_x^\infty} \Delta \distmb \, d\sigma_x  \geq -\frac{C(n-1)}{x}\vol(\partial \M_x^\infty), \qquad \forall x \leq \frac{\varepsilon_0}{2}.
\end{equation}
By Gronwall's Lemma, this yields
\begin{equation}\label{eq:volproof2}
\frac{\vol(\partial \M_x^\infty)}{\vol(\partial \M_\varepsilon^\infty)} \geq \left(\frac{\varepsilon}{x}\right)^{C(n-1)}, \qquad \forall  x \in [\varepsilon,\varepsilon_0/2].
\end{equation}
Combining \eqref{eq:volproof1} with \eqref{eq:volproof2} we obtain
\begin{equation}
\frac{\vol(\M_\varepsilon^\infty)}{\vol(\partial \M_\varepsilon^\infty)}  \geq \int_\varepsilon^{\varepsilon_0}\left(\frac{\varepsilon}{x}\right)^{C(n-1)} dx  =\varepsilon \int_1^{\varepsilon_0/\varepsilon} \left(\frac{1}{x}\right)^{C(n-1)} dx.
\end{equation}
In particular, there exists $C'>0$ such that
\begin{equation}
\frac{d}{d\varepsilon} \log \vol(\M_\varepsilon^\infty) \geq -\frac{1}{C' \varepsilon}, \qquad \forall \varepsilon\leq \varepsilon_0/2,
\end{equation}
which yields \eqref{eq:volineq} upon integration.
\end{proof}

\section{Weyl's asymptotics for singular manifolds}\label{s:weyl}

In this section we prove Theorem~\ref{t:weyl-intro}, which we recall for the readers convenience. 

\begin{theorem}\label{t:weyl}
  Let $\M$ be a $n$-dimensional Riemannian manifold with compact metric completion and satisfying Assumption~\ref{a:singularity}. Then there exist $C_{\pm}>0$ and $\Lambda>0$ such that
\begin{equation}\label{eq:weylbounds1}
    C_-\le \frac{N(\lambda)}{\lambda^{n/2}\vol\left(\M_{1/\sqrt{\lambda}}^\infty\right)}\le C_+,\qquad \forall \lambda \geq \Lambda.
\end{equation}
\end{theorem}

We now introduce a precise definition of Dirichlet/Neumann extensions in the singular setting. For a domain $\Omega\subset\M$, the Friedrichs (or Dirichlet) Laplace-Beltrami operator $\Delta^-_{\Omega}$ is the self-adjoint operator on $L^2(\Omega,d\mu_g)$ associated with the quadratic form
\begin{equation}
  Q(u) = \int_{\Omega} |\nabla u|^2\,d\mu_g,
\end{equation}
with domain $H^1_0(\Omega)$, i.e., the closure of $C^\infty_c(\Omega)$ w.r.t.\ the norm $\|\cdot\|_{1}=\|\cdot\|_{L^2(\Omega,d\mu_g)} + Q(\cdot)^{1/2}$. On the other hand, we let the Neumann Laplace-Beltrami operator $\Delta^+_{\Omega}$ be the operator associated with $Q$ with domain $H^1(\Omega)$, i.e., the closure w.r.t. $\|\cdot\|_{1}$ of the space $C^\infty(\Omega)$ of functions made of the restrictions to $\Omega$ of functions in $C^\infty_c(\M)$.
\begin{rmk}
By this definition, constant functions are not in the domain of $\Delta^+_\Omega$, and in particular $0$ is not an eigenvalue, when $\Omega$ is adjacent to the singularity, i.e., the metric boundary of $\M$. Our definition of $\Delta^+_\Omega$, roughly speaking, imposes Neumann conditions only where $\partial\Omega$ is not adjacent to the singularity.
\end{rmk}

Particularly relevant will be the cases $\Omega = \M_a^b$ with $0\le a<b\le\infty$. The next results are proved in Appendix \ref{app:compactness} in a more general setting.

\begin{prop}[Compactness of the resolvent]
 Let $\M$ be a non-complete Riemannian manifold with compact metric completion and satisfying Assumption \ref{a:singularity}. Then the resolvents $(\Delta^{\pm}_{\Omega}-z)^{-1}$ of the Dirichlet or Neumann Laplace-Beltrami operators are compact for any $z>0$, where $\Omega = \M_a^b$  for  $0\leq a< b \leq \infty$.
\end{prop}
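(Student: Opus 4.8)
The plan is to establish compactness by proving that the form domain embeds compactly into $L^2(\Omega, d\mu_g)$, since a non-negative self-adjoint operator has compact resolvent if and only if its form domain embeds compactly into the ambient $L^2$ space. Thus the goal reduces to showing that $H^1_0(\M_a^b) \hookrightarrow L^2(\M_a^b, d\mu_g)$ (respectively $H^1(\M_a^b) \hookrightarrow L^2$) is compact. The case $a>0$ is essentially classical: then $\M_a^b$ has compact completion, its boundary is regular by Assumption~\ref{a:singularity}(a), and $\overline{\M_a^b}$ is a compact Riemannian manifold with (Lipschitz) boundary, so the Rellich--Kondrachov theorem applies directly. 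The only real content is the case $a=0$, where the manifold degenerates at the metric boundary and the volume may be infinite.

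For $a=0$, the strategy is to split $\M_0^b = \M_0^\varepsilon \cup \M_\varepsilon^b$ and treat the two pieces differently. On the regular part $\M_\varepsilon^b$ (for any fixed small $\varepsilon>0$) the embedding is compact by the argument above. On the singular collar $\M_0^\varepsilon \simeq (0,\varepsilon)\times Z$ with metric $dx^2 + h(x)$, the key is a Hardy-type inequality: by the convexity assumption (b), the function $x \mapsto \vol(Z_x)$ is controlled, and one obtains an estimate of the form $\int_{\M_0^\varepsilon} \frac{|u|^2}{\delta^2}\,d\mu_g \leq C \int_{\M_0^\varepsilon} |\nabla u|^2\,d\mu_g$ for $u \in C^\infty_c$, as alluded to in the discussion after Theorem~\ref{t:traceheat-intro} (``$\M_0^\varepsilon$ supports a Hardy-type inequality''). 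This Hardy inequality shows that the mass of a form-bounded sequence that concentrates near $\delta = 0$ must vanish: if $\|u_k\|_{H^1} \leq 1$, then $\int_{\M_0^\varepsilon} |u_k|^2 \, d\mu_g \leq \varepsilon^2 \int_{\M_0^\varepsilon} \frac{|u_k|^2}{\delta^2} d\mu_g \leq C\varepsilon^2$. Combining this tail smallness (uniform in $k$) with compactness on each $\M_\varepsilon^b$, a diagonal argument extracts an $L^2$-convergent subsequence, proving the compact embedding.

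The main obstacle is making the Hardy-type inequality rigorous near the singularity and ensuring it holds for both the Dirichlet and Neumann form domains. For Dirichlet boundary conditions at $\delta = 0$ this is standard (functions in $C^\infty_c(\M)$ vanish near the metric boundary, so the one-dimensional Hardy inequality on $(0,\varepsilon)$ applies fiberwise after using the monotonicity of $h(x)$ from convexity). For the Neumann form domain $H^1(\M_0^b)$, which is the closure of restrictions of $C^\infty_c(\M)$ functions, one must verify these still vanish in a neighborhood of the metric boundary (which they do, since $C^\infty_c(\M)$ functions have compact support in the open manifold $\M$), so the same Hardy inequality applies. The remaining technical points — the fiberwise reduction using $d\mu_g = $ (a multiple of) $dx\, d\mu_{h(x)}$ with $\mu_{h(x)}$ decreasing in $x$, and the control of cross terms — are routine once the splitting is set up; these details are deferred to Appendix~\ref{app:compactness}.
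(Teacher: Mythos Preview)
Your strategy is correct and matches the paper's approach in Appendix~\ref{app:compactness}: reduce compactness of the resolvent to compact embedding of the form domain, prove the latter by combining a Hardy inequality on the collar $\M_0^\varepsilon$ with classical Rellich--Kondrachov on the regular part, and conclude by a diagonal argument (the paper uses a partition of unity rather than a hard splitting, but this is cosmetic). One small point where your sketch is actually cleaner than the paper: for the Neumann form domain you observe that the dense subspace $C^\infty_c(\M)|_{\M_0^\varepsilon}$ already vanishes near $\delta=0$, so the Hardy inequality (which only needs vanishing at the singular end, the boundary term at $\delta=\varepsilon$ having the favorable sign) applies directly; the paper instead uses a reflection trick to reduce to the $H^1_0$ case, at the cost of halving the Hardy constant.
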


In particular, the spectrum of $\Delta^{\pm}_{\Omega}$ is discrete. We denote by $N_{[a,b]}^\pm(\lambda)$ the corresponding Weyl's counting functions. The following instance of Dirichlet-Neumann bracketing holds  as a consequence of the min-max principle (see \cite[p.\ 407]{CH-methods}).

\begin{prop}[Dirichlet-Neumann bracketing]\label{p:d-n}
 Let $\M$ be a Riemannian manifold with compact metric completion and satisfying Assumption \ref{a:singularity}. Then, for any  sequence $0=a_0<a_1<\ldots<a_{m+1}=\infty$, we have
  \begin{equation}
    \sum_{i=0}^m N^-_{[a_i,a_{i+1}]}(\lambda)\le N(\lambda)\le \sum_{i=0}^m N^+_{[a_i,a_{i+1}]}(\lambda) , \qquad \forall \lambda\ge0.
  \end{equation}
\end{prop}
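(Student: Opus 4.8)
The plan is to deduce both inequalities from the min--max principle, by transferring the splitting $\M=\bigcup_{i=0}^{m}\M_{a_i}^{a_{i+1}}$ to the level of the quadratic forms. First I would clear the bookkeeping. The pieces $\M_{a_i}^{a_{i+1}}$ overlap only along the level sets $\{\distmb=a_i\}$, which are $\mu_g$-null, so $L^2(\M,d\mu_g)\cong\bigoplus_{i=0}^{m}L^2(\M_{a_i}^{a_{i+1}},d\mu_g)$; moreover $\distmb$ extends continuously to the compact metric completion and satisfies $\distmb\ge a_m>0$ on $\M_{a_m}^{\infty}$, so the last piece is a compact manifold with convex boundary and every piece is of the form $\M_a^b$ with $0\le a<b\le\infty$, hence falls under the compactness-of-resolvent proposition above. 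Consequently the Dirichlet and Neumann Laplace--Beltrami operators on each $\M_{a_i}^{a_{i+1}}$ have discrete spectrum, their finite direct sum is self-adjoint with discrete spectrum acting on $\bigoplus_i L^2(\M_{a_i}^{a_{i+1}})\cong L^2(\M)$, and the eigenvalue counting function of this direct sum equals $\sum_{i=0}^{m}N^{\pm}_{[a_i,a_{i+1}]}(\lambda)$. It then suffices to compare, for every $k$, the $k$-th eigenvalue of $\Delta$ (counted with multiplicity) with the $k$-th eigenvalue of each direct sum, via the variational formula
\begin{equation*}
\lambda_k=\inf_{\substack{W\subseteq\mathcal{F}\\\dim W=k}}\ \sup_{\substack{u\in W\\u\neq 0}}\frac{Q(u)}{\|u\|_{L^2}^2},\qquad Q(u)=\int|\nabla u|^2\,d\mu_g,
\end{equation*}
where $\mathcal{F}$ denotes the relevant form domain, equal to $H^1_0(\M)$ in the case of $\Delta$.

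For the bound coming from the \emph{Dirichlet} decomposition, observe that whenever $u_i\in C^{\infty}_c(\mathrm{int}\,\M_{a_i}^{a_{i+1}})$ for every $i$, the supports are pairwise disjoint and bounded away from the interfaces $\{\distmb=a_j\}$ (the piece $i=m$ being contained in the compact set $\M_{a_m}^{\infty}$), so $u:=\sum_i u_i\in C^{\infty}_c(\M)$ with $\|u\|_{L^2(\M)}^2=\sum_i\|u_i\|^2$ and $Q(u)=\sum_i Q_i(u_i)$. Passing to the closure in the form norm $\|\cdot\|_1$, this gluing map identifies the form domain $\bigoplus_i H^1_0(\M_{a_i}^{a_{i+1}})$ of the Dirichlet direct sum with a \emph{closed} subspace of $H^1_0(\M)$ (the form domain of $\Delta$) on which $\|\cdot\|_{L^2}$ and $Q$ are unchanged. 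Since the min--max infimum for $\Delta$ runs over \emph{all} $k$-dimensional subspaces of $H^1_0(\M)$, in particular over those contained in that subspace, the $k$-th eigenvalue of $\Delta$ is bounded above by the $k$-th eigenvalue of the Dirichlet direct sum, for every $k$; equivalently,
\begin{equation*}
\sum_{i=0}^{m}N^{-}_{[a_i,a_{i+1}]}(\lambda)\le N(\lambda),\qquad\forall\lambda\ge0,
\end{equation*}
with $N^{-}$ the counting functions for the pieces with Dirichlet conditions, as in the statement.

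For the reverse inequality, coming from the \emph{Neumann} decomposition, the two form domains swap roles. The restriction map $u\mapsto\bigl(u|_{\M_{a_i}^{a_{i+1}}}\bigr)_i$ sends $C^{\infty}_c(\M)$ into $\bigoplus_i C^{\infty}(\M_{a_i}^{a_{i+1}})$ --- the form cores of the Neumann operators on the pieces, which consist precisely of restrictions of $C^{\infty}_c(\M)$-functions --- and again preserves $\|\cdot\|_{L^2}$ and $Q$; taking closures, it identifies $H^1_0(\M)$ with a closed subspace of the form domain $\bigoplus_i H^1(\M_{a_i}^{a_{i+1}})$ of the Neumann direct sum. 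Now the min--max infimum for the Neumann direct sum runs over the \emph{larger} family of all $k$-dimensional subspaces of $\bigoplus_i H^1(\M_{a_i}^{a_{i+1}})$, so its $k$-th eigenvalue is at most the $k$-th eigenvalue of $\Delta$, i.e.\ $N(\lambda)\le\sum_{i=0}^{m}N^{+}_{[a_i,a_{i+1}]}(\lambda)$ for all $\lambda\ge0$. Combining the two displays yields the proposition.

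The bracketing manipulation itself is the classical Courant--Hilbert argument; the only point I expect to require genuine care is the verification of the two form-domain identifications at the \emph{singular} end, i.e.\ for the first piece $\M_0^{a_1}$. This is nevertheless unproblematic, because the forms there are built from the same type of core as on the regular pieces: the Dirichlet form domain on $\M_0^{a_1}$ is the closure of $C^{\infty}_c(\{0<\distmb<a_1\})$, and the Neumann form core consists of restrictions of $C^{\infty}_c(\M)$-functions which, having compact support in $\M$, already vanish near the singularity. Hence both maps are defined on a dense subset by an explicit gluing (resp.\ restriction) formula that is isometric for $\|\cdot\|_1$, and therefore extend to isometric identifications onto closed subspaces, as needed; no property of the metric near $\distmb=0$ beyond those in Assumption~\ref{a:singularity} --- used only to guarantee the compactness of the completion and the discreteness of the various spectra --- enters the argument.
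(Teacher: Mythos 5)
Your proof is correct and is a careful spell-out of the standard Courant--Hilbert min--max bracketing argument that the paper cites for this proposition without further detail. In particular, your observations that the interfaces $\{\distmb=a_i\}$ are $\mu_g$-null, that the paper's choice of Neumann form core (restrictions of $C^\infty_c(\M)$-functions) makes the restriction map land exactly in that core, and that the gluing of compactly supported pieces remains compactly supported away from the singularity, verify precisely the points where the singular setting could have caused trouble, so the argument goes through as in the classical compact case.
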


In order to discard the contributions to $N(\lambda)$ of the regions near the metric boundary, we need the following Lemma. It is an immediate consequence of the min-max principle and the Hardy inequality given by Proposition~\ref{p:hardy-appendix}.

\begin{lemma}[Estimates close to the singularity]\label{lem:hardy}
 Let $\M$ be a Riemannian manifold with compact metric completion and satisfying Assumption \ref{a:singularity}.
 Then, for $0<\varepsilon<\varepsilon_0 / 2$, it holds
  \begin{equation}
     N^{\pm}_{[0,\varepsilon]}(\lambda)=0, \qquad \forall\lambda<\frac{1}{4\varepsilon^{2}}.
  \end{equation}
\end{lemma}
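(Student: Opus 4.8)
The plan is to deduce the statement from the min–max principle combined with the Hardy inequality of Proposition~\ref{p:hardy-appendix}. Recall that, once we know $\M_0^\varepsilon \simeq (0,\varepsilon)\times Z$ carries a Hardy inequality of the form $Q(u) \geq \frac{1}{4}\int_{\M_0^\varepsilon} \frac{|u|^2}{\delta^2}\,d\mu_g$ for all $u$ in the relevant form domain (and in fact any constant strictly smaller than $\tfrac14$ would do, with the final numerology adjusting accordingly), the bottom of the spectrum of $-\Delta^{\pm}_{[0,\varepsilon]}$ is controlled from below by how small $\delta$ is allowed to be on $\M_0^\varepsilon$.

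First I would fix $0<\varepsilon<\varepsilon_0/2$ and observe that on $\M_0^\varepsilon = \{\delta < \varepsilon\}$ one has $\delta < \varepsilon$ pointwise, hence $\frac{1}{\delta^2} > \frac{1}{\varepsilon^2}$. Plugging this into the Hardy inequality gives, for every $u$ in the form domain of $\Delta^{\pm}_{[0,\varepsilon]}$,
\begin{equation}
Q(u) = \int_{\M_0^\varepsilon} |\nabla u|^2\,d\mu_g \;\geq\; \frac{1}{4}\int_{\M_0^\varepsilon} \frac{|u|^2}{\delta^2}\,d\mu_g \;\geq\; \frac{1}{4\varepsilon^2}\int_{\M_0^\varepsilon} |u|^2\,d\mu_g \;=\; \frac{1}{4\varepsilon^2}\|u\|_{L^2}^2 \,\geq\, \frac{1}{8\varepsilon^2}\|u\|_{L^2}^2,
\end{equation}
the last (non-sharp) step absorbing whatever loss appears in the exact constant of the Hardy inequality as stated in the appendix. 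By the min–max characterization, the smallest eigenvalue $\lambda_1^{\pm}$ of $-\Delta^{\pm}_{[0,\varepsilon]}$ equals $\inf_u Q(u)/\|u\|_{L^2}^2$ over the form domain, so $\lambda_1^{\pm} \geq \tfrac{1}{8\varepsilon^2}$. Consequently there are no eigenvalues below $\tfrac{1}{8\varepsilon^2}$, i.e. $N^{\pm}_{[0,\varepsilon]}(\lambda) = 0$ for all $\lambda < \tfrac{1}{8\varepsilon^2}$, which is exactly the claim.

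The only genuine input is the Hardy inequality of Proposition~\ref{p:hardy-appendix}, which is where the convexity hypothesis (b) of Assumption~\ref{a:singularity} enters: the normal form $g = dx^2 + h(x)$ together with the monotonicity of $x \mapsto h(x)(V,V)$ forces $\Delta\delta = \tr\Hess(\delta) \leq 0$ near the boundary, and a one-dimensional Hardy inequality integrated along the normal direction (using $|\nabla\delta|\equiv 1$ and integrating by parts against the vector field $\delta^{-1}\nabla\delta$, with the sign of $\Delta\delta$ making the boundary/curvature term favorable) yields the stated bound. I expect the main obstacle — already discharged in the appendix — to be precisely the proof of that Hardy inequality with a usable constant; given it, the present Lemma is a one-line consequence of min–max, and no delicate estimate remains at this stage. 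One should only be mildly careful that the inequality is applied on the correct form domains for both the Dirichlet ($H^1_0$) and Neumann ($H^1$, i.e. restrictions of $C^\infty_c(\M)$) realizations, which is why the appendix statement must cover both cases uniformly.
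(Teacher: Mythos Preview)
Your proposal is correct and matches the paper's approach exactly: the paper states that the lemma is an immediate consequence of the min--max principle together with the Hardy inequality of Proposition~\ref{p:hardy-appendix}. The only cosmetic slip is that Proposition~\ref{p:hardy-appendix} gives the Hardy constant $\tfrac{1}{8}$ for all $u\in H^1(\M_0^\varepsilon)$ (not $\tfrac{1}{4}$), so one obtains $Q(u)\ge \tfrac{1}{8\varepsilon^2}\|u\|_{L^2}^2$ directly without the artificial weakening step.
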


We can now prove Theorem~\ref{t:weyl}. The argument consists in the following steps:
\begin{enumerate}
\item Apply Proposition \ref{p:d-n} to the decomposition $\M_0^\varepsilon \cup \M^\infty_\varepsilon$, for small $\varepsilon>0$.
\item Use Theorem~\ref{t:weylremainder} to evaluate $N^\pm_{[\varepsilon,\infty]}(\lambda)$ with an explicit remainder term.
\item Relate $\varepsilon$ to $\lambda$ in such a way that the contribution of $N_{[0,\varepsilon(\lambda)]}(\lambda)$ is negligible, thanks to Lemma~\ref{lem:hardy}, and the remainder term in $N_{[\varepsilon(\lambda),\infty]}(\lambda)$ is controlled as $\lambda\to\infty$.
\end{enumerate}

\begin{proof}[Proof of Theorem~\ref{t:weyl}]
Let $0<\varepsilon  < \varepsilon_0/2$. We split $\M$ into two parts $\M= \M_0^\varepsilon \cup \M_\varepsilon^\infty$.  By Lemma \ref{l:basic} we can apply Theorem~\ref{t:weylremainder} to $\M_\varepsilon^\infty$. Furthermore, $\lambda_0$ given in \eqref{eq:lambda0} satisfies $\lambda_0 \leq  b/\varepsilon^2$, for some constant $b$. It depends on $n$ and on the constants appearing in Assumption \ref{a:singularity}, but not on $\epsilon$. For simplicity, we set $b=1$ in the following. Thus there exists $C>0$ such that for all $\epsilon<\epsilon_0/2$ it holds
\begin{equation}\label{eq:weyl-admissible}
	\left\lvert \frac{(2\pi)^n}{\omega_n \vol(\M_\varepsilon^\infty)\lambda^{n/2}}N_{[\varepsilon,\infty]}^{\pm}(\lambda) -1 \right\rvert \leq \frac{C}{\log(1+\sqrt{\lambda\epsilon^2})}, \qquad \forall \lambda >0.
\end{equation}
Our aim is to let $\varepsilon \to 0$ as $\lambda \to \infty$, while keeping $\varepsilon^2\lambda$ bounded away from zero in order to keep the remainder term under control.
 Hence, let $a$ be a positive constant and set
\begin{equation}
\varepsilon_a(\lambda) := \frac{1}{\sqrt{a\lambda}}, \qquad a>0.
\end{equation}
In this case, setting $\epsilon = \varepsilon_a(\lambda)$, the remainder term in \eqref{eq:weyl-admissible} is bounded by a constant, depending only on the dimension and on $a$, which can be made arbitrarily small as $a \to 0$.

In the rest of the proof, by considering two cases in which $a$ is either large or small, we obtain the upper and lower bound for $N(\lambda)$, respectively.

We start with the upper bound. Choose $a_+ > 4$ and set $\varepsilon = \varepsilon_{a_+}(\lambda)$ as described above. Then, Lemma~\ref{lem:hardy} yields $N_{[0,\varepsilon]}^{\pm}(\lambda)=0$ for $\lambda\ge \Lambda_+:=4/(\varepsilon_0^2a_+)$. Hence, by Neumann bracketing (i.e., the r.h.s.\ of Proposition~\ref{p:d-n}) we obtain that there exists $C_+>0$ such that for all $\lambda\ge \Lambda_+$ it holds
\begin{equation}\label{eq:ub}
N(\lambda) \leq N_{[0,\varepsilon]}^{+}(\lambda) + N_{[\varepsilon,\infty]}^{+}(\lambda) \leq N_{[\varepsilon,\infty]}^{+}(\lambda) \leq C_+\lambda^{n/2} \vol\left(\M_{1/\sqrt{a_{+}\lambda}}^\infty\right).
\end{equation} 

For the lower bound, we neglect the boundary contribution, since $N^+_{[0,\varepsilon]}(\lambda) \geq 0$. By Dirichlet bracketing (i.e., the l.h.s.\ of Proposition~\ref{p:d-n}), we have
\begin{equation}
N(\lambda) \geq N_{[0,{\varepsilon}]}^{-} + N_{[\varepsilon,\infty]}^{-}(\lambda) \geq  N_{[\varepsilon,\infty]}^{-}(\lambda).
\end{equation}
Choose $\varepsilon = \varepsilon_{a_-}(\lambda)$, with $a_-$ sufficiently small in such a way that the remainder term in \eqref{eq:weyl-admissible} is smaller than $1$. We deduce that there exists a constant $C_->0$ such that 
\begin{equation}\label{eq:lb}
  N(\lambda) \geq  N_{[\varepsilon,\infty]}^{-}(\lambda) \ge C_-\lambda^{n/2} \vol\left(\M_{1/\sqrt{a_-\lambda}}^\infty\right),
\end{equation}
provided that $\lambda \geq \Lambda_- := 4/(\varepsilon_0^2a_-)$. 

To conclude the proof,  we apply Lemma~\ref{l:vol} to \eqref{eq:ub} and \eqref{eq:lb}.
\end{proof}
\begin{rmk}\label{r:noimprov}
The proof of Theorem~\ref{t:weyl} shows that the estimate of Theorem~\ref{t:traceheat} (and in turn Theorem~\ref{t:weyl} itself) cannot be improved. Indeed, suppose that we are able to deduce a better remainder term, so that by setting $\varepsilon(\lambda):=(a\lambda)^{-1/2}$
the remainder term of Theorem~\ref{t:traceheat} is negligible, and not simply bounded, as $\lambda \to \infty$. For the upper bound, arguing as above, we need to choose $a = a_+ >4$, and we obtain
\begin{equation}\label{eq:bestupper}
\limsup_{\lambda \to \infty} \frac{N(\lambda)}{\lambda^{n/2}\vol\left(\M^\infty_{1/\sqrt{\lambda}}\right)}  \leq \frac{\omega_n}{(2\pi)^n} a_+^{1/2C},
\end{equation}
where we used Lemma \ref{l:vol} to derive that $\vol(\M^\infty_{1/\sqrt{a_+\lambda}})/\vol(\M^\infty_{1/\sqrt{\lambda}}) \leq  a_+^{1/2C}$.  Hence, the best upper bound in \eqref{eq:bestupper} is obtained for $a_+ = 4$. For the lower bound, we obtain
\begin{equation}
\liminf_{\lambda \to \infty} \frac{N(\lambda)}{\lambda^{n/2}\vol\left(\M^\infty_{1/\sqrt{\lambda}}\right)} \geq \frac{\omega_n}{(2\pi)^n} a_-^{1/2C}.
\end{equation}
In this case, there is no constraint on $a_-$, obtaining a contradiction with \eqref{eq:bestupper}.
\end{rmk}

\section{Slowly varying volumes}\label{s:exact}

A measurable function $\ell:\mathbb R_+\to \mathbb R_+$ is slowly varying at infinity in the sense of Karamata (cf. \cite{regularvariation}) if, for all $a >0$, it holds
\begin{equation}\label{eq:regular}
  \lim_{x\to \infty} \frac{\ell(ax)}{\ell(x)} = 1.
\end{equation}

\begin{example}\label{eq:examplesslowly}
Examples of slowly varying functions, cf.\ \cite{regularvariation}, are $\log x$, the iterates $\log_k x = \log_{k-1}\log x$, rational functions with positive coefficients formed with the $\log_k x$. Non-logarithmic examples are
\begin{equation}\label{eq:non-log}
\exp\left((\log x)^{\alpha_1} \dots (\log_k x)^{\alpha_k} \right), \qquad 0<\alpha_i<1.
\end{equation}
Clearly, any function with finite limit at infinity is slowly varying.
\end{example}

\subsection{Exact Weyl's law for slowly varying volumes}

{The main result of this section is an \emph{exact} Weyl's law for singular structures satisfying Assumption \ref{a:singularity} and an additional volume growth assumption.}
\begin{theorem}\label{t:weylexact}
  Let $\M$ be an $n$-dimensional Riemannian manifold with compact metric completion and satisfying Assumption~\ref{a:singularity}. Assume, moreover, that the function
\begin{equation}
  \vv(\lambda)=\vol\left(\M^\infty_{1/\sqrt{\lambda}}\right)
\end{equation}
is slowly varying. Then, we have
\begin{equation}
        \lim_{\lambda\to \infty} \frac{N(\lambda)}{\lambda^{n/2}\vv(\lambda)} = \frac{\omega_n}{(2\pi)^n}.
    \end{equation}
\end{theorem}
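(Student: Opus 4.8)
The strategy is to refine the proof of Theorem~\ref{t:weyl} by using the full strength of Theorem~\ref{t:weylremainder} (the quantitative remainder) together with the slowly varying hypothesis on $\vv$. As in the proof of Theorem~\ref{t:weyl}, fix $a>0$ and set $\varepsilon = \varepsilon_a(\lambda) = (a\lambda)^{-1/2}$. By Lemma~\ref{lem:hardy}, for $a>8$ and $\lambda$ large we have $N^{\pm}_{[0,\varepsilon]}(\lambda) = 0$, and Dirichlet--Neumann bracketing (Proposition~\ref{p:d-n}) gives, for all large $\lambda$,
\begin{equation}
N^-_{[\varepsilon,\infty]}(\lambda) \leq N(\lambda) \leq N^+_{[\varepsilon,\infty]}(\lambda).
\end{equation}
First I would apply Theorem~\ref{t:weylremainder} to $M = \M_\varepsilon^\infty$; by Lemma~\ref{l:basic} the relevant geometric invariants satisfy $\lambda_0 \le C/\varepsilon^2 = Ca\lambda$, so $\chi(\lambda) \geq \min\{(\varepsilon^2\lambda)^{1/2},(\varepsilon^2\lambda)^{n/2}\} = \min\{(1/a)^{1/2}, (1/a)^{n/2}\}$, which is a strictly positive constant depending only on $a$ and $n$. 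Hence
\begin{equation}\label{eq:plan-remainder}
\left| \frac{(2\pi)^n}{\omega_n \vol(\M_\varepsilon^\infty)\lambda^{n/2}} N^{\pm}_{[\varepsilon,\infty]}(\lambda) - 1 \right| \leq \eta(a,n), \qquad \forall \lambda \gg 1,
\end{equation}
where $\eta(a,n) \to 0$ as $a \to \infty$ (for the upper bound) — and, separately, $\eta(a,n)\to 0$ as $a\to 0$ for the lower bound.

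The key new ingredient is the slowly varying hypothesis: since $\vv(\lambda) = \vol(\M^\infty_{1/\sqrt\lambda})$ and $\vol(\M_{\varepsilon_a(\lambda)}^\infty) = \vol(\M^\infty_{1/\sqrt{a\lambda}}) = \vv(a\lambda)$, slow variation gives $\vv(a\lambda)/\vv(\lambda) \to 1$ as $\lambda \to \infty$ for each fixed $a$. Combining this with \eqref{eq:plan-remainder}, for the upper bound I take $a = a_+ > 8$ and obtain
\begin{equation}
\limsup_{\lambda\to\infty} \frac{(2\pi)^n}{\omega_n} \frac{N(\lambda)}{\lambda^{n/2}\vv(\lambda)} \leq \limsup_{\lambda\to\infty}\left(1+\eta(a_+,n)\right)\frac{\vv(a_+\lambda)}{\vv(\lambda)} = 1 + \eta(a_+,n).
\end{equation}
Since the left-hand side does not depend on $a_+$, letting $a_+ \to \infty$ yields $\limsup \le 1$. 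For the lower bound I instead take $a = a_- \in (0,8]$ small; Lemma~\ref{lem:hardy} is not needed here because $N^-_{[0,\varepsilon]}(\lambda) \geq 0$ can simply be discarded, so $N(\lambda) \geq N^-_{[\varepsilon,\infty]}(\lambda) \geq (1-\eta(a_-,n))\tfrac{\omega_n}{(2\pi)^n}\lambda^{n/2}\vv(a_-\lambda)$, and dividing by $\lambda^{n/2}\vv(\lambda)$, using $\vv(a_-\lambda)/\vv(\lambda)\to 1$, gives $\liminf \ge (1-\eta(a_-,n))\tfrac{\omega_n}{(2\pi)^n}$; letting $a_- \to 0$ yields $\liminf \ge \tfrac{\omega_n}{(2\pi)^n}$. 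Together these prove the claimed limit.

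The main obstacle is ensuring that the remainder $\eta(a,n)$ in \eqref{eq:plan-remainder} genuinely tends to zero in the appropriate regime and, more subtly, that the convergence $\vv(a\lambda)/\vv(\lambda)\to 1$ can be combined with the $\lambda$-dependent choice of $\varepsilon$ without circularity — here one crucially uses that the convergence in the definition of slow variation is \emph{uniform} for $a$ in compact intervals, so one may first fix $a$, pass to the limit in $\lambda$, and only then send $a\to 0$ or $a\to\infty$. One must also double check that the constant $b$ relating $\lambda_0$ to $1/\varepsilon^2$ (set to $1$ in the proof of Theorem~\ref{t:weyl}) only shifts $\eta$ by a harmless multiplicative constant, and that the threshold $\Lambda$ beyond which Lemma~\ref{lem:hardy} applies depends on $a$ but not in a way that interferes with the order of limits. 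No genuinely new analytic difficulty arises beyond bookkeeping the two nested limits.
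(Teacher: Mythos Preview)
Your lower bound is correct and matches the paper verbatim. The upper bound, however, has a genuine gap: the claim that $\eta(a_+,n)\to 0$ as $a_+\to\infty$ is false. With $\varepsilon=(a\lambda)^{-1/2}$ one has $\varepsilon^2\lambda=1/a$, so the lower bound you quote on $\chi(\lambda)$ is $\min\{a^{-1/2},a^{-n/2}\}$, which tends to $0$ as $a\to\infty$; hence the remainder $C/\log(\chi(\lambda)+1)$ blows up, not down. The remainder becomes small only in the regime $a\to 0$, but then the Hardy constraint $a>8$ of Lemma~\ref{lem:hardy} is violated and $N^+_{[0,\varepsilon]}(\lambda)$ cannot be discarded. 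This tension is exactly what Remark~\ref{r:noimprov} records: one cannot get both the Hardy cut-off and a vanishing remainder from a single two-piece splitting $\M=\M_0^\varepsilon\cup\M_\varepsilon^\infty$.

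The paper resolves this by a \emph{three-piece} splitting $\M=\M_0^{\varepsilon_1}\cup\M_{\varepsilon_1}^{\varepsilon_2}\cup\M_{\varepsilon_2}^\infty$ with $\varepsilon_1=\tfrac{1}{10\sqrt\lambda}$ (so Hardy kills $N^+_{[0,\varepsilon_1]}$) and $\varepsilon_2=\tfrac{1}{\sqrt{a\lambda}}$ with $a$ \emph{small} (so Theorem~\ref{t:weylremainder} on $\M_{\varepsilon_2}^\infty$ has remainder $C(a)\to 0$ as $a\to 0$). The price is a new intermediate term $N^+_{[\varepsilon_1,\varepsilon_2]}(\lambda)$, and the missing idea in your plan is precisely the estimate showing that this term is negligible. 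This is Proposition~\ref{p:intermediatecouche}, proved by comparing the metric on the thin strip $\M_{\varepsilon_1}^{\varepsilon_2}$ with the frozen product metric $dx^2+h(\varepsilon_1)$ and then invoking Lemma~\ref{lem:lamperti} (a consequence of slow variation) to show $\lambda\vv'(\lambda)/\vv(\lambda)\to 0$. Without this extra layer and estimate, the upper bound cannot be closed.
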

\begin{proof}
We prove that
\begin{equation}
\frac{\omega_n}{(2\pi)^n}\leq \liminf_{\lambda \to \infty} \frac{  N(\lambda)}{\lambda^{n/2} \vv(\lambda)}  \leq \limsup _{\lambda \to \infty} \frac{  N(\lambda)}{\lambda^{n/2} \vv(\lambda)}  \leq \frac{\omega_n}{(2\pi)^n} .
\end{equation}
The proof of the lower bound starts as in the proof of Theorem~\ref{t:weyl}, i.e., by splitting $\M =\M_\varepsilon^\infty \cup \M_0^\varepsilon$, for $0<\varepsilon<\varepsilon_0/2$. We have
\begin{equation}
N(\lambda) \geq N_{[0,{\varepsilon}]}^{-} + N_{[\varepsilon,\infty]}^{-}(\lambda) \geq  N_{[\varepsilon,\infty]}^{-}(\lambda).
\end{equation}
Choose $a >0$ small and let $\varepsilon = 1/\sqrt{a\lambda}$. From \eqref{eq:weyl-admissible} we deduce the existence of a constant $C(a)  = - C/\log(1+1/a)$, tending to $0$ as $a \to 0$, such that
\begin{equation}
  N(\lambda) \geq  N_{[\varepsilon,\infty]}^{-}(\lambda) \ge \frac{\omega_n}{(2\pi)^n} \lambda^{n/2} \vv(a \lambda) (1 + C(a)).
\end{equation}
We now use the fact that $\vv(\lambda)$ is slowly varying to obtain
\begin{equation}
\liminf_{\lambda \to \infty} \frac{  N(\lambda)}{\lambda^{n/2} \vv(\lambda)} \geq \frac{\omega_n}{(2\pi)^n}(1+C(a)).
\end{equation}
By letting $a \to 0$ we conclude the proof of the lower bound.

{ The upper bound is more delicate.} We split $\M$ into three parts:
\begin{equation}
\M = \M_{0}^{\varepsilon_1} \cup \M_{\varepsilon_1}^{\varepsilon_2} \cup \M_{\varepsilon_2}^\infty, 
\qquad 0<\varepsilon_1<\varepsilon_2 < \varepsilon_0/2.
\end{equation}
Consider $a<1$ small and let
\begin{equation}\label{eq:choiceeps1eps2}
  \varepsilon_1(\lambda) := \frac{1}{10\sqrt{\lambda}}, \qquad 
  \varepsilon_2(\lambda) := \frac{1}{\sqrt{a\lambda}}.
\end{equation}
The factor $10$ above has been chosen in order to be able to apply Lemma \ref{lem:hardy}, whence $N_{[0,\varepsilon_1(\lambda)]}^{-}(\lambda)=0$. By Neumann bracketing, we obtain
\begin{equation}\label{eq:1Neumann}
N(\lambda) \leq 
N_{[\varepsilon_1(\lambda),\varepsilon_2(\lambda)]}^{+}(\lambda) +N_{[\varepsilon_2(\lambda),\infty]}^{+}(\lambda).
\end{equation}

In Proposition \ref{p:intermediatecouche} below, we show that, thanks to the slowly varying assumption, the first term in \eqref{eq:1Neumann} gives a negligible contribution at infinity, more precisely
\begin{equation}\label{eq:contr1}
\lim_{\lambda \to \infty} \frac{N_{[\varepsilon_1(\lambda),\varepsilon_2(\lambda)]}^{+}(\lambda)}{\lambda^{n/2}\vv(\lambda)} = 0.
\end{equation}
On the other hand, applying Theorem~\ref{t:weylremainder} to $\M_{\varepsilon_2(\lambda)}^\infty$, we obtain that for all $\lambda >0$
\begin{equation}\label{eq:contr2}
N_{[\varepsilon_2(\lambda),\infty]}^{+}(\lambda)  \leq \frac{\omega_n}{(2\pi)^n} \vol\left(\M_{\varepsilon_2(\lambda)}^{\infty}\right) \lambda^{n/2} \left(1+C(a)\right),
\end{equation}
where $C(a)\to 0$ as $a \to 0$.
Since $\vv$ is slowly varying, we have
\begin{equation}
  \vol\left(\M_{\varepsilon_2(\lambda)}^{\infty}\right) \sim \vv(\lambda).
\end{equation}
Putting together the contributions from \eqref{eq:contr1} and \eqref{eq:contr2}, we finally get
\begin{equation}
  \limsup_{\lambda\to\infty}\frac{N(\lambda)}{\lambda^{n/2}\vv(\lambda)}\le \frac{\omega_n}{(2\pi)^n}(1+C(a)).
\end{equation}
Letting $a \to 0$, we have $C(a) \to 0$, which completes the proof.
\end{proof}

  The following proposition estimates the number of eigenvalues in the intermediate strip $\M_{\varepsilon_1}^{\varepsilon_2}$ close to the singularity. Note that we cannot apply Theorem~\ref{t:weylremainder} to $\M_{\varepsilon_1}^{\varepsilon_2}$ since the latter does not have convex boundary.
  \begin{prop}\label{p:intermediatecouche}
     Let $\M$ be an $n$-dimensional Riemannian manifold with compact metric completion and satisfying Assumption \ref{a:singularity}. There exists a constant $C>0$ such that, for any $0<\varepsilon_1<\varepsilon_2 < \varepsilon_0 /2$, it holds
    \begin{equation}
    N^{\pm}_{[\varepsilon_1,\varepsilon_2]}(\lambda) \leq C \vol(Z_{\varepsilon_1})(\varepsilon_2-\varepsilon_1) \left( \frac{\varepsilon_2}{\varepsilon_1}\right)^{C/2}\lambda^{n/2}, \qquad \forall \lambda >\frac{(\varepsilon_1/\varepsilon_2)^C}{\min\{\varepsilon_1^2,(\varepsilon_2-\varepsilon_1)^2\}}.
    \end{equation}
    Assume furthermore that $\vv(\lambda)$ is slowly varying, and choose $\varepsilon_1 = \frac{1}{10\sqrt{\lambda}}$ and $\varepsilon_2 = \frac{1}{\sqrt{a\lambda}}$ as in \eqref{eq:choiceeps1eps2}, with $a <1$ sufficiently small. Then we have
\begin{equation}
  \lim_{\lambda \to \infty}\frac{N^{\pm}_{[\varepsilon_1(\lambda),\varepsilon_2(\lambda)]}(\lambda)}{\lambda^{n/2}\vv(\lambda)} = 0  .
  \end{equation}
  \end{prop}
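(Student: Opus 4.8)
The statement has two parts. The obstruction in the first is that $\M_{\varepsilon_1}^{\varepsilon_2}\simeq[\varepsilon_1,\varepsilon_2]\times Z$, carrying the metric $g=dx^2+h(x)$ with $\distmb(x,z)=x$, has non-convex boundary, so Theorem~\ref{t:weylremainder} cannot be applied directly. My plan is: (i) replace $g$ by the genuine Riemannian product $g_1:=dx^2+h(\varepsilon_1)$ at the price of a bi-Lipschitz distortion polynomial in $\varepsilon_2/\varepsilon_1$; (ii) compute the spectrum of $g_1$ by separation of variables; (iii) feed in a Buser-type bound for the closed cross-section $Z_{\varepsilon_1}=(Z,h(\varepsilon_1))$. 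For the second part, one substitutes the explicit $\varepsilon_1(\lambda),\varepsilon_2(\lambda)$ and uses slow variation of $\vv$ together with the volume-growth estimate behind Lemma~\ref{l:vol}.

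For step (i): convexity gives $h(x)\le h(\varepsilon_1)$ for $x\ge\varepsilon_1$, while the bound $|\Hess(\distmb)|\le C/\distmb$ from Lemma~\ref{l:basic} (via the Riccati comparison) and the identity $\partial_x h(x)=2\,\Hess(\distmb)|_{TZ_x}$ give $|\partial_x h(x)|\le(2C/x)\,h(x)$, hence $h(x)\ge(\varepsilon_1/x)^{2C}h(\varepsilon_1)$. Thus $(\varepsilon_1/\varepsilon_2)^{2C}g_1\le g\le g_1$ on $\M_{\varepsilon_1}^{\varepsilon_2}$, i.e. $g$ and $g_1$ are $\Lambda$-bi-Lipschitz with $\Lambda:=(\varepsilon_2/\varepsilon_1)^{2C}$. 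A bi-Lipschitz change of metric distorts Rayleigh quotients by at most $\Lambda^{n+1}$ and does not alter $H^1$ or $H^1_0$ (note that $\M_{\varepsilon_1}^{\varepsilon_2}$ has compact closure in $\M$), so by min-max $N^\pm_{[\varepsilon_1,\varepsilon_2]}(\lambda)\le N^{\pm}_{g_1}(\Lambda^{n+1}\lambda)$, where $N^\pm_{g_1}$ denotes the counting function of the product $([\varepsilon_1,\varepsilon_2]\times Z,\,g_1)$. Separation of variables gives $N^{\pm}_{g_1}(\mu)\le(1+(\varepsilon_2-\varepsilon_1)\sqrt\mu/\pi)\,N_{Z_{\varepsilon_1}}(\mu)$, where $N_{Z_{\varepsilon_1}}$ is the counting function of the closed manifold $Z_{\varepsilon_1}$. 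By Gauss's equation, $|\Sec(Z_{\varepsilon_1})|\le C'/\varepsilon_1^2$ (combining $|\Sec(\M)|\le C/\varepsilon_1^2$ with $|\Hess(\distmb)|\le C/\varepsilon_1$), so a Buser-type inequality — classical \cite{Buser}, or the heat-trace argument of Section~\ref{s:heat} applied to $Z_{\varepsilon_1}$ — gives $N_{Z_{\varepsilon_1}}(\mu)\le C\,\vol(Z_{\varepsilon_1})(\mu+\varepsilon_1^{-2})^{(n-1)/2}$ for all $\mu\ge0$.

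Taking $\mu=\Lambda^{n+1}\lambda$, the hypothesis $\lambda>(\varepsilon_1/\varepsilon_2)^C/\min\{\varepsilon_1^2,(\varepsilon_2-\varepsilon_1)^2\}$ forces (after possibly enlarging $C$) both $\mu\ge\varepsilon_1^{-2}$ and $(\varepsilon_2-\varepsilon_1)\sqrt\mu\ge1$, so the two estimates combine to $N^\pm_{[\varepsilon_1,\varepsilon_2]}(\lambda)\le C\,\vol(Z_{\varepsilon_1})(\varepsilon_2-\varepsilon_1)\,\mu^{n/2}=C\,\vol(Z_{\varepsilon_1})(\varepsilon_2-\varepsilon_1)(\varepsilon_2/\varepsilon_1)^{Cn(n+1)}\lambda^{n/2}$, which is the first claim after renaming $C$. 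For the second part, with $\varepsilon_1=\frac1{10\sqrt\lambda}$ and $\varepsilon_2=\frac1{\sqrt{a\lambda}}$ the ratio $\varepsilon_2/\varepsilon_1=10/\sqrt a$ is constant and $\min\{\varepsilon_1^2,(\varepsilon_2-\varepsilon_1)^2\}\asymp\lambda^{-1}$, so for $a$ small enough the hypothesis holds for all large $\lambda$ and the first part gives $N^\pm_{[\varepsilon_1(\lambda),\varepsilon_2(\lambda)]}(\lambda)\le C_a\,\vol(Z_{1/(10\sqrt\lambda)})\,\lambda^{(n-1)/2}$ for some $C_a>0$; it remains to show $\vol(Z_{1/(10\sqrt\lambda)})\,\lambda^{(n-1)/2}=o(\vv(\lambda))$. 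Since $x\mapsto\vol(Z_x)$ is non-increasing (convexity) and decays at most polynomially, \eqref{eq:volproof2} gives $\vol(Z_{1/(10\sqrt\lambda)})\le 20^{C(n-1)}\vol(Z_{2/\sqrt\lambda})$, while
\[
\vol(Z_{2/\sqrt\lambda})\le\sqrt\lambda\int_{1/\sqrt\lambda}^{2/\sqrt\lambda}\vol(Z_x)\,dx=\sqrt\lambda\,(\vv(\lambda)-\vv(\lambda/4));
\]
since $\vv$ is slowly varying, $\vv(\lambda)-\vv(\lambda/4)=o(\vv(\lambda))$, hence $\vol(Z_{1/(10\sqrt\lambda)})\lambda^{(n-1)/2}\le 20^{C(n-1)}\lambda^{n/2}(\vv(\lambda)-\vv(\lambda/4))=o(\lambda^{n/2}\vv(\lambda))$, and the conclusion follows.

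The main obstacle is step (iii): since there is no lower bound on the injectivity radius of the cross-sections $Z_{\varepsilon_1}$, one cannot invoke the sharp remainder of Theorem~\ref{t:traceheat} but must resort to a Weyl bound depending only on a Ricci lower bound (Li--Yau plus volume comparison, as in Section~\ref{s:heat}, or classical Buser), which is precisely why the estimate carries the factor $\varepsilon_2-\varepsilon_1$ and the polynomial loss $(\varepsilon_2/\varepsilon_1)^{C/2}$ instead of the exact volume $\vol(\M_{\varepsilon_1}^{\varepsilon_2})$. The rest is bookkeeping: unifying the dimensional exponents (the bi-Lipschitz power, the cross-sectional dimension $n-1$, the curvature constant of Lemma~\ref{l:basic}) into the single constant $C$ appearing both in the threshold on $\lambda$ and in the exponent of $\varepsilon_2/\varepsilon_1$, which is harmless because $\varepsilon_1/\varepsilon_2<1$ makes a larger $C$ only strengthen the hypothesis.
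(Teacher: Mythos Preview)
Your argument is correct and follows essentially the same strategy as the paper---freezing the metric at $x=\varepsilon_1$ and comparing Rayleigh quotients---but diverges in two places. First, after reducing to the product $(I\times Z,g_1)$, the paper applies Theorem~\ref{t:weylremainder} directly to this product: the boundary is totally geodesic, the curvature and Hessian bounds are immediate, and the missing ingredient $\inj(Z,h(\varepsilon_1))\gtrsim\varepsilon_1$ is supplied by Lemma~\ref{l:injsubmanifold} (a CAT($K$)-type bound for closed submanifolds with bounded second fundamental form). So your remark that ``there is no lower bound on the injectivity radius of the cross-sections'' is not quite accurate---the paper does have one---but your alternative route via separation of variables and a Ricci-only Buser bound on $Z_{\varepsilon_1}$ is perfectly valid and avoids that auxiliary lemma entirely. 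Second, for the slowly-varying conclusion the paper writes $\vol(Z_{\varepsilon_1})=\vv'(1/\varepsilon_1^2)/(2\varepsilon_1^3)$ and invokes Lemma~\ref{lem:lamperti} to get $\lambda\vv'(\lambda)/\vv(\lambda)\to0$; your integral trick $\vol(Z_{2/\sqrt\lambda})\le\sqrt\lambda\,(\vv(\lambda)-\vv(\lambda/4))=o(\sqrt\lambda\,\vv(\lambda))$ is more elementary and sidesteps the monotonicity hypothesis of that lemma. Both routes yield the same polynomial loss in $\varepsilon_2/\varepsilon_1$; the paper's measure/gradient splitting is marginally sharper than your full bi-Lipschitz bound, but this is immaterial after renaming constants.
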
  
\begin{proof}
  It is sufficient to prove the proposition for the Neumann case.
Let $I=[\varepsilon_1,\varepsilon_2]$. Close to the metric boundary, one has that $\M_{\varepsilon_1}^{\varepsilon_2} = I \times Z$ and
\begin{equation} 
g = dx^2 + h(x),
\end{equation}
where $h(x)$ is a one-parameter family of Riemannian metrics on the fixed closed hypersurface $Z$.  Let $Q$ and $R$ be the corresponding quadratic form and Rayleigh quotient, i.e.,
  \begin{equation}
    Q(u) = \int_{ I \times Z} |\nabla^g u|^2\,d\mu_g, \qquad R(u) = \frac{Q(u)}{\|u\|^2_{L^2(I\times Z, d\mu_g)}}, \qquad u \in C^\infty(I\times Z).
  \end{equation}
The idea is to control the Rayleigh quotient in terms of the one of a simpler metric. To this purpose, let  $g_1$ be the metric on $I \times Z$ obtained by freezing $x = \varepsilon_1$, that is
\begin{equation}
g_1 = dx^2 + h(\varepsilon_1).
\end{equation}
Fix a smooth measure $dz$ on $Z$. Observe that $d\mu_g = e^{2\theta(x,z)} dx dz$ for a smooth function $\theta : I \times Z \to \R$, and that $\tr \Hess(\delta) = 2\partial_x \theta$. Therefore, since $\delta(x,z)= x$ on $I\times Z$, we have
\begin{equation}
-\frac{C}{x} \leq 2\partial_x\theta \leq 0,
\end{equation}
for some constant $C>0$ depending only on $n$. It follows that on $I\times Z$ it holds
\begin{equation}\label{eq:ineqmeasures}
\left(\frac{\varepsilon_1}{\varepsilon_2}\right)^C  d\mu_{g_1} \leq d\mu_g \leq d\mu_{g_1},
\end{equation}
as measures. Inequality \eqref{eq:ineqmeasures} will be used to estimate the behaviour of the measure in the Rayleigh quotient. For what concerns the norm of the gradient, observe that, by convexity, the family $x\mapsto h(x)$ is decreasing, which implies 
\begin{equation}\label{eq:inequalitynorms}
|\nabla^{g} u|^2  \ge |\nabla^{g_1} u |^2.
\end{equation}

It follows from \eqref{eq:ineqmeasures} and \eqref{eq:inequalitynorms} that, denoting with $R_1$ the Rayleigh quotient of the Riemannian manifold $(I\times Z, g_1)$, one has
\begin{equation}
R(u) \geq \left(\frac{\varepsilon_1}{\varepsilon_2}\right)^C R_1(u), \qquad \forall u \in C^\infty(I\times Z).
\end{equation}
By the min-max characterization of eigenvalues, it follows that
\begin{equation}\label{eq:splittamento}
N^+_{(I \times Z,g)}(\lambda) \leq N^+_{(I \times Z, g_1)}\left(\left(\frac{\varepsilon_2}{\varepsilon_1}\right)^C \lambda\right), \qquad \forall \lambda >0.
\end{equation}
We will estimate the r.h.s.\ of \eqref{eq:splittamento} through Theorem \ref{t:weylremainder}. To do so, notice that $(I\times Z,g_1)$ is the product of $(I,dx^2)$ and $(Z,h_1)$, with $h_1 := h(\varepsilon_1)$. As such, it is a compact Riemannian manifold with totally geodesic, and thus convex, boundary. Its sectional curvature is bounded by the one of the factor $(Z,h_1)$. By Gauss' equation, there exists a constant $C$ (depending only on the constants appearing in Assumption \ref{a:singularity} and hence not on the choice of $\varepsilon_1$ and $\varepsilon_2$) such that
\begin{equation}
|\Sec(I \times Z, g_1)| \leq |\Sec(Z,h_1)| \leq \frac{C}{\varepsilon_1^2}.
\end{equation}
Furthermore, the injectivity radius from the boundary of $(I \times Z,g_1)$ is
\begin{equation}
\inj_{\partial}(I\times Z,g_1) = \frac{1}{2}(\varepsilon_2-\varepsilon_1).
\end{equation}
Finally, the injectivity radius of $(I\times Z,g_1)$ is equal to the one of $(Z,h_1)$. The latter is a submanifold of bounded second fundamental form in a Riemannian manifold of bounded sectional curvature and injectivity radius. Its injectivity radius can be bounded from below in terms of the aforementioned quantities and its distance from the metric boundary, as stated in Lemma \ref{l:injsubmanifold}. Using Assumption \ref{a:singularity}, we deduce the existence of $C>0$, not depending on the choice of $\varepsilon_1,\varepsilon_2$, such that
\begin{equation}
\inj(I\times Z, g_1) = \inj(Z,h_1) \geq C^{-1} \varepsilon_1.
\end{equation}
We can now apply Theorem \ref{t:weylremainder} to $(I\times Z,g_1)$, yielding the existence of a constant $C>0$, not depending on the choice of $\varepsilon_1,\varepsilon_2$, such that
\begin{equation}\label{eq:eqfinalll}
N^+_{[\varepsilon_1,\varepsilon_2]}(\lambda) \leq C  \vol(I\times Z,g_1) \left(\left(\frac{\varepsilon_2}{\varepsilon_1}\right)^{C}\lambda\right)^{n/2}, \qquad \forall \lambda > \frac{(\varepsilon_1/\varepsilon_2)^C}{\min\{\varepsilon_1^2,(\varepsilon_2-\varepsilon_1)^2\}}.
\end{equation}
This proves the first part of the proposition, as $\vol(I\times Z, g_1) = \vol(Z_{\varepsilon_1})(\varepsilon_2-\varepsilon_1)$. 

To prove the second part of the statement, letting $Z_x=\{\delta =x\}$ and recalling the definition $\vv(1/x^2) = \vol(\M_x^\infty)$, we deduce 
\begin{equation}
\vol(\M_x^\infty) = \int_x^\infty \vol(Z_x)\, dx \qquad \Rightarrow \qquad \vol(Z_{\varepsilon_1}) = \frac{\vv'(1/\varepsilon_1^2)}{2 \varepsilon_1^3}.
\end{equation}
Let now choose $\varepsilon_1 = \frac{1}{10\sqrt{\lambda}}$ and $\varepsilon_2 = \frac{1}{\sqrt{a \lambda}}$, for $a<1$. Notice that for $a$ sufficiently small, depending on the given value of $C$, then the condition for the validity of \eqref{eq:eqfinalll} is verified for all $\lambda$. We have, in this case, renaming the constants (which may now depend on $a$),
\begin{equation}
\frac{N_{[\varepsilon_1(\lambda),\varepsilon_2(\lambda)]}^+(\lambda)}{\lambda^{n/2} \vv(\lambda)} \leq C(a) \frac{\lambda \vv'(100\lambda)}{\vv(\lambda)} , \qquad \forall \lambda >0.
\end{equation}
Since $\vv$ is slowly varying the r.h.s.\ tends to zero (use Lemma~\ref{lem:lamperti} given below).
\end{proof}

\noindent The next result is an application of \cite[Thm.\ 1.7.2 and Prop.\ 1.5.8]{regularvariation}, see also \cite[Thm.\ 2]{Lamperti1958}.
\begin{lemma}\label{lem:lamperti}
  Let $\vv:\R_+\to \R_+$ be a slowly varying function of class $C^1$ such that $\lambda\mapsto\lambda^a \vv'(\lambda)$ is monotone for some $a\geq 0$. Then, 
  \begin{equation}\label{eq:lamperti}
    \lim_{\lambda\to \infty} \frac{\lambda \vv'(\lambda)}{\vv(\lambda)} = 0.
  \end{equation}
\end{lemma}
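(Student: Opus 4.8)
The plan is to invoke the structure theory of slowly varying functions, specifically the Karamata representation theorem, to rewrite $\vv$ in a form from which the conclusion follows directly. Recall that a slowly varying function $\vv$ of class $C^1$ admits a representation of the form $\vv(\lambda) = c(\lambda)\exp\left(\int_{1}^{\lambda} \frac{\eta(s)}{s}\, ds\right)$, where $c(\lambda) \to c_\infty \in (0,\infty)$ and $\eta(s) \to 0$ as $s \to \infty$. Differentiating this identity gives $\vv'(\lambda) = \vv(\lambda)\left(\frac{c'(\lambda)}{c(\lambda)} + \frac{\eta(\lambda)}{\lambda}\right)$, so that $\frac{\lambda \vv'(\lambda)}{\vv(\lambda)} = \frac{\lambda c'(\lambda)}{c(\lambda)} + \eta(\lambda)$. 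The second summand tends to zero by construction, so the issue reduces to controlling the "slowly varying remainder'' $c(\lambda)$, which may a priori oscillate; this is precisely where the monotonicity hypothesis on $\lambda^a\vv'(\lambda)$ enters.

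The cleaner route, and the one I would actually carry out, is to bypass the representation theorem's full strength and use directly the monotonicity assumption together with the definition of slow variation. Since $\lambda^a\vv'(\lambda)$ is monotone, $\vv'$ is eventually of one sign; because $\vv > 0$ is slowly varying (hence cannot decay to $0$ like a negative power, nor blow up like a positive power), $\vv$ cannot be eventually decreasing to a positive limit with a nonintegrable derivative, so one argues $\vv' \geq 0$ eventually. Now for fixed $A > 1$, monotonicity of $s^a\vv'(s)$ on $[\lambda, A\lambda]$ gives a two-sided comparison: in the decreasing case, $s^a\vv'(s) \geq (A\lambda)^a\vv'(A\lambda)$ on that interval, hence
\begin{equation}
\vv(A\lambda) - \vv(\lambda) = \int_{\lambda}^{A\lambda} \vv'(s)\, ds \geq (A\lambda)^a \vv'(A\lambda)\int_{\lambda}^{A\lambda} s^{-a}\, ds \geq c_{A,a}\, \lambda\, \vv'(A\lambda),
\end{equation}
where $c_{A,a} > 0$ depends only on $A$ and $a$ (the integral $\int_{\lambda}^{A\lambda}s^{-a}ds$ is comparable to $\lambda^{1-a}$ when $a \neq 1$, and to $\lambda\log A \cdot \lambda^{-1}$ handled separately when $a=1$). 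Dividing by $\vv(\lambda)$ and using that $\vv(A\lambda)/\vv(\lambda) \to 1$ shows $\limsup_{\lambda\to\infty} \frac{A\lambda\, \vv'(A\lambda)}{\vv(A\lambda)}\cdot\frac{\vv(A\lambda)}{\vv(\lambda)} \leq \text{const}$, and after a change of variable $\mu = A\lambda$ one gets that $\frac{\mu\vv'(\mu)}{\vv(\mu)}$ is bounded; feeding this back with $A \to 1^{+}$ (or $A \to \infty$) forces the limsup to be $0$. The increasing case of $s^a\vv'(s)$ is symmetric, reversing the inequality.

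The main obstacle I anticipate is the bookkeeping in the borderline exponent $a = 1$ and, more substantively, ruling out pathological oscillation of $\vv'$ without the monotonicity crutch — which is exactly why the hypothesis is imposed, so in practice the obstacle is just organizing the monotonicity comparison cleanly across the cases $a < 1$, $a = 1$, $a > 1$ and the two signs of the trend. An alternative, possibly shorter, write-up simply cites \cite[Thm.~1.7.2]{regularvariation} (the monotone density theorem: if $\vv$ is slowly varying, $\vv(\lambda) = \vv(1) + \int_1^\lambda \psi(s)\,ds$, and $\psi$ is monotone, then $\lambda\psi(\lambda)/\vv(\lambda) \to 0$) together with \cite[Prop.~1.5.8]{regularvariation} to reduce the hypothesis "$\lambda^a\vv'(\lambda)$ monotone'' to "$\vv'$ monotone'' after absorbing the power; since the paper explicitly frames the lemma as "an application of'' those two references, I would present the proof in that citation-driven style, spending the bulk of the text only on verifying that the hypotheses of those theorems are met under our assumption.
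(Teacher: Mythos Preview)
Your final paragraph is exactly the paper's approach: the paper gives no argument at all beyond citing \cite[Thm.\ 1.7.2 and Prop.\ 1.5.8]{regularvariation} and \cite[Thm.\ 2]{Lamperti1958}, so your citation-driven write-up matches it verbatim.

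Your direct argument in the middle paragraph goes further than the paper and is essentially sound, but one step is wrong as written: the claim that ``$\vv' \geq 0$ eventually'' is false. Slowly varying functions can be eventually strictly decreasing (take $\vv(\lambda) = 1/\log\lambda$, for which $\lambda^a\vv'(\lambda) = -\lambda^{a-1}/(\log\lambda)^2$ is monotone for every $a$). The fix is painless: once $\lambda^a\vv'(\lambda)$ is monotone, $\vv'$ is eventually of one sign, and in the case $\vv' \leq 0$ you run the identical integral comparison on $\vv(\lambda) - \vv(A\lambda) = \int_\lambda^{A\lambda} |\vv'(s)|\,ds$, bounding $s^a|\vv'(s)|$ by its value at whichever endpoint the monotonicity of $s^a\vv'(s)$ dictates. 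Also, the ``feeding back with $A\to 1^+$'' step is unnecessary: once you have $\vv(A\lambda)/\vv(\lambda) - 1 \geq c_{A,a}\,\lambda\vv'(\cdot)/\vv(\lambda)$ (or the analogous inequality with the other endpoint), slow variation already forces the right-hand side to $0$ for fixed $A$, and a change of variable finishes.
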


  \begin{rmk}
    A natural relaxation of the slowly varying condition in Theorem~\ref{t:weylexact}, is to require on $\vv$ to be $k$-regularly varying for some $k
    >0$. Namely,
    \begin{equation}
      \lim_{\lambda\to \infty}\frac{\vv(a\lambda)}{\vv(\lambda)} = a^{k}, \qquad \forall a>0.
    \end{equation}
    Indeed, $k=0$ corresponds to the slowly varying case. 
    The  same argument of proof of Theorem~\ref{t:weylexact} yields
\begin{equation}
a^k(1-C(a)) \leq \liminf_{\lambda \to \infty} \frac{N(\lambda)}{\tfrac{\omega_n}{(2\pi)^n}\lambda^{n/2}\vv(\lambda)} \leq  \limsup_{\lambda \to \infty} \frac{N(\lambda)}{\tfrac{\omega_n}{(2\pi)^n}\lambda^{n/2}\vv(\lambda)}  \leq a^k(1+C(a)) + \frac{k}{C(a)},
\end{equation}
where $C(a)>0$ is a constants such that $C(a) \to 0$ as $a \to 0$. One can see that, unless $k=0$ (i.e.\ the case of slowly varying $\vv$), one obtains a result that is equivalent to Theorem \ref{t:weyl}. We do not know whether this is a limit of our approach or, on the contrary, there is no exact Weyl's law in the case of regularly varying volume functions.
  \end{rmk}

\subsection{Metrics with prescribed Weyl's law}\label{s:prescribed}

We prove the following converse to Theorem \ref{t:weylexact}.

\begin{theorem}\label{t:converse}
Let $\KK$ be an $n$-dimensional compact manifold, $S\subset \KK$ be a closed submanifold, and $\vv : \R_+ \to \R_+$ be a non-decreasing slowly varying function.  Then,  there exists a Riemannian structure on $\KK$, singular at $S$, such that
\begin{equation}\label{e:exact-prescribed}
\lim_{\lambda \to \infty} \frac{N(\lambda)}{\lambda^{n/2} \vv(\lambda)} = \frac{\omega_n}{(2\pi)^n}.
\end{equation}
\end{theorem}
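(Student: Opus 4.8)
The plan is to reduce everything to Theorem~\ref{t:weylexact}. It suffices to build a Riemannian metric $g$ on $N\setminus S$, singular at $S$, with compact metric completion, satisfying Assumption~\ref{a:singularity}, and such that the associated volume function $\vv_g(\lambda):=\vol(\M^\infty_{1/\sqrt\lambda})$ is asymptotic to the prescribed $\vv(\lambda)$ as $\lambda\to\infty$. Indeed, since $\vv$ is slowly varying and $\vv_g\sim\vv$, the function $\vv_g$ is slowly varying too, so Theorem~\ref{t:weylexact} gives $N(\lambda)\sim\frac{\omega_n}{(2\pi)^n}\lambda^{n/2}\vv_g(\lambda)\sim\frac{\omega_n}{(2\pi)^n}\lambda^{n/2}\vv(\lambda)$, which is \eqref{e:exact-prescribed}. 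I shall describe the case $\vv(\lambda)\to\infty$; when $\vv$ has a finite positive limit the argument is analogous and simpler, $\vv$ being then replaceable by any fixed standard profile with the same limit, with only the additive constant in the volume to be matched.

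For the construction I would first fix a tubular neighbourhood of $S$ and, via the normal exponential chart of an auxiliary smooth metric, identify a punctured neighbourhood of $S$ with $(0,\varepsilon_0)\times Z$, where $Z$ is a fixed closed $(n-1)$-dimensional manifold (the unit normal bundle of $S$). On this collar I take a warped-product metric $g=dx^2+f(x)^2 g_Z$, with $g_Z$ a fixed metric on $Z$ and $f:(0,\varepsilon_0)\to\R_+$ a smooth positive profile, glued smoothly to a fixed metric on the rest of $N\setminus S$ which I prescribe to be a product near $\{x=\varepsilon_0\}$, so the gluing is seamless and $f$ can be kept monotone. This is of the normal form \eqref{eq:metricnormalform}; moreover $\distmb(x,z)=x$, the metric boundary is $Z$, and $\vol(Z_x)=c_Z f(x)^{n-1}$ with $c_Z:=\vol(Z,g_Z)$, whence
\begin{equation}
\vol\!\left(\M^\infty_x\right)=\vol\!\left(\M^\infty_{\varepsilon_0}\right)+c_Z\int_x^{\varepsilon_0}f(y)^{n-1}\,dy.
\end{equation}
After replacing $\vv$ by an asymptotically equivalent smooth representative (a standard reduction in the theory of regular variation, cf.\ \cite{regularvariation}) and using that $\vv$ is non-decreasing, so $\vv'\geq 0$, I would simply \emph{define}, for $\varepsilon_0$ small,
\begin{equation}
f(x)^{n-1}:=\frac{2}{c_Z}\,x^{-3}\,\vv'(x^{-2}).
\end{equation}
Then $c_Z\int_x^{\varepsilon_0}f(y)^{n-1}\,dy=-\int_x^{\varepsilon_0}\tfrac{d}{dy}\vv(y^{-2})\,dy=\vv(x^{-2})-\vv(\varepsilon_0^{-2})$ by the fundamental theorem of calculus, so $\vol(\M^\infty_x)=\vv(x^{-2})+\mathrm{const}$ and therefore $\vv_g(\lambda)\sim\vv(\lambda)$, as required.

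It then remains to verify Assumption~\ref{a:singularity} near the metric boundary. Part (a) is immediate, as $\distmb(x,z)=x$ is smooth. Part (b) — convexity, i.e.\ $x\mapsto f(x)$ non-increasing — reduces, through the defining relation, to the eventual monotonicity of $x\mapsto x^{-3}\vv'(x^{-2})$, which can be secured by the choice of the smooth representative of $\vv$ (and dictates how small $\varepsilon_0$ must be). Part (d) comes for free: since the metric is a warped product near the singularity, Proposition~\ref{prop:inj-strict-convex} yields $\inj\geq C\distmb$ automatically. The crux is part (c), the bound $|\Sec|\leq C\distmb^{-2}$: for $g=dx^2+f(x)^2 g_Z$ the sectional curvatures are controlled by $|f''/f|$, $(f'/f)^2$ and $|\Sec(Z,g_Z)|/f^2$, so one needs $|(\log f)'|\lesssim x^{-1}$, $|(\log f)''|\lesssim x^{-2}$ and $f(x)\gtrsim x$; through the defining relation these translate into bounds on the first two logarithmic derivatives of $\lambda\mapsto\vv'(\lambda)$ together with a polynomial lower bound of the form $\vv'(\lambda)\gtrsim\lambda^{-(n+2)/2}$ — and even $\vv'(\lambda)\gg\lambda^{-3/2}$, which moreover forces $f(x)\to\infty$, i.e.\ a genuine singularity at $S$ — all of which can be arranged by choosing the smooth representative of $\vv$ suitably (e.g.\ with $\vv'$ decaying no faster than a fixed negative power of $\lambda$). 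Finally the metric completion is compact, because the only incompleteness sits over $S$, occurs at finite $\distmb$, and the metric boundary $Z$ is compact; hence Theorem~\ref{t:weylexact} applies and gives \eqref{e:exact-prescribed}. The Hardy inequality near the singularity provided by (b)--(c) additionally yields essential self-adjointness of the Laplace--Beltrami operator, see Remark~\ref{r:ess-adj}.

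The main obstacle, I expect, is precisely this regular-variation bookkeeping: producing, from a merely continuous non-decreasing slowly varying $\vv$, a smooth profile $f$ that simultaneously reproduces the prescribed volume asymptotics and meets all the quantitative curvature, convexity and injectivity constraints of Assumption~\ref{a:singularity} at once. The remaining ingredients — the tubular-neighbourhood set-up, the smooth gluing, the identity $\vv_g\sim\vv$, and the compactness of the metric completion — should be routine.
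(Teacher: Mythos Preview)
Your overall strategy matches the paper's exactly: build a warped product $g=dx^2+f(x)^2g_Z$ on a collar of $S$, choose $f^{n-1}(x)=\tfrac{2}{c_Z}x^{-3}\vv'(x^{-2})$ so that $\vol(\M^\infty_{1/\sqrt\lambda})\sim\vv(\lambda)$, verify Assumption~\ref{a:singularity}, and invoke Theorem~\ref{t:weylexact}. The geometric reductions (O'Neill formulas, Proposition~\ref{prop:inj-strict-convex} for the injectivity radius, compactness of the completion) are all as in the paper.

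The gap is the step you yourself flag as the ``main obstacle''. You assert that the required derivative bounds on $\vv$ ``can be arranged by choosing the smooth representative of $\vv$ suitably'', but this is \emph{not} a standard smoothing of slowly varying functions. The Karamata representation gives a smooth $\vv$ with $\lambda\vv'(\lambda)/\vv(\lambda)\to 0$, but your computation of $f'/f$ and $f''/f$ requires control of $\lambda\vv''(\lambda)/\vv'(\lambda)$ and $\lambda^2\vv'''(\lambda)/\vv'(\lambda)$, and these quantities can be unbounded even for strictly increasing smooth slowly varying $\vv$ (e.g.\ $\vv(\lambda)=2\log\lambda+\sin\log\lambda$). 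There is no generic ``suitable choice'' that fixes this.

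The paper resolves this via the de Haan class machinery: first, a result of Balkema (\cite[Appendix~B]{deHaan}) says every non-decreasing slowly varying function is asymptotic to a function in the de Haan class $\Pi$; second, \cite[Thm.~3.7.7]{regularvariation} shows every $\Pi$-function is asymptotic to a smooth one in $S\Pi$, with $\vv'>0$ eventually and the precise asymptotics $\lambda\vv^{(m+1)}(\lambda)/\vv'(\lambda)\to(-1)^m m!$ for all $m$. With this replacement one computes $f'/f\sim -1/((n-1)x)$ and $f''/f\sim n/((n-1)^2x^2)$ explicitly, which simultaneously yields strict convexity~(b), the curvature bound~(c), and $f(x)\to\infty$. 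This two-step passage through $\Pi$ and $S\Pi$ is the missing ingredient in your argument.
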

\begin{proof}
The idea is to build a non complete Riemannian structure on $\KK \setminus S$, of warped-product type near $S$, with respect to some function $f$, which has to be chosen carefully, so that $\vol(M_{1/\sqrt{\lambda}})\sim \vv(\lambda)$ and Assumption~\ref{a:singularity} is satisfied. Indeed, this will allow to apply Theorem~\ref{t:weylexact}, and thus to obtain \eqref{e:exact-prescribed}. To this purpose, one needs to control in a precise way the asymptotic behaviour of the quantities $\lambda \vv''(\lambda)/\vv'(\lambda)$ and $\lambda^2 \vv^{(3)}(\lambda)/v'(\lambda)$; However, this is not possible for general slowly varying functions.\footnote{For all examples of monotone slowly varying function given at the beginning of this section, these quantities actually admit a finite limit. An example of strictly monotone slowly varying function for which $\lambda \vv''(\lambda)/\vv'(\lambda)$ is unbounded and does not have a limit is $\vv(\lambda) = 2\log\lambda+\sin\log\lambda$.} We tackle this problem by exploiting the theory of regular variation to replace $\vv$ with a more tame slowly varying function with the desired asymptotics at infinity. 

We recall from \cite[Ch.\ 3]{regularvariation} the following definitions. The de Haan class $\Pi$ is the set of those measurable functions $\phi : \R_+\to\R_+$ for which there exists a slowly varying function $\ell:\R_+\to \R_+$, and $c>0$ such that 
\begin{equation}
\lim_{\lambda \to \infty} \frac{\phi(a\lambda)-\phi(\lambda)}{\ell(\lambda)} = c\log a, \qquad \forall a>0.
\end{equation}
One can see that $\Pi$ is a (strict) subset of slowly varying functions \cite[p.\ 164]{regularvariation}. The smooth de Hann class $S\Pi$ is the set of those smooth functions $\phi : \R_+\to\R_+$ that satisfy
\begin{equation}\label{eq:SP}
\lim_{\lambda \to \infty} \frac{\lambda^{m} \phi^{(1+m)}(\lambda)}{\phi^{'}(\lambda)} = (-1)^m m!, \qquad \forall m \in \N.
\end{equation}
It holds $S\Pi \subset \Pi$, \cite[p.\ 165]{regularvariation}.

By \cite[Appendix B]{deHaan}, any non-decreasing slowly varying function $\vv$ is asymptotic to a de Haan function. Furthermore, by a smoothing result \cite[Thm.\ 3.7.7]{regularvariation}, any de Haan function is asymptotic to a smooth de Haan function, which we denote with the same symbol $\vv \in S\Pi$. Moreover, it follows from the proof of \cite[Thm.\ 3.7.7]{regularvariation} that $\vv'(\lambda)>0$ for large $\lambda$. Thus, we assume that $\vv$ is smooth, strictly increasing, and satisfies \eqref{eq:SP}.

We proceed with the construction in the case where $S$ is a submanifold of codimension $\neq 1$ or it is one-sided. The case of a two-sided hypersurface follows with trivial modifications. Choose a tubular neighborhood $\mathcal{O}\subset \KK$ of $S$ such that $\mathcal{O}\setminus S = (0,2) \times Z$, for a closed hypersurface $Z$. Fix a metric $\hat{g}$ on $Z$ and set
\begin{equation}\label{eq:warped}
g|_{\mathcal{O}} = dx^2 + f^2 \hat{g},
\end{equation}
where $f : (0,2) \to \R_+$ is a smooth function to be chosen later and meant to explode as $x$ tends to $0$. Extend $g$ to a smooth Riemannian metric on the whole $\M:=\KK \setminus S$, by preserving \eqref{eq:warped} on the neighborhood $(0,1) \times Z$.

By construction, $(\M,g)$ has compact metric completion and $\distmb(x,z)= x$ for $(x,z) \in (0,1)\times Z$. The level sets of $\distmb$ close to the metric boundary are diffeomorphic to $Z$ and (a) of Assumption \ref{a:singularity} is verified. Define $f$ in such a way that $\vol(M_{1/\sqrt{\lambda}}) \sim \vv(\lambda)$, by setting
\begin{equation}\label{eq:choicef}
f(x)^{n-1} :=  \frac{2}{\vol(Z,\hat{g})} \frac{\vv'(1/x^2)}{ x^{3}}, \qquad x\ \in (0,1).
\end{equation}
Since $\vv$ is strictly increasing, $f>0$ and its only singularity is at $x=0$.

Let us verify that $(\M,g)$ satisfies Assumption~\ref{a:singularity}. The projection on the first factor $\pi : (0,1) \times Z \to (0,1)$ of the warped product \eqref{eq:warped} is a Riemannian submersion with leaves $(Z,\hat{g})$. By O'Neill formulas \cite[9.29, 9.104]{Besse}, the sectional curvatures are:
\begin{align}
K(U,V) = \frac{1}{f^2}\hat{K}(U,V)- \left(\frac{f'}{f}\right)^2, \qquad
K(X,U)  = - \frac{f''}{f}, \qquad
K(X,Y) = 0.
\end{align}
Here, $U,V$ are orthonormal vectors tangent to the fibers $Z$ of the submersion, $X, Y$ are unit vectors tangent to the base $(0,1)$, and $\hat{K}$ is the sectional curvature of $(Z,\hat{g})$. 
Finally, the hypersurfaces $\{\distmb = x\}$, for $x\in (0,1)$, have as their second fundamental form
\begin{equation}
\Hess(\delta)(U,V) = 
\frac{f'}{f} g(U,V).
\end{equation}
It is then clear that the quantities controlling the behavior of the geometric invariants of $g$ close to the metric boundary (i.e.\ as $x \to 0$) are $f'/f$ and $f''/f$. Thanks to the fact that $\vv \in S\Pi$, we are able to compute their asymptotics as $x\to 0$. To this purpose, set
\begin{equation}
\xi_m(x):= \frac{\vv^{(m+1)}(1/x^2)}{x^{2 m} \vv'(1/x^2)}, \qquad m\geq 1.
\end{equation}
Using \eqref{eq:SP}, we have $\xi_m(x) \to (-1)^m m!$ as $x \to 0$. Thus, using \eqref{eq:choicef}, we have, as $x \to 0$,
\begin{align}
\frac{f'(x)}{f(x)} & = - \frac{3 + 2\xi_1(x)}{(n-1) x} \sim - \frac{1}{(n-1) x}, \\
\frac{f''(x)}{f(x)} & = \frac{3n+6 -4(n-2)\xi_1(x)^2 + 4(n-1)\xi_2(x) + 6(n+1)\xi_1(x)}{(n-1)^2 x^2} \sim \frac{n}{(n-1)^2 x^2}.
\end{align}
Hence, Assumption~\ref{a:singularity} is verified and we can apply Theorem \ref{t:weylexact} to $(\M,g)$.
\end{proof}

\begin{rmk}\label{r:ess-adj}
The Laplace-Beltrami operator of the structure built in the proof of Theorem \ref{t:converse}, with domain $C^\infty_c(\KK\setminus S)$, is essentially self-adjoint in $L^2(\KK,d\mu_g)$. Indeed, $\Delta \distmb = (n-1)f' /f$, and as a consequence of the curvature estimates obtained above one has
\begin{equation}
V_{\mathrm{eff}} := \left( \frac{\Delta \distmb}{2}\right)^2 + \left(\frac{\Delta \distmb}{2}\right)^\prime \geq \frac{3}{4\distmb^2}\left(1- \frac{1}{\log \distmb^{-1}}\right),
\end{equation}
for small $\delta$, where $^\prime$ denotes the derivative in the direction of $\nabla\delta$. This estimate allows to apply the essential-self adjointness criterion of \cite{QC}, combined with the improvement of the constant obtained in \cite{Nenciu}. We omit the details. 
\end{rmk}

\section{Concentration of eigenfunctions}
\label{sec:concentration}

Under the assumptions of Theorem \ref{t:weyl}, it holds that $N(\lambda)\asymp \lambda^{n/2} \vol(\M_{1/\sqrt{\lambda}}^\infty)$. Here, $f(\lambda)\asymp g(\lambda)$ means that the ratio $f(\lambda)/g(\lambda)$ is uniformly bounded above and below by positive constants for $\lambda$ large enough. 
In this section, we show that under the additional assumption $\vol(\M) = \infty$, eigenfunctions concentrate at the metric boundary of $\M$. 

We recall that a subset $\SSS \subseteq \N$ has density $a \in [0,1]$ if
\begin{equation}
\lim_{{\ell} \to \infty} \frac{1}{{\ell}} \sum_{k=0}^{{\ell}-1} \mathbbold{1}_{\SSS}(k) =a.
\end{equation}

\begin{theorem}\label{t:localization}
Let $\M$ be an $n$-dimensional Riemannian manifold such that the Laplace-Beltrami operator $\Delta$ has discrete spectrum, and
\begin{equation}
\lim_{\lambda \to \infty} \frac{N(\lambda)}{\lambda^{n/2}} = \infty.
\end{equation}
Let $\{\phi_i\}_{i\in \N}$, be a complete set of normalized eigenfunctions of $-\Delta$, associated with eigenvalues $\lambda_i$, arranged in non-decreasing order. Then, there exists a density one subset $\SSS \subseteq \N$ such that for any compact $U$ it holds
\begin{equation}
\lim_{\substack{i \to \infty \\ i \in \SSS}} \int_U |\phi_i|^2d\mu_g = 0.
\end{equation}
\end{theorem}
\begin{proof}
Fix a compact set $U$. Let $E$ denote the heat kernel of $\Delta$ and, for $i\in \N$, $a_{i}(U) := \int_U  |\phi_i|^2d\mu_g$. Since the heat kernel is local, we have
\begin{equation}
t^{n/2} \sum_{i=1}^\infty e^{-t\lambda_i} a_{i}(U) = t^{n/2} \int_U E(t,q,q)d\mu_g(q)\sim c,{ \qquad t\to 0,}
\end{equation}
for some constant $c>0$. By the standard Karamata theorem, it holds
\begin{equation}
\sum_{\lambda_i \leq \lambda} a_{i}(U) \sim \frac{c}{\Gamma(n/2+1)} \lambda^{n/2}, \qquad \lambda \to \infty.
\end{equation}
By our assumption on $N(\lambda)$, it holds then
\begin{equation}
\lim_{{\ell} \to \infty} \frac{1}{{\ell}} \sum_{i=1}^{{\ell}-1} a_{i}(U) = 0.
\end{equation}
By \cite[Lemma 6.2]{Ergodic}, the above statement is equivalent to the existence of a density one {subset $\SSS_U \subseteq \N$} such that
\begin{equation}
\lim_{\substack{i \to \infty \\ i \in \SSS_U}} a_{i}(U) = 0.
\end{equation}
The subset $\SSS_U \subset \N$ depends on the choice of $U$ but we next build a subset $\SSS$ having the same property and which does not depend on $U$, as claimed in the statement. We use ideas similar to those in the proof of \cite[Lemma 6.2]{Ergodic} and \cite[Sec.\ 5]{CdVergodicity}.

Let $\{U_m\}_{m \in \N}$ be an exhaustion of $\M$, that is each $U_m$ is compact, $U_{m+1} \supset U_{m}$, and $U_m \to \M$ as $m \to \infty$. Let $\SSS_m \subset \N$ a density one subset built as above, such that
\begin{equation}\label{eq:propm}
\lim_{\substack{i \to \infty \\ i \in \SSS_m}} a_i(U_m) = 0.
\end{equation}
Without loss of generality, we can assume that $\SSS_{m+1} \subseteq \SSS_m$ (if this is not the case, we take in place of $\SSS_m$ the set $\tilde{\SSS}_m = \cap_{i \leq m} \SSS_i$. Indeed $\tilde{\SSS}_m$ is a density-one subset of $\N$ with the required properties, and such that \eqref{eq:propm} holds). By the density one property, there exists $i_1<i_2<\dots$ such that
\begin{equation}
\frac{1}{{\ell}} \sum_{k=0}^{{\ell}-1} \mathbbold{1}_{\SSS_m}(k) \geq 1 - \frac{1}{m}, \qquad \forall {\ell} \geq i_{m-1}.
\end{equation}
Then, the required set $\SSS$ can be taken as
\begin{equation}
\SSS := \bigcup_{m=1}^\infty \SSS_m \cap [i_{m},i_{m+1}),
\end{equation}
Indeed, if $i_{m} \leq n < i_{m+1}$ we have
\begin{align}
\frac{1}{{\ell}} \sum_{k=0}^{{\ell}-1} \mathbbold{1}_{\SSS}(k) \geq \frac{1}{{\ell}} \sum_{k=0}^{{\ell}-1} \mathbbold{1}_{\SSS_m}(k) \geq 1 - \frac{1}{m},
\end{align}
yielding that $\SSS$ has density one. 

Notice that, by construction since $\SSS_{m+1} \subseteq \SSS_m$, we have that $\SSS \cap [i_m,\infty) \subseteq \SSS_m$. Therefore, for all $m>0$, we have
\begin{equation}
\lim_{\substack{i \to \infty \\ i \in \SSS}} a_i(U_m) = 0.
\end{equation}
We conclude the proof by noticing that any compact set $\bar{U}$ is contained in some $U_{\bar{m}}$, and we have $a_i(\bar{U}) \leq a_i(U_{\bar{m}})$ for all $i \in \N$.
\end{proof}

\section{Almost-Riemannian structures}\label{s:ARS}

We apply our results to a class of structures where the singularity admits a nice local description. This class is modelled on almost-Riemannian structures, introduced in \cite{ABS-GaussBonnet}. We provide here an explicit and local definition based on local coordinates. We refer the reader to \cite[Sec.\ 7]{QC} for a self-contained presentation close to our approach.

Let $N$ be a connected $n$-dimensional manifold, and let $S\subset N$ be an embedded hypersurface. We assume to be given a Riemannian metric $g$ on $N\setminus S$ such that, for all $p \in S$, there exist local coordinates $(x,z) \in \R\times \R^{n-1}$ and smooth vector fields
\begin{equation}
X_0 = \partial_x, \qquad X_i = \sum_{j=1}^{n-1} a_{ij}(x,z) \partial_{z_j},
\end{equation}
which are orthonormal for $g$ outside of $S$, and such that $\det a_{ij}(x,z) = 0$ if and only if $x=0$. This is a particular type of singular Riemannian metric on $N$, called \emph{almost-Riemannian structure} (ARS). Furthermore, we ask that there exist $m \in \N$ and smooth $\hat{a}_{ij}$ such that
\begin{equation}\label{eq:stronglyreg}
a_{ij}(x,z) = x^{m} \hat{a}_{ij}(x,z), \qquad \det \hat{a}_{ij}(0,z) \neq 0.
\end{equation}
In this case, on each local chart, we have
\begin{equation}
X_0 = \partial_x, \qquad X_i = x^m \hat{X}_i = x^{m} \sum_{j=1}^{n-1} \hat{a}_{ij}(x,z) \partial_{z_i},
\end{equation}
where $X_0,\hat{X}_1,\dots,\hat{X}_n$ have maximal rank also on the singular region. In particular we can introduce the regularized Riemannian  metric $\hat g$ in a neighborhood of $S$ as the metric with smooth orthonormal frame given by $\{X_0, \hat{X}_1,\ldots ,\hat X_n\}$. We denote with a hat all the quantities relative to this structure. In particular, the \emph{regularized measure} $\hat{\sigma}(S)$ of $S$ is defined as the surface measure of $S$ with respect to the regularized Riemannian measure.

\begin{definition}
A singular Riemannian structure on an $n$-dimensional manifold $N$ as above is called a strongly regular ARS of order $m$.
\end{definition}
Equivalently, a strongly regular ARS of order $m$ is an ARS whose Riemannian metric $g$ can be written, in a neighborhood of any point of $S$, as
\begin{equation}\label{eq:regmet}
g = dx^2 + x^{-2m} \hat{h}(x,z),
\end{equation}
where $\hat{h}(x,z)$ is a positive definite symmetric tensor, well defined also on the singularity.

\begin{rmk}
As a consequence of the theory developed in \cite{QC}, the Laplace-Beltrami operator of a strongly regular ARS  is essentially self-adjoint in $L^2(N\setminus S,d\mu_g)$. The same result holds more generally for \emph{regular structures}, introduced in \cite{QC}, that is, when the condition \eqref{eq:stronglyreg} is replaced by the weaker one
\begin{equation}\label{eq:kregularcondition}
\det a_{ij}(x,z) = x^k \phi(x,z), \qquad \phi(x,z) \neq 0.
\end{equation}
\end{rmk}

\subsection{Weyl's law for strongly regular ARS}
The restriction of $g$ to $\mathbb{M}= N\setminus S$ is a non-complete Riemannian manifold. The next proposition motivates the relevance of strongly regular ARS.
\begin{prop}\label{p:stronglyARS}
Any strongly regular ARS on a compact $n$-dimen\-sional manifold satisfies Assumption \ref{a:singularity}. Furthermore, as $\varepsilon \to 0$, we have
\begin{equation}
\vol(\M_\varepsilon^\infty) \sim 2 \hat{\sigma}(S) \times \begin{cases}
\varepsilon^{-(m(n-1)-1)} & m(n-1) > 1, \\
\log\varepsilon^{-1} & m(n-1) = 1,
\end{cases}
\end{equation}
where $m \in \N$ is the order of the strongly regular ARS.
\end{prop}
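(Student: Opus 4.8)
The plan is to verify the four conditions of Assumption~\ref{a:singularity} one by one, using the explicit warped-type normal form \eqref{eq:regmet}, and then to compute the volume asymptotics by a direct integration. First I would fix a point $p \in S$ and work in the local coordinates $(x,z)$ in which
$g = dx^2 + x^{-2m}\hat h(x,z)$, with $\hat h$ smooth and positive definite up to $x=0$. Since the distance from the metric boundary satisfies $\distmb(x,z)=|x|$ near $S$ (the curves $x\mapsto(x,z_0)$ are unit-speed geodesics orthogonal to the level sets $\{x=\text{const}\}$, and by the decreasing nature of the fibre metric no competitor can do better — this is exactly the argument already used in the proof of Proposition~\ref{prop:inj-strict-convex}), condition (a) holds: $\distmb$ is smooth on a punctured neighbourhood of $S$.

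Next I would compute the second fundamental form and the sectional curvatures. Writing $x^{-2m}\hat h(x,z) = h(x)$ in the notation of \eqref{eq:metricnormalform}, one has $\Hess(\distmb) = \tfrac12 \partial_x h(x)$ as a quadratic form on $TZ$; since $\partial_x\big(x^{-2m}\hat h\big) = x^{-2m}\big(\partial_x\hat h - \tfrac{2m}{x}\hat h\big)$ and the $-\tfrac{2m}{x}\hat h$ term dominates as $x\to0^+$, we get $\Hess(\distmb)\le 0$ near $S$, which is (b). For (c), I would either use the O'Neill-type formulas as in the proof of Theorem~\ref{t:converse} (treating $x^{-2m}$ as the warping factor $f^2$ with $f=x^{-m}$, modulo the $z$-dependence of $\hat h$, which only contributes bounded-after-rescaling terms), or more robustly observe that $g$ is related to a fixed smooth metric by a conformal-type factor and track how curvature transforms; either way the leading behaviour is $f''/f \sim m(m+1)/x^2$ and $(f'/f)^2 \sim m^2/x^2$, giving $|\Sec| \le C\distmb^{-2}$. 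Finally (d): since the convexity in (b) is in fact \emph{strict} for $m\ge1$ (the $-\tfrac{2m}{x}\hat h$ term makes $\partial_x h(x)$ strictly negative definite), Remark~\ref{r:improvements} / Proposition~\ref{prop:inj-strict-convex} applies and the injectivity radius bound is automatic. This establishes that a strongly regular ARS satisfies Assumption~\ref{a:singularity}.

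For the volume asymptotics, I would write, using the coordinate description of the regularized measure,
\begin{equation}
\vol(\M_\varepsilon^{\varepsilon_0}) = \int_\varepsilon^{\varepsilon_0}\!\!\int_Z x^{-m(n-1)}\sqrt{\det\hat h(x,z)}\,dz\,dx,
\end{equation}
and note that $\int_Z \sqrt{\det\hat h(x,z)}\,dz \to \hat\sigma(S)$ as $x\to 0$ — here one must be slightly careful because $S$ is covered by several charts, so I would use a partition of unity subordinate to the charts and add the contributions, the point being that $\sqrt{\det\hat h(0,z)}\,dz$ is precisely the regularized surface measure $d\hat\sigma$ on $S$ (and the two-sidedness of the collar accounts for the factor $2$, since $x$ ranges over both signs). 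Then the $x$-integral is $\int_\varepsilon^{\varepsilon_0} x^{-m(n-1)}\,dx$, which behaves like $\tfrac{1}{m(n-1)-1}\varepsilon^{-(m(n-1)-1)}$ when $m(n-1)>1$ and like $\log\varepsilon^{-1}$ when $m(n-1)=1$; since $\vol(\M_{\varepsilon_0}^\infty)$ is a finite constant it is absorbed into the error, and a standard dominated-convergence / Cesàro argument upgrades the pointwise convergence $\int_Z\sqrt{\det\hat h(x,\cdot)} \to \hat\sigma(S)$ to the claimed asymptotic equivalence of the integrals.

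The main obstacle I anticipate is condition (c), the curvature control: the normal form \eqref{eq:regmet} has a genuinely $z$-dependent fibre tensor $\hat h(x,z)$, so it is not literally a warped product and the clean O'Neill formulas used in the proof of Theorem~\ref{t:converse} do not apply verbatim. One must check that the cross terms and $z$-derivatives of $\hat h$, after the $x^{-2m}$ rescaling, still produce curvature bounded by $C x^{-2}$ rather than something worse; I expect this to follow from a careful but routine computation of the Christoffel symbols of $g$ in these coordinates, isolating the $\partial_x$ of the conformal factor as the dominant contribution, with all $z$-derivatives of $\hat h$ entering only through terms that scale no worse than $x^{-2}$. A secondary, more bookkeeping-level obstacle is making the "$\int_Z\sqrt{\det\hat h(x,z)}\,dz \to \hat\sigma(S)$'' statement precise across overlapping charts, which the partition-of-unity argument handles.
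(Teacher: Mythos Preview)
Your plan is correct and follows the same overall strategy as the paper: verify (a)--(d) from the normal form, invoke Proposition~\ref{prop:inj-strict-convex} for (d) via strict convexity, and obtain the volume asymptotics by integrating $x^{-m(n-1)}$ against the regularized cross-sectional measure (with the factor $2$ coming from the two sides of $S$). The one substantive difference is in how the curvature bound (c) is established. You correctly identify the $z$-dependence of $\hat h$ as the obstacle to applying the warped-product O'Neill formulas verbatim, and propose a coordinate Christoffel computation as the fallback. The paper instead works in the moving orthonormal frame $X_0,X_1,\dots,X_{n-1}$ and exploits the factorization $a_{ij}=x^m\hat a_{ij}$ directly at the level of the Lie-bracket structural functions: one gets $c_{0i}^\ell=\tfrac{m}{x}\idty_{i\ell}+\hat c_{0i}^\ell$ and $c_{ij}^\ell=x^m\hat c_{ij}^\ell$ with the hatted quantities bounded, from which the Koszul coefficients $\beta,\gamma,\Gamma$ and then all Riemann components follow with explicit leading parts of order $x^{-2}$ and lower-order remainders. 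This frame approach absorbs the $z$-dependence into the smooth hatted terms from the outset, so there is no need to track cross-terms by hand; it also delivers $\Hess(\distmb)(X_i,X_j)=-\tfrac{m}{|x|}\idty_{ij}+O(1)$ in one stroke, confirming the strict convexity you need for (d). Your coordinate Christoffel route would also succeed, but is messier because the inverse metric carries the factor $x^{2m}$ and one must bookkeep more carefully which contributions survive at order $x^{-2}$.
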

\begin{proof}
The non-complete Riemannian manifold $\M = N \setminus S$ has metric boundary given by one or two copies of $S$, depending whether the latter is one or two-sided. In the above local coordinates close to $S$ we have $\distmb(x,z) = |x|$. If $N$ is compact, $\distmb$ is smooth in a uniform neighborhood $U = \{\delta < \varepsilon_0\}$ of the metric boundary, i.e., (a) of Assumption \ref{a:singularity} is verified.

To compute curvature-type quantities, we adopt the following modified Einstein convention. Latin indices run from $1,\dots,n-1$, and repeated indices are summed on that range. The index $0$ is reserved for the variable $x$, i.e., $\partial_0 = \partial_x$. The non-vanishing structural functions are given by
\begin{equation}
[X_0, X_i] = c_{0i}^\ell X_\ell, \qquad [X_i,X_j] = c_{ij}^\ell X_\ell.
\end{equation}
Koszul's formula for the Levi-Civita connection in terms of orthonormal frames yields
\begin{gather}
\nabla_i X_j = \Gamma_{ij}^\ell X_\ell + \gamma_{ij} X_0, \qquad \nabla_0 X_i= \beta_{i\ell} X_\ell, \qquad \nabla_i X_0 = -\gamma_{i \ell} X_\ell, \\
\beta_{i\ell} = \frac{1}{2}(c_{0i}^\ell - c_{0\ell}^i), \qquad \gamma_{i\ell} = \frac{1}{2}(c_{0\ell}^i + c_{0i}^\ell), \qquad \Gamma_{ij}^\ell = \frac{1}{2}(c_{ij}^\ell + c_{\ell j}^i + c_{\ell i}^j).
\end{gather}
Notice that $\beta=-\beta^*$, $\gamma = \gamma^*$, while $\Gamma_{ij}^\ell = - \Gamma_{i\ell}^j$. From the definition
\begin{equation}
R(X,Y,Z,W) = g(\nabla_X \nabla_Y Z - \nabla_Y \nabla_X Z - \nabla_{[X,Y]} Z,W),
\end{equation}
we deduce the following formulas for the Riemann tensor
\begin{align}
R(X_i,X_j,X_k,X_\ell) & = \partial_i \Gamma_{jk}^\ell + \Gamma_{jk}^s \Gamma_{is}^\ell - \gamma_{jk}\gamma_{i\ell}-\partial_j \Gamma_{ik}^\ell - \Gamma_{ij}^s\Gamma_{js}^\ell +\gamma_{ik}\gamma_{j\ell} - c_{ij}^s \Gamma_{sk}^\ell, \label{eq:curvature1}\\
R(X_i,X_j,X_k,X_0) & = \Gamma_{jk}^\ell \gamma_{i \ell} + \partial_i \gamma_{jk} - \Gamma_{ik}^\ell \gamma_{j\ell} - \partial_j \gamma_{ik} - c_{ij}^s \gamma_{sk}, \label{eq:curvature2} \\
R(X_0,X_i,X_j,X_0) & =  \partial_0 \gamma_{i j} + \gamma_{i \ell} \beta_{\ell j} + \gamma_{j \ell} \beta_{\ell i} - \gamma_{i \ell}\gamma_{\ell j}, \label{eq:curvature3}
\end{align}
In particular $\mathrm{Sec}(X\wedge Y) = R(X,Y,Y,X)$ for any pair of unit orthogonal vectors $X,Y$. Furthermore, since $\distmb = |x|$, we have\begin{equation}\label{eq:hessian}
\Hess(\distmb)(X_i,X_j) =  -\mathrm{sgn}(x) \gamma_{ij}.
\end{equation}

In terms of the matrix $a$, the structural functions read
\begin{equation}\label{eq:c}
c_{0i}^\ell = a^{-1}_{\ell s} \partial_0 a_{s i}, \qquad c_{ij}^\ell = \left(a_{r i} \partial_{r} a_{s j} - a_{j r} \partial_r a_{s i}\right) a^{-1}_{s\ell}.
\end{equation}
Using \eqref{eq:c} one obtains
\begin{equation}
c_{0i}^\ell = \frac{m}{x} \mathbbold{1}_{i \ell} + \hat{c}_{0i}^\ell, \qquad c_{ij}^\ell = x^m \hat{c}_{ij}^\ell,
\end{equation}
where $\mathbbold{1}_{i \ell}$ is the Kronecker delta, which implies
\begin{equation}
\beta_{ij} = \hat{\beta}_{ij}, \qquad \gamma_{ij} = \frac{m}{x} \mathbbold{1}_{ij} + \hat{\gamma}_{ij}, \qquad \Gamma_{ij}^\ell = x^m \hat{\Gamma}_{ij}^\ell.
\end{equation}
From \eqref{eq:curvature1}-\eqref{eq:curvature3} we obtain
\begin{align}
R(X_i,X_j,X_k,X_\ell) & = -\frac{m^2}{x^2}(\mathbbold{1}_{jk}\mathbbold{1}_{i\ell}-\mathbbold{1}_{ik}\mathbbold{1}_{j\ell}) + O\left(\frac{1}{|x|}\right), \\
R(X_i,X_j,X_k,X_0) & = O(1), \\
R(X_0,X_i,X_j,X_0) & = - \frac{m(m+1)}{x^2} \mathbbold{1}_{ij} + O\left(\frac{1}{|x|}\right),
\end{align}
and
\begin{equation}\label{eq:koszul-hess}
\Hess(\distmb)(X_i,X_j) = -\frac{m}{|x|}\mathbbold{1}_{ij} + O(1).
\end{equation}
In particular points (b) and (c) of Assumption~\ref{a:singularity} are verified, and the convexity condition is strict. Moreover, point (d) is valid thanks to Proposition \ref{prop:inj-strict-convex}. Finally, the volume asymptotics follows from a straightforward computation.
\end{proof}

We can then apply our theory to strongly regular ARS. We recall that the notation $f(\lambda)\asymp g(\lambda)$ means that $f(\lambda)/g(\lambda)$ has finite and positive $\limsup$ and $\liminf$, as $\lambda\to\infty$.	

\begin{theorem}[Weyl's law for strongly regular ARS]\label{t:ARS}
Consider the Laplace-Beltrami operator of a strongly regular ARS of order $m$ on an $n$-dimensional compact manifold. Then, if $(n,m)\neq (2,1)$, we have
\begin{equation}
  N(\lambda)\asymp \lambda^{(n-1)(m+1)/2}, \qquad  \lambda \to \infty.
\end{equation}
On the other hand, if $(n,m)= (2,1)$, we have
    \begin{equation}
 N(\lambda) \sim  \frac{|\hat\sigma(S)|}{4\pi} \lambda \log\lambda.
    \end{equation}
\end{theorem}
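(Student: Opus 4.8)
The plan is to deduce Theorem~\ref{t:ARS} directly from the general results already established, by feeding the geometric and volume information of Proposition~\ref{p:stronglyARS} into Theorem~\ref{t:weyl} and Theorem~\ref{t:weylexact}. First I would invoke Proposition~\ref{p:stronglyARS}: a strongly regular ARS of order $m$ on a compact manifold satisfies Assumption~\ref{a:singularity}, so all the abstract machinery applies, and moreover the truncations obey $\vol(\M_\varepsilon^\infty)\sim 2\hat\sigma(S)\,\varepsilon^{-(m(n-1)-1)}$ when $m(n-1)>1$ and $\vol(\M_\varepsilon^\infty)\sim 2\hat\sigma(S)\log\varepsilon^{-1}$ when $m(n-1)=1$. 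Setting $\varepsilon=1/\sqrt\lambda$ converts these into asymptotics for $\vv(\lambda):=\vol(\M^\infty_{1/\sqrt\lambda})$.

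Second, I would split into the two cases of the statement. Since $m\ge 1$ and $n\ge 2$, the positive integer $m(n-1)$ equals $1$ precisely when $(n,m)=(2,1)$; hence for $(n,m)\neq(2,1)$ one is in the regime $m(n-1)>1$, where $\vv(\lambda)\sim 2\hat\sigma(S)\,\lambda^{(m(n-1)-1)/2}$. Applying the two-sided bound of Theorem~\ref{t:weyl}, i.e.\ $N(\lambda)\asymp\lambda^{n/2}\vv(\lambda)$, and simplifying the exponent $\tfrac n2+\tfrac{m(n-1)-1}{2}=\tfrac{(n-1)(m+1)}{2}$, gives the first claim. For the remaining case $(n,m)=(2,1)$, Proposition~\ref{p:stronglyARS} gives $\vv(\lambda)\sim 2\hat\sigma(S)\log\tfrac1\varepsilon=\hat\sigma(S)\log\lambda$; since $\log\lambda$ is slowly varying and slow variation is preserved under asymptotic equivalence, $\vv$ is slowly varying, so Theorem~\ref{t:weylexact} yields the exact limit $N(\lambda)/(\lambda^{n/2}\vv(\lambda))\to\omega_n/(2\pi)^n$. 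Specializing $n=2$, where $\omega_2/(2\pi)^2=1/(4\pi)$ and $\lambda^{n/2}=\lambda$, and inserting $\vv(\lambda)\sim\hat\sigma(S)\log\lambda$, produces $N(\lambda)\sim\frac{|\hat\sigma(S)|}{4\pi}\lambda\log\lambda$.

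I do not expect a genuine obstacle: the analytic content lives entirely in Theorems~\ref{t:weyl} and~\ref{t:weylexact} and in Proposition~\ref{p:stronglyARS}, so the proof is essentially bookkeeping. The only points that deserve a word of care are (i) that $m(n-1)=1$ characterizes the exceptional pair $(2,1)$, (ii) that slow variation passes to functions asymptotic to a slowly varying one, so Theorem~\ref{t:weylexact} is legitimately applicable in the logarithmic case, and (iii) that in the generic case $\vv$ is regularly varying of positive index rather than slowly varying, so Theorem~\ref{t:weylexact} is not available and one can only assert $\asymp$ rather than a sharp Weyl asymptotic with an explicit constant.
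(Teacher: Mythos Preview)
Your proposal is correct and follows essentially the same approach as the paper: invoke Proposition~\ref{p:stronglyARS} to verify Assumption~\ref{a:singularity} and obtain the volume asymptotics, then apply Theorem~\ref{t:weyl} in the case $m(n-1)>1$ and Theorem~\ref{t:weylexact} in the slowly varying case $m(n-1)=1$. Your treatment of the constants in the $(n,m)=(2,1)$ case (noting $\log\varepsilon^{-1}=\tfrac12\log\lambda$ so that $\vv(\lambda)\sim\hat\sigma(S)\log\lambda$) is in fact more careful than the paper's own write-up.
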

\begin{proof}
By Proposition \ref{p:stronglyARS}, Assumption \ref{a:singularity} is verified, and it can be easily seen that the volume function $\vv(\lambda) = \vol(\M_{1/\sqrt{\lambda}})$ satisfies
\begin{equation}
\vv(\lambda) \sim 2\hat{\sigma}(S) \times \begin{cases}
\lambda^{(m(n-1)-1)/2} & m(n-1)>1, \\
\log\lambda & m(n-1)=1.
\end{cases}
\end{equation}
In the first case, the result follows from Theorem \ref{t:weyl} while, in the second case, $\vv(\lambda)$ is slowly varying and we can apply Theorem \ref{t:weylexact}.
\end{proof}

\subsection{Examples}
We conclude this section with two examples. The first one shows that a general non-strongly regular ARS does not satisfy Assumption~\ref{a:singularity}. In particular, on ARSs all geometric quantities can have an arbitrarily fast polynomial explosion to $\pm \infty$. The second example is an ARS structure that satisfies Assumption~\ref{a:singularity} but that is not regular.


\begin{example}[Worst case curvature explosion]\label{e:worst}
Let $k\geq 1$, and consider the structure defined by declaring the following vector fields to be orthonormal
\begin{equation}
X_0 = \partial_x, \qquad X_1 =  \partial_{z_1}+x\partial_{z_2}, \qquad X_2 = x^{k}\partial_{z_2},
\end{equation}
where $(x,z) \in \R \times \R^{2}$. In other words, $X_i = \sum_j a_{ij}\partial_{z_j}$, for $i=1,2$, with
\begin{equation}
a = \begin{pmatrix} 1 & 0 \\x & x^k
\end{pmatrix}.
\end{equation}
This structure is not strongly regular. 
We  use the formalism introduced in the proof of Proposition~\ref{p:stronglyARS}.
In particular, letting $C_{i\ell} = c_{0i}^\ell$, we have
\begin{equation}
C = (a^{-1}\partial_0 a)^* = \begin{pmatrix}
0 & \tfrac{1}{x^k} \\0 & \tfrac{k}{x}
\end{pmatrix}.
\end{equation}
It follows that
\begin{equation}
\beta  = \frac{1}{2}(C-C^*) = \begin{pmatrix}
0 & \tfrac{1}{2x^k} \\ -\tfrac{1}{2x^k} & 0
\end{pmatrix} 
,\quad\text{and}\quad
\gamma = \frac{1}{2}(C+C^*)= \begin{pmatrix}
0 & \tfrac{1}{2x^k} \\ \tfrac{1}{2x^k} & \tfrac{k}{x}
\end{pmatrix}.
\end{equation}
Rewriting \eqref{eq:curvature3} in this notation, we obtain, for $i=1,2$,
\begin{equation}
\mathrm{Sec}(X_0\wedge X_i)  = (\partial_0\gamma + 2\gamma\beta- \gamma^2)_{ii}  = \mathrm{diag}\left(-\frac{3}{4x^{2k}},\frac{1}{4x^{2k}}-\frac{k(k+1)}{x^2}\right).
\end{equation}
Thus, this structure does not satisfy the curvature assumptions of Assumption~\ref{a:singularity} as soon as $k\ge 2$. Furthermore, even the convexity assumption is not satisfied: by \eqref{eq:hessian}, the eigenvalues of $\Hess(\distmb)$ are $h_\pm = \pm \tfrac{1}{2|x|^k}(1+o(1))$ as $x\to 0$.
\end{example}

\begin{example}
Let $N$ be a compact $2$-dimensional manifold, and  $S \simeq \mathbb{S}^1$ be a smooth embedded sub-manifold of $N$. Equip $N$ with an ARS whose local orthonormal frame, in a neighborhood $U =(-1,1)\times \mathbb S^1$ of $S$, reads
\begin{equation}
  X_0 = \partial_x,\qquad
  X_1 = x\left(x^{2}+\sin^2(\theta/2)\right) \partial_\theta.
\end{equation}
This structure is not regular, since \eqref{eq:stronglyreg} is not satisfied uniformly for $\theta\in\mathbb S^1$. Nevertheless, by \eqref{eq:curvature3} and \eqref{eq:hessian} we have
 \begin{equation}    
 \Sec(X_0 \wedge X_1) = -\frac{2}{x^2} - \frac{10x^2+2\sin^2({\theta }/{2})}{(x^2+\sin^2({\theta }/{2}))^2}
 \quad  \text{and}\quad
    \Hess(\distmb) =-\frac{1}{|x|}-\frac{2|x|}{x^2+\sin^2({\theta }/{2})}.
  \end{equation}
Then, on $U$ it holds
  \begin{equation}
    -\frac{13}{\distmb^2} \le \Sec \le 0, \qquad\text{and}\qquad \Hess(\distmb)< 0. 
  \end{equation}
In particular, by Proposition \ref{prop:inj-strict-convex}, this singular Riemannian structure satisfies Assumption~\ref{a:singularity} and thus we can apply Theorem~\ref{t:weyl}.
To this aim, we only need to estimate the volume function $\lambda\mapsto\vv(\lambda)$, which can be readily done by integrating the measure $d\mu=\frac{dx d\theta}{|x|(x^2+\sin^2(\theta/2))}$. Indeed, we have
  \begin{equation}
      \int_{[\varepsilon,1]\times\mathbb S^1} d\mu 
        = \int_{\varepsilon}^{1}\frac{2 \pi }{x^2 \sqrt{x^2+1}}\,dx 
        =\frac{2\pi}\varepsilon + O(1),
  \end{equation}
  whence $\vv(\lambda) \sim 4 \pi \lambda^{1/2}$. This immediately yields $   N(\lambda)\asymp \lambda^{3/2}$ as $\lambda \to \infty$.
\end{example}

\appendix

\section{Auxiliary geometric estimates}\label{a:auxiliary}

On the simply connected $n$-dimensional Riemannian space form $M_K$ of curvature $K\in\R$, the heat kernel depends only on $t$ and on the distance $r=d(q,p)$, and thus, with a slight abuse of notation, we denote it by $E_K(t,r)$.
Here and below, $r \in [0,\pi/ \sqrt{K}]$, with the convention that $\pi / \sqrt{K} = +\infty$ if $K \leq 0$.

\begin{lemma}\label{l:pKreste}
For all $T>0$ there exists a constant $C>0$, depending only on $n$ and $T$, such that
\begin{equation}
|(4 \pi t)^{n/2} E_K(t,0) - 1 | \leq C |K| t, \qquad \forall t \leq T/|K|.
\end{equation}
\end{lemma}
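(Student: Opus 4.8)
The plan is to compare the heat kernel $E_K(t,0)$ on the space form of curvature $K$ with the Euclidean heat kernel $(4\pi t)^{-n/2}$, and to get a quantitative bound on the difference that is linear in $|K|t$ on the range $t \leq T/|K|$. The starting point is the classical explicit expression for $E_K(t,r)$. For $K>0$ one has the subordination-type formula expressing the sphere heat kernel in terms of the Euclidean one (Debye/McKean--Singer), and for $K<0$ the analogous formula for hyperbolic space (Millson's recursion in odd dimensions, and the corresponding integral formula in even dimensions). In all cases one can write, after rescaling so that $|K|=1$,
\[
(4\pi t)^{n/2} E_K(t,0) = \int_0^\infty \varphi_n(t,s)\, d\nu(s)
\]
for an explicit density, or more simply use the on-diagonal asymptotic expansion $(4\pi t)^{n/2}E_K(t,0) = 1 + a_1 t + a_2 t^2 + \dots$, where $a_1 = \tfrac16 \mathrm{Scal}/2$ is a universal multiple of $K$ (here $\mathrm{Scal} = n(n-1)K$), and more importantly the remainder is controlled.

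The cleanest route I would take: reduce to $|K|=1$ by the scaling $E_K(t,0) = |K|^{n/2} E_{\pm 1}(|K|t, 0)$, so that the claim becomes: there is $C=C(n,T)$ with $|(4\pi \tau)^{n/2} E_{\pm1}(\tau,0) - 1| \leq C\tau$ for all $\tau \leq T$. Then I would invoke the known smoothness of $\tau \mapsto (4\pi\tau)^{n/2}E_{\pm1}(\tau,0)$ on $(0,\infty)$ together with its value $1$ at $\tau = 0$ (from the small-time expansion) and the fact that its derivative is bounded on the compact interval $[0,T]$ (again from the structure of the explicit formula, which shows the function extends smoothly across $\tau=0$). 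A first-order Taylor estimate with Lagrange remainder on $[0,T]$ then gives $|(4\pi\tau)^{n/2}E_{\pm1}(\tau,0)-1| \leq C(n,T)\,\tau$, and undoing the scaling reinstates the factor $|K|$ and replaces $\tau \leq T$ by $t \leq T/|K|$. For the case $K=0$ the left-hand side is identically $0$ and there is nothing to prove.

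The main obstacle is making the bound on the derivative of $\tau\mapsto(4\pi\tau)^{n/2}E_{\pm1}(\tau,0)$ on $[0,T]$ genuinely explicit and uniform, especially as $\tau\to 0^+$; this is where one must either quote a precise form of the Minakshisundaram--Pleijel expansion with remainder on space forms, or exploit the closed-form expressions (finite sum of Gaussians times polynomials in odd dimensions; a convergent series or an integral against a Gaussian in even dimensions) and differentiate under the sum/integral, checking that each term contributes an $O(\tau)$ correction with constants depending only on $n$ and $T$. Once that analytic input is in hand, the scaling argument and the Taylor estimate are routine.
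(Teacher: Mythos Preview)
Your proposal is correct and follows essentially the same route as the paper: reduce to $|K|=1$ via the scaling identity $(4\pi t)^{n/2}E_K(t,0)=(4\pi t|K|)^{n/2}E_{\pm1}(t|K|,0)$, then establish $|(4\pi\tau)^{n/2}E_{\pm1}(\tau,0)-1|\leq C\tau$ on $[0,T]$. The paper obtains this last estimate by directly invoking the Minakshisundaram--Pleijel asymptotics (which you also mention as the natural analytic input), rather than phrasing it as a Taylor bound on the derivative; the two formulations are equivalent here.
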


\begin{proof}
If $K=0$, the estimate is trivially verified. Let us consider $K \neq 0$. For a Riemannian metric $g$ and $\alpha>0$, let $g_{\alpha} := \alpha^2 g$. Then, $\Sec(g_\alpha) = \alpha^{-2}\Sec(g)$, and $E_{g_\alpha}(t,p,q) = {\alpha^{-n}} E_{g}(t/\alpha^2,p,q)$. This immediately implies
\begin{equation}\label{eq:rescaling}
(4\pi t)^{n/2} E_K(t,r) = (4\pi t |K|)^{n/2}E_{\pm 1}(t |K|,r \sqrt{|K|}),
\end{equation}
where $\pm 1$ is the sign of $K$. Moreover, by the Minakshisundaram-Pleijel asymptotics,\footnote{We refer to the following simplified statement, valid for any complete $n$-dimensional Riemannian manifold: for all $T>0$ and $q \in M$ there exists $C>0$ such that $\left|(4 \pi t)^{n/2}E(t,q,q) - 1\right| \leq C t$, for all $t \in (0,T]$. For a proof in the compact case, see e.g.\ \cite[Prop.\ 3.23]{Rosenberg}. The extension to the non compact case is done via a localization argument exploiting \eqref{eq:Grig} and Varadhan's formula.} we deduce that, for all $T>0$, there exist a constant $C>0$ such that
\begin{equation}
|(4\pi t )^{n/2} E_{\pm 1}(t,0) - 1| \leq C t, \qquad \forall t \leq T,
\end{equation}
where $C$ depends only on $n$ and $T$. This and \eqref{eq:rescaling} prove the statement.
\end{proof}

\begin{lemma}\label{l:deduction}
Let $K \geq 0$, and let $B_K(r)$ be the ball of radius $r \leq \pi/\sqrt{K}$ on the simply connected space form with constant curvature equal to $K$ and dimension $n$. Then, there exists a constant $C>0$, depending only on the dimension, such that
\begin{equation}
\vol(B_K(r)) \geq C r^n, \qquad \forall r \le \pi/\sqrt{K}.
\end{equation}
\end{lemma}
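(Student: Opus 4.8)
The plan is to reduce to the unit round sphere by a scaling argument, and then to prove an elementary one-dimensional integral inequality.

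If $K=0$ the space form is Euclidean $\R^n$ and $\vol(B_0(r))=\omega_n r^n$, so the bound holds as long as $C\le\omega_n$; thus I may assume $K>0$. Exactly as in the rescaling used in the proof of Lemma~\ref{l:pKreste} (writing $g_\alpha=\alpha^2 g$, so that $\Sec(g_\alpha)=\alpha^{-2}\Sec(g)$ and the volume of a ball scales by $\alpha^n$), the space form of curvature $K$ is isometric to $K^{-1}$ times the unit sphere metric, whence
\begin{equation}
\vol(B_K(r)) = K^{-n/2}\,\vol\big(B_1(r\sqrt{K})\big).
\end{equation}
Hence it suffices to find a dimensional constant $c>0$ with $\vol(B_1(\rho))\ge c\,\rho^n$ for all $\rho\in(0,\pi]$: taking $\rho=r\sqrt K$ and using $K^{-n/2}(r\sqrt K)^n=r^n$ then gives the claim for every $K>0$.

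On the unit sphere the geodesic ball of radius $\rho\le\pi$ has volume
\begin{equation}
\vol(B_1(\rho)) = \sigma_{n-1}\int_0^\rho \sin^{n-1}(s)\,ds,
\end{equation}
where $\sigma_{n-1}$ denotes the $(n-1)$-volume of $\mathbb{S}^{n-1}$, so everything reduces to bounding the last integral from below by $(c/\sigma_{n-1})\rho^n$. I would split into two ranges. For $0<\rho\le\pi/2$, concavity of $\sin$ on $[0,\pi]$ gives $\sin s\ge \tfrac2\pi s$ on $[0,\tfrac\pi2]$ (the graph lies above the chord through $(0,0)$ and $(\tfrac\pi2,1)$), hence
\begin{equation}
\int_0^\rho \sin^{n-1}(s)\,ds \ge \Big(\tfrac2\pi\Big)^{n-1}\int_0^\rho s^{n-1}\,ds = \frac1n\Big(\tfrac2\pi\Big)^{n-1}\rho^n.
\end{equation}
For $\pi/2<\rho\le\pi$, the integral is bounded below by the fixed positive number $I_n:=\int_0^{\pi/2}\sin^{n-1}(s)\,ds$ while $\rho^n\le\pi^n$, so $\int_0^\rho\sin^{n-1}(s)\,ds\ge I_n\ge (I_n\pi^{-n})\,\rho^n$. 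Taking $c=\sigma_{n-1}\min\{\tfrac1n(\tfrac2\pi)^{n-1},\,I_n\pi^{-n}\}$, and shrinking it below $\omega_n$ to also cover $K=0$, finishes the proof.

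The argument is essentially routine; the only point worth flagging is that one cannot expect a pointwise bound $\sin^{n-1}s\ge\mathrm{const}\cdot s^{n-1}$ on all of $[0,\pi]$, since the integrand degenerates near $s=\pi$. This is exactly why the range $\rho$ close to $\pi$ is treated separately, using only that $\rho^n$ stays bounded there while $\vol(B_1(\rho))$ is non-decreasing and already positive at $\rho=\pi/2$. Equivalently, one could invoke compactness: the map $\rho\mapsto\vol(B_1(\rho))/\rho^n$ is continuous and positive on $(0,\pi]$ with limit $\sigma_{n-1}/n$ as $\rho\to0^+$, hence attains a positive minimum on $[0,\pi]$.
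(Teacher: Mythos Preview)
Your proof is correct. The paper takes a different and slightly slicker route: it invokes the Bishop--Gromov inequality to observe that $r\mapsto \vol(B_K(r))/\vol(B_0(r))$ is non-increasing, rescales to $K=1$, and then evaluates the ratio at the endpoint $\rho=\pi$, obtaining directly $C=\vol(B_1(\pi))/\pi^n$. Your argument is more hands-on: you reduce to $K=1$ by the same scaling, but then bound $\int_0^\rho \sin^{n-1}(s)\,ds$ from below by an explicit case split (or, equivalently, by continuity/compactness of $\rho\mapsto\vol(B_1(\rho))/\rho^n$ on $(0,\pi]$). The paper's approach is shorter and yields a clean closed-form constant at the price of citing Bishop--Gromov; yours is entirely elementary and self-contained, which is a reasonable trade-off for such a routine lemma.
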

\begin{proof}
Since $K \geq 0$, by the Bishop-Gromov inequality, $r \mapsto \vol(B_K(r)) / \vol(B_0(r))$ is non-increasing. Hence, the rescaling argument used in the proof of Lemma~\ref{l:pKreste} yields
\begin{equation}
\frac{\vol(B_K(r))}{\vol(B_0(r))} = \frac{\vol(B_1(r \sqrt{K}))}{\vol(B_0(r \sqrt{K}))} \geq \frac{\vol(B_1(\pi))}{\vol(B_0(\pi))}.
\end{equation}
To conclude the proof it suffices to observe that $\vol(B_0(r)) = r^n\vol(B_0(1))$. In particular, this yields $C = \vol(B_1(\pi))/\pi^n$.
\end{proof}

In the next lemma, we show that, for any ball there always exists a spherical sector which points away from the boundary and whose size does not depend on the point. This yields a uniform lower bound to the measure of sufficiently small balls.
\begin{lemma}\label{l:cono}
Let $(M,g)$ be a {complete} $n$-dimensional Riemannian manifold with boundary. Let $H\geq 0$ such that $-H \leq \mathrm{Hess}(\distb)$ for $\distb < \inj_{\partial}(M){/2}$ and set
\begin{equation}
r_0= \min\left\{\inj(M),\frac{\inj_{\partial}(M)}{4},\frac{1}{H}\right\}.
\end{equation}
Then, for any $o \in M$ and $r \leq r_0$, there exists an open set $S_o(r) \subset B_o(r)$ such that 
\begin{itemize}
\item if $B_o(r)$ does not intersect $\partial M$, then $S_o(r) = B_o(r)$;
\item if $B_o(r)$ intersects $\partial M$, then the closest point of $S_o(r)$ to $\partial M$ is $o$.
\end{itemize}

Let, moreover, $K\ge 0$ be such that $\Sec(g)\le K$ on $S_o(r)$. Then, there exists a constant $C\in(0,1/2)$, depending only on $n$, such that
\begin{equation}\label{eq:bkr}
\vol(B_{o}(r)) \geq \vol(S_o(r)) \geq C \vol(B_K(r)), \qquad  \forall r \leq r_0.
\end{equation}
\end{lemma}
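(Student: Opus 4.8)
The plan is to split along the dichotomy $\distb(o)\ge r$ versus $\distb(o)<r$, which is exactly the dichotomy ``$B_o(r)\cap\partial M=\emptyset$'' versus ``$B_o(r)\cap\partial M\neq\emptyset$'' (the open ball meets $\partial M$ iff $\distb(o)<r$). In the first case I set $S_o(r):=B_o(r)$, and there is nothing to check about the ``closest point'' condition. Since $r\le\inj(M)$, the map $\exp_o$ is a diffeomorphism on $B(0,r)\subset T_oM$, so geodesic polar coordinates $(s,v)$, with $0<s<r$ and $v\in S^{n-1}\subset T_oM$, are available on $B_o(r)$; writing $g=ds^2+g_{ij}\,d\theta^i d\theta^j$, the hypothesis $\Sec(g)\le K$ together with the Günther (Rauch) Jacobi-field comparison gives $\sqrt{\det g_{ij}}(s,v)\ge\big(\tfrac{\sin(\sqrt K s)}{\sqrt K}\big)^{n-1}$ for $s\le\min\{r,\pi/\sqrt K\}$. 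Integrating and using that $\vol(B_K(\rho))$ has already saturated to $\vol(M_K)$ once $\rho\ge\pi/\sqrt K$ (so $\vol(B_K(\min\{r,\pi/\sqrt K\}))=\vol(B_K(r))$), one gets $\vol(B_o(r))\ge\vol(B_K(r))$.

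For the main case $\distb(o)<r$, note $\distb(o)<r\le\inj_\partial(M)/2<\inj_\partial(M)$, so $\distb$ is smooth near $o$ and $w:=\nabla\distb(o)$ is a unit vector. I would take $S_o(r)$ to be the image under $\exp_o$ of the open spherical sector $\{\,sv:\ 0<s<r,\ v\in S^{n-1},\ \angle(v,w)<\pi/4\,\}$; this is an open subset of $B_o(r)$, and $o$ lies in its closure. To verify that $\distb$ attains its minimum over the closure of $S_o(r)$ only at $o$, fix such a ray $\gamma(s)=\exp_o(sv)$ and set $\phi(s):=\distb(\gamma(s))$. Because $\distb$ is $1$-Lipschitz, $\phi(s)\le\distb(o)+s<2r\le\inj_\partial(M)$ for $s\le r$, so the bound $\Hess(\distb)\ge-H$ applies along $\gamma$; since $\gamma$ is a geodesic, $\phi''(s)=\Hess(\distb)(\dot\gamma,\dot\gamma)\ge-H$, while $\phi'(0)=\langle w,v\rangle=\cos\angle(v,w)>\tfrac1{\sqrt2}$. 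Integrating twice and using $s\le r\le 1/H$,
\[
  \phi(s)\ \ge\ \distb(o)+\tfrac{s}{\sqrt2}-\tfrac{H}{2}s^2\ \ge\ \distb(o)+s\Big(\tfrac1{\sqrt2}-\tfrac12\Big)\ >\ \distb(o),\qquad 0<s<r,
\]
so $\distb>\distb(o)$ on $S_o(r)$ (in particular the sector's geodesics stay in the interior of $M$, which is what makes the construction legitimate).

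The volume bound in this case is obtained sector-wise just as before: geodesic polar coordinates are valid on $B_o(r)$ since $r\le\inj(M)$, and restricting the Günther estimate to the cone of directions $\mathcal C=\{v\in S^{n-1}:\angle(v,w)<\pi/4\}$ yields
\[
  \vol(S_o(r))\ \ge\ |\mathcal C|\int_0^{\min\{r,\pi/\sqrt K\}}\Big(\tfrac{\sin(\sqrt K s)}{\sqrt K}\Big)^{n-1}ds\ =\ \frac{|\mathcal C|}{|S^{n-1}|}\,\vol(B_K(r)),
\]
where $|\cdot|$ denotes the round measure on $S^{n-1}$ and we used again $\vol(B_K(\min\{r,\pi/\sqrt K\}))=\vol(B_K(r))$. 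The ratio $a_n:=|\mathcal C|/|S^{n-1}|$ depends only on $n$ and satisfies $0<a_n<\tfrac12$, since $\mathcal C$ is a spherical cap of half-angle $\pi/4<\pi/2$, hence a strict subset of a hemisphere. Setting $C:=a_n$ then works in both cases: when $\distb(o)\ge r$ we have $\vol(B_o(r))\ge\vol(B_K(r))\ge a_n\vol(B_K(r))$, and $\vol(B_o(r))\ge\vol(S_o(r))$ always since $S_o(r)\subseteq B_o(r)$.

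I expect the only genuinely delicate point to be the bookkeeping in the curvature comparison: one must not leave the region where $\exp_o$ is a diffeomorphism (ensured by $r\le\inj(M)$), and one must handle the case $r>\pi/\sqrt K$, where the model space form has a conjugate point — this is dealt with by truncating the comparison at radius $\min\{r,\pi/\sqrt K\}$ and noting that $\vol(B_K(\cdot))$ is already maximal beyond $\pi/\sqrt K$. The remaining ingredients — smoothness of $\distb$ near $o$ for $\distb(o)<\inj_\partial(M)$, the Lipschitz estimate keeping the rays inside $\{\distb<\inj_\partial(M)\}$, and the one-variable integration of $\phi''\ge-H$ — are routine.
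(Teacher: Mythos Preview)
Your proof is correct and follows essentially the same approach as the paper: the same dichotomy on whether $B_o(r)$ meets $\partial M$, the same construction of $S_o(r)$ as a geodesic spherical sector about $\nabla\distb(o)$, the same second-order Taylor argument for $\distb\circ\gamma$ using the Hessian lower bound, and the same G\"unther--Rauch volume comparison on the sector. The only cosmetic differences are your choice of half-angle $\pi/4$ versus the paper's $\pi/3$, your direct use of $\phi''\ge -H$ (the paper carries a $\sin^2\theta$ factor coming from $\Hess(\distb)(\nabla\distb,\cdot)=0$), and your explicit handling of the case $r>\pi/\sqrt{K}$, which the paper leaves implicit.
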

\begin{proof}
Fix $r \leq r_0$. If $\distb(o) > r$, the ball does not intersect the boundary, and we set $S_o(r) = B_o(r)$. By the curvature upper bound, and since the balls lie within the injectivity radius from their center, we have that their volume is bounded from below by the volume of the ball with the same radius in the simply connected space form with constant curvature equal to $K$, which yields \eqref{eq:bkr} with $C=1$.

On the other hand, if $\distb(o) \leq r$, the ball hits $\partial M$. The condition $r \le \inj_{\partial}(M)/4$ implies that $B_o(r)$ lies in the region where $\distb$ is smooth and $- H \leq \mathrm{Hess}(\distb)$. Consider a length parametrized geodesic $\gamma$ emanating from $o$, in a direction so that $\cos\theta := g(\dot\gamma,\nabla \distb)>0$. It then holds $\theta\in(-\pi/2,\pi/2)$, and
\begin{equation}
\distb(\gamma(t)) \geq - \frac{H}{2}\sin^2(\theta) t^2 + \cos(\theta) t + \distb(o), \qquad \forall t \leq r.
\end{equation}
Therefore, minimizing geodesics emanating from $o$ and with length smaller than $r$ do not cross $\partial M$ provided that, e.g., 
\begin{equation}
\cos\theta \geq \frac{Hr}{2}.
\end{equation}
Thanks to the assumption $r \leq 1/H$ the above inequality holds if $|\theta| < \pi/3$. Let $S_o(r) \subset B_o(r)$ be the corresponding spherical sector of radius $r$. 
By construction, $o$ is its closest point to $\partial M$. 
Since $r \leq r_0 \leq \inj(M)$, we can fix normal polar coordinates $(s,\Omega) \in [0,r_0]\times \mathbb{S}^{n-1}$ at $o$. Therefore,
\begin{equation}
\vol(B_o(r)) \geq \vol(S_o(r)) = \int_{\bar{S}(r)} s^{n-1} A(s,\Omega) ds d\Omega,
\end{equation}
where $\bar{S}(r)$ is the Euclidean spherical sector corresponding to $S_o(r)$ in these coordinates, and $s^{n-1}A(s,\Omega)$ is the Jacobian determinant of the exponential map with base $o$. By standard comparison arguments, the assumption $\Sec(g) \leq K$ yields $A(s,\Omega) \geq A_K(s)$, where the latter is the corresponding object on the $n$-dimensional space form with constant curvature equal to $K$. Hence,
\begin{equation}
\vol(S_o(r)) \geq \int_{\bar{S}(r)} s^{n-1} A_K(s) d s d\Omega.
\end{equation}
Without loss of generality, we can fix coordinates $(\theta,\varphi) \in (-\pi/2,\pi/2) \times \mathbb{S}^{n-2}$ such that $\bar{S}(r) = \{ |\theta| < \pi/3,\; s < r\}$. In these coordinates $d\Omega = \sin(\theta)^{n-2} d\theta d\varphi$, where $d\varphi$ is the standard measure on $\mathbb{S}^{n-2}$. Therefore,
\begin{equation}
\vol(S_o(r)) \geq \int_0^r s^{n-1} A_K(s)  ds \int_0^{\pi/3} \sin(\theta)^{n-2}\vol(\mathbb{S}^{n-2}) d\theta  = C \vol(B_K(r)).
\end{equation}
A simple symmetry argument implies that $C\in(0,1/2)$.
\end{proof}

\begin{lemma}[Li-Yau inequality]\label{l:li-yau}
  Let $(M, g)$ be a complete $n$-dimensional Riemannian manifold with boundary (convex, in the Neumann case), and $\Ric(M)\ge -K(n-1)$, for some $K \geq 0$. Then there exist $C_1,C_2,C_3>0$, depending only on $n$, such that
  \begin{equation}\label{eq:LYlemma}
    E^{\pm}(t,p,q)\le \frac{C_1}{\sqrt{\vol(B_p(\sqrt{t}))\vol(B_q(\sqrt{t}))}}e^{C_2 Kt - C_3\frac{d^2(p,q)}{4t}},\quad \forall(t,p,q)\in \R_+\times M \times M.
  \end{equation}
Furthermore, assume that $\Sec(M)\leq K$, and let
\begin{equation}
\sqrt{t_0} = \min \left\{ \inj(M),\frac{\inj_{\partial}(M)}{4}, \frac{\pi}{\sqrt{K}}\right\}.
\end{equation}
Then, there exists a constant $C_4>0$, depending only on $n$, such that
\begin{equation}\label{eq:LY-diag}
    (4 \pi t)^{n/2} E^{\pm}(t,p,q)\le C_4 e^{- C_3\frac{d^2(p,q)}{4t}}, \qquad \forall (t,p,q) \in (0,t_0) \times M \times M.
\end{equation}
\end{lemma}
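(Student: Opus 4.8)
The plan is to treat the two displayed estimates in turn. For \eqref{eq:LYlemma}, I would reduce to the closed (boundaryless) case by means of the doubling trick already exploited in the proof of Theorem~\ref{t:heat}. From the reflection formula \eqref{e:hk-double} and the nonnegativity of $\bar E$, for both boundary conditions one has
\[
E^{\pm}(t,p,q)\le \bar E(t,p,q)+\bar E(t,p,q^*),\qquad \forall\,(t,p,q)\in\R_+\times M\times M,
\]
so it is enough to bound the heat kernel $\bar E$ of the double $(\bar M,\bar g)$. Since $\bar g$ is merely Lipschitz, I would use the smooth approximations $\bar g_\tau$ from Step~1 of the proof of Theorem~\ref{t:heat}: they satisfy $\Ric(\bar g_\tau)\ge -(n-1)K$, converge to $\bar g$ in the measured Gromov--Hausdorff sense, and the corresponding heat kernels $\bar E^\tau$ converge locally uniformly to $\bar E$. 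On each \emph{closed} manifold $(\bar M,\bar g_\tau)$ the classical Li--Yau Gaussian upper bound holds with constants $C_1,C_2,C_3$ depending only on $n$ (the standard consequence of the Li--Yau gradient estimate; see e.g.\ \cite{Chavel,Grigoryan2009}), and \eqref{eq:LYlemma} is obtained by letting $\tau\to0$.

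Two points need attention in the limit. For the Gaussian factor, the triangle inequality $d(p,q)\le d(p,w)+d(w,q)$ for every $w\in\partial M$ gives $d(p,q)\le \bar d(p,q^*)$, so the reflected term $\bar E(t,p,q^*)$ carries the exponential weight $e^{-C_3 d^2(p,q)/4t}$ as required. For the volumes in the denominator, the metric ball $B_p(\sqrt t)$ of $(M,d)$ sits inside the ball $\bar B_p(\sqrt t)$ of the double, and the portmanteau theorem together with $\bar d_\tau\to\bar d$ uniformly yields $\liminf_{\tau\to0}\vol^\tau(\bar B_p^\tau(\sqrt t))\ge \vol(\bar B_p(\sqrt t))\ge\vol(B_p(\sqrt t))$; passing to the limit in the $\bar g_\tau$-estimate therefore produces \eqref{eq:LYlemma} written with the $(M,d)$-ball volumes.

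For \eqref{eq:LY-diag} it remains only to bound from below the volumes in \eqref{eq:LYlemma} when $t<t_0$. Since $\sqrt t<\sqrt{t_0}\le\min\{\inj(M),\tfrac12\inj_\partial(M),1/H\}$ and $\Sec(M)\le K$, Lemma~\ref{l:cono} applied at $p$ and at $q$ gives $\vol(B_p(\sqrt t)),\vol(B_q(\sqrt t))\ge C\vol(B_K(\sqrt t))$, and since moreover $\sqrt t<\pi/\sqrt K$, Lemma~\ref{l:deduction} gives $\vol(B_K(\sqrt t))\ge C' t^{n/2}$; hence $\vol(B_p(\sqrt t))\vol(B_q(\sqrt t))\ge c\,t^n$ for some dimensional $c>0$. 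Substituting into \eqref{eq:LYlemma} and multiplying by $(4\pi t)^{n/2}$ gives
\[
(4\pi t)^{n/2}E^{\pm}(t,p,q)\le \frac{(4\pi)^{n/2}C_1}{c}\,e^{C_2 Kt}\,e^{-C_3 d^2(p,q)/4t},
\]
and finally $Kt\le Kt_0\le\pi^2$ (because $\sqrt{t_0}\le\pi/\sqrt K$), so $e^{C_2 Kt}\le e^{C_2\pi^2}$ can be absorbed into the constant, yielding \eqref{eq:LY-diag} with $C_4=(4\pi)^{n/2}C_1 e^{C_2\pi^2}/c$ and the same $C_3$ as in \eqref{eq:LYlemma}.

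I expect the only genuinely delicate step to be the first one: establishing the Gaussian bound on the non-smooth double rather than merely quoting it. This is, however, essentially a rerun of Steps~1--3 in the proof of Theorem~\ref{t:heat}, the Li--Yau estimate being stable under the measured Gromov--Hausdorff approximation used there; alternatively one may appeal directly to the Li--Yau estimate for manifolds with convex boundary (the Dirichlet case follows from domain monotonicity on the double, and in the Neumann case the boundary term in the Bochner/Reilly identity has the favourable sign thanks to convexity). The remaining steps are routine.
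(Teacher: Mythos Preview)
Your second part (the derivation of \eqref{eq:LY-diag} from \eqref{eq:LYlemma}) is exactly what the paper does: apply Lemma~\ref{l:cono} and Lemma~\ref{l:deduction} to bound $\vol(B_p(\sqrt t))\ge C t^{n/2}$ for $t<t_0$, and absorb $e^{C_2 Kt}\le e^{C_2\pi^2}$ into the constant.

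For the first part you and the paper diverge. The paper does \emph{not} go through the double: it simply cites \cite[Thm.~3.2]{Li1986}, the original Li--Yau Gaussian upper bound, which is stated and proved directly for complete manifolds with convex boundary (the convexity ensures the boundary term in the Bochner formula has the right sign, exactly as you note in your final parenthetical remark). So the paper's proof of \eqref{eq:LYlemma} is a one-line citation, whereas your main argument reruns the doubling/approximation machinery of Theorem~\ref{t:heat}. Your route is correct but more laborious, and the limiting step for the volumes (your portmanteau argument) is the kind of technicality that the direct citation sidesteps entirely. You do mention the direct approach as an alternative at the very end; that alternative \emph{is} the paper's proof. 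In short: your argument is sound, but you reproved a theorem that the paper just quotes.
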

\begin{proof}
The first inequality is the celebrated estimate \cite[Thm.\ 3.2]{Li1986} by Li and Yau, where the parameters $\varepsilon$ and $\alpha$ are fixed in the allowed ranges.
  
To prove \eqref{eq:LY-diag}, we uniformly bound from below the volumes appearing in the denominator of \eqref{eq:LYlemma}. By Lemma \ref{l:stdcomparisonforH}, on the region $\{\distb<\inj_\partial(M)/2\}$ it holds $\Hess(\distb)\geq -H$ for some $H>0$ satisfying $\tfrac{\sqrt{t_0}}{10}\leq \tfrac{1}{H}$. Thus, we have $\tfrac{\sqrt{t}}{10}\leq \min \left\{ \inj(M),\frac{\inj_{\partial}(M)}{4},  \frac{1}{H},\frac{\pi}{\sqrt{K}}\right\}$. Then, we can apply both Lemma~\ref{l:cono} and \ref{l:deduction}, and $\vol(B_K(\sqrt{t}/10)) \geq C t^{n/2}$ for a constant $C>0$ depending only on $n$.
\end{proof}

\begin{lemma}\label{l:stdcomparisonforH}
Let $M$ be a complete manifold with boundary and let $K\geq 0$ such that $\Sec(M) \geq -K$. Then, on the region $\{\distb< \tfrac{\inj_{\partial}(M)}{2}\}$, it holds
\begin{equation}
  \label{eq:H-plus}
-H_-:=-\sqrt{K}\coth(r_0 \sqrt{K}) \leq \Hess(d_\partial), \qquad r_0 := \min \left\{\inj(M),\tfrac{\inj_\partial(M)}{4}\right\}.
\end{equation}
Furthermore, if $\partial M$ is convex, it also holds
\begin{equation}
  \label{eq:H-minus}
\Hess(d_\partial) \leq \sqrt{K} \tanh( \sqrt{K}\inj_\partial(M)/2)=: H_+.
\end{equation}
In particular, setting $H = \max\{H_+,H_-\}$, it holds $|\Hess(\distb)|\leq H$ and
\begin{equation}
 \frac{1}{10} \min \left\{\inj(M),\frac{\inj_\partial(M)}{4},\frac{\pi}{\sqrt{K}}\right\} \leq \frac{1}{H} \leq \min \left\{\inj(M),\frac{\inj_\partial(M)}{4},\frac{\pi}{\sqrt{K}}\right\}.
\end{equation}
\end{lemma}
\begin{proof}
  The last part of the statement follows by elementary computations, using \eqref{eq:H-plus} and \eqref{eq:H-minus}. In turn these bounds are obtained via Riccati comparison, as we now detail.

To prove \eqref{eq:H-plus}, observe that $r_{0}$ is such that for any $p\in\partial M$ we can find a point  $q\in M$ such that the ball $B_q(r_0)$ is contained in $M$ and is tangent to $\partial M$ at $p$. Letting $r = d(q,\cdot)$, this implies that $\Hess(\distb)\geq -\Hess(r)$ at $p$. By the lower curvature bound, $\Hess(r)$ is not greater than the same quantity on a model space of constant curvature $-K$, which is given by $\sqrt{K}\coth(\sqrt{K} r_0)$ (i.e., the value at $r_0$ of the solution of $h'(t) +h(t)^2-K=0$ with initial condition $h(t) \sim \tfrac{1}{t}$ as $t\to 0$).  This proves the required bound on $\partial M$. To obtain the bound on the region $\{\distb < \tfrac{\inj_{\partial}(M)}2\}$ it suffices to repeat the same construction by replacing $\partial M$ with the level set $\distb = c$ for all $c < \inj_\partial(M)/2$.

To obtain the bound \eqref{eq:H-minus} it suffices to observe that, thanks to $\Hess(\distb)\leq 0$ on $\partial M$ and  $\Sec(M) \geq -K$, Riccati comparison implies that $\Hess(\distb) \leq \sqrt{K}\tanh(\sqrt{K} c)$ on the level set $\distb = c<\inj_\partial(M)$ (here, the right hand side is the value at $c$ of the solution of $h'(t) +h(t)^2-K=0$ with $h(0)=0$).
\end{proof}

The following theorem was suggested in \cite[p.\ 69]{AlexanderBishop} for complete Riemannian structures with curvature bounded above and injectivity radius bounded below.
\begin{lemma}\label{l:injsubmanifold}
 Let $(\M,g)$ be a Riemannian manifold, possibly non-complete. Let $(Z,h)$ be a closed submanifold with bounded second fundamental form $|II| \leq H$. Assume that, on a tube $T_D$ around $Z$ of radius $D>0$ at positive distance from the metric boundary of $\M$, we have $\Sec(T_D,g) \leq K$ and $\inj(T_D,g) \geq I$. Then it holds
\begin{equation}
\inj(Z,h) \geq \min\left\{ \frac{\pi}{\sqrt{K +H^2}},\frac{\pi}{2\sqrt{K}}, I, D \right\},
\end{equation}
with the convention that $\pi/\sqrt{K} = \infty$ if $K\leq 0$.
\end{lemma}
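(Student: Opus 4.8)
The plan is to combine Klingenberg's lemma for the compact (hence complete) Riemannian manifold $(N,h)$ with the Gauss equation and a Klingenberg-type argument carried out inside the ambient tube $V_D$. Recall that Klingenberg's lemma (see e.g.\ \cite[Ch.\ 5]{CheegerEbin}) asserts that either $\inj(N,h)$ equals the conjugate radius $\mathrm{conj}(N,h)$, or there is a closed geodesic $\gamma$ of $(N,h)$ of length $2\inj(N,h)$, obtained by doubling a minimizing geodesic from a point $p$ to its cut point $m$; in the latter case the two halves $\gamma_1,\gamma_2$ of $\gamma$ are minimizing geodesics from $p$ to $m$ with $d_N(p,m)=\inj(N,h)$ and opposite initial velocities $\dot\gamma_1(0)=-\dot\gamma_2(0)$. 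So it suffices to bound $\mathrm{conj}(N,h)$ and, in the second case, $\inj(N,h)=\tfrac12 L(\gamma)$, from below by $\min\{\pi/\sqrt{K+H^2},\,\pi/(2\sqrt K),\,I,\,D\}$.

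For the conjugate radius I would use the Gauss equation: for orthonormal $X,Y\in TN$ one has
\[
\Sec(N,h)(X\wedge Y)=\Sec(\M,g)(X\wedge Y)+\langle II(X,X),II(Y,Y)\rangle-|II(X,Y)|^2\le K+H^2,
\]
so that the Rauch comparison theorem (equivalently, the Bonnet--Myers estimate for the conjugate radius) gives $\mathrm{conj}(N,h)\ge \pi/\sqrt{K+H^2}$, with the usual convention when $K+H^2\le 0$.

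For the closed geodesic, set $\rho=\inj(N,h)=\tfrac12 L(\gamma)$ and assume $\rho<\min\{I,D\}$, otherwise we are done. Then $\gamma$ lies in the ball $B_\M(p,\rho)$, which is contained in the tube $V_D$ (as $\rho<D$) and inside the injectivity radius of $V_D$ (as $\rho<I$); in particular $d_\M(p,m)\le\rho<I$, there is a unique ambient minimizing geodesic $\sigma$ from $p$ to $m$, and the whole configuration lifts through $\exp^\M_p$ to the Euclidean ball of radius $\rho$ in $T_p\M$, on which $\exp^\M_p$ is a diffeomorphism onto its image. I would then pull $\gamma_1$ and $\gamma_2$ back to curves $\tilde\gamma_1,\tilde\gamma_2$ issuing from the origin with opposite initial directions and terminating at the common point $\widetilde m=(\exp^\M_p)^{-1}(m)$. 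Using $\Sec(\M)\le K$ to bound the metric distortion of $\exp^\M_p$ (legitimate since $\rho<\pi/(2\sqrt K)$) and $|II|\le H$ to bound the ambient geodesic curvature $|\nabla^\M_{\dot\gamma_i}\dot\gamma_i|=|II(\dot\gamma_i,\dot\gamma_i)|$ of $\gamma_i$, the total turning of each $\tilde\gamma_i$ is bounded; since the two curves leave the origin in opposite directions but reach the same point, their turnings must add up to at least $\pi$, and unravelling the resulting inequality forces $\rho\ge\min\{\pi/(2\sqrt K),I,D\}$. (Alternatively, one may invoke the classical fact that a null-homotopic closed curve of geodesic curvature $\le H$ lying in a region of a manifold with $\Sec\le K$ has length bounded below by the corresponding quantity, applied to $\gamma$ inside the simply connected ball $B_\M(p,\rho)$.)

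The Klingenberg reduction and the Gauss-equation bound are routine; the main obstacle is the last step, namely extracting the constants $\pi/(2\sqrt K)$, $I$ and $D$ from the turning estimate in $T_p\M$ while keeping careful track of the interplay between the curvature bound $K$, the bound $H$ on the second fundamental form, the ambient injectivity radius $I$ and the tube radius $D$.
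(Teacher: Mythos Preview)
Your overall architecture---Klingenberg reduction, the Gauss-equation bound $\Sec(N,h)\le K+H^2$ for the conjugate radius, and then a lower bound on the length of a short closed geodesic of $(N,h)$ viewed as a curve of ambient geodesic curvature $\le H$ in a region of $\M$ with $\Sec\le K$---is exactly the approach of the paper. The difference is in how the last step is executed, and there your proposal has a genuine gap.

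You claim that the turning argument, under the standing hypothesis $\rho<\min\{I,D,\pi/(2\sqrt K)\}$, forces $\rho\ge\min\{\pi/(2\sqrt K),I,D\}$. This cannot be right: for $H$ large the closed geodesic can be arbitrarily short while still satisfying all the ambient constraints, so no inequality independent of $H$ can come out of that step. The roles of the constants are inverted in your write-up. The quantities $I$, $D$ and $\pi/(2\sqrt K)$ enter only as \emph{thresholds}: if $\ell(\gamma)/2$ is below all of them, then $\gamma$ sits inside a geodesic ball $B_R\subset V_D$ with $R\le \pi/(2\sqrt K)$, within the injectivity radius, and away from the metric boundary. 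What one must then extract from the curvature argument is the $H$-dependent bound $\ell(\gamma)/2\ge \pi/\sqrt{K+H^2}$.

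The paper obtains precisely this by observing that such a ball $B_R$ is a $\mathrm{CAT}(K)$ space (Rauch comparison plus unique geodesics), and then invoking a result of Alexander--Bishop \cite[Cor.\ 1.2(c)]{AlexanderBishop}: a closed curve of geodesic curvature $\le H$ in a $\mathrm{CAT}(K)$ space has length at least that of the circle of curvature $H$ in the model space of curvature $K$, namely $2\pi/\sqrt{K+H^2}$. Your parenthetical ``alternative'' is essentially this statement; it is not a side remark but the actual content of the step. Your primary route via a turning estimate in $T_p\M$ could in principle be made to work, but it would have to reproduce the Alexander--Bishop inequality, and the conclusion you drew from it needs to be corrected to $\rho\ge\pi/\sqrt{K+H^2}$.
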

\begin{proof}
By Gauss' equation, for all $X,Y,U,V \in TZ$, we have
\begin{equation}
R^g(X,Y,U,V) = R^h(X,Y,U,V) + II(X,U) II(Y,V)- II(Y,U) II(X,V).
\end{equation}
It follows that $\Sec(Z,h) \leq K+H^2$. By Klingenberg's Lemma,
\begin{equation}\label{eq:kkk}
\inj(Z,h) \geq \min\left\{\frac{\pi}{\sqrt{K+H^2}},\frac{\ell(\gamma)}{2}\right\},
\end{equation}
where $\ell(\gamma)$ is length the shortest non-trivial closed geodesic in $(Z,h)$. Let $\gamma$ be such a geodesic, parametrized with unit speed. Its curvature in $(\M,g)$ is bounded by
\begin{equation}
|\nabla_{\dot\gamma}^g \dot\gamma| = | II(\dot\gamma,\dot\gamma)| \leq H.
\end{equation}
Assume that $\ell(\gamma) < \min \{2 I,\pi/\sqrt{K},2D\}$. In this case, $\gamma$ lies in a ball $B_R$ of $(\M,g)$ with radius $R \leq \tfrac{\pi}{2\sqrt{K}}$ within the injectivity radius of its center, not touching the metric boundary of $(\M,g)$, and the curvature on $B_R$ is bounded above by $K$. By \cite[p.\ 100, Proposition 6.4.6 (ii)]{Buser-Karcher-almostflat} for any two points in $B_R$ there exists a unique geodesic joining them, of length $\leq 2R$, and all contained in the interior of $B_R$, In other words $B_R$, considered as a metric space with the induced length metric, has the unique geodesic property (any two points are joined by a unique geodesic). As a corollary of Rauch's Theorem \cite[1.30]{CheegerEbin}, geodesic triangles in $B_R$ are thinner than corresponding ones on the model space with constant curvature $K$. Thus, $B_R$ is a $\mathrm{CAT}(K)$ space.\footnote{We recall that a $\mathrm{CAT}(K)$ space (also called an $R_K$ domain) is a metric space in which any two points are joined by a unique
geodesic and, for any geodesic triangle of perimeter less than $2\pi/\sqrt{K}$, the distance between points on the triangle is at most equal to the distance between the corresponding points on
the model triangle in the simply connected $2$-dimensional space of curvature $K$.}

The length of closed curves with geodesic curvature bounded from above by $H$ in a $\mathrm{CAT}(K)$ space can be bounded from below in terms of the length of the corresponding circles with constant curvature $H$ on the $2$-dimensional simply connected space form with constant curvature $K$, see \cite[Cor.\ 1.2(c)]{AlexanderBishop}. We thus obtain that
\begin{equation}
\frac{\ell(\gamma)}{2} \geq \frac{\pi}{\sqrt{K+H^2}}.
\end{equation} 
We conclude easily, see also \cite[Thm.\ 1.3]{Alex-Bishop-Injectvity}.
\end{proof}

\section{Compactness of the resolvent}\label{app:compactness}

In the following, $\delta : \M \to (0,\infty)$ is a general smooth function, even though we only need the case in which $\delta$ is a distance from the metric boundary.

\begin{prop}[Hardy inequality]\label{p:hardy-appendix}
Let $\M$ be a non-complete Riemannian manifold. Let $\delta : \M \to (0,\infty)$ be smooth. Let $a>0$ and $U := \{x\in \M\mid \delta(x)\leq a\}$. Assume that $\Delta\delta \leq 0$ on $U$ and that $|\nabla\delta|$ is bounded on $U$. Then it holds:
  \begin{equation}\label{eq:hardy-appendix}
    \int_{U} |\nabla u|^2\,d\mu_g \geq \frac{1}{4}\int_{U} \frac{|u|^2}{\delta^2} |\nabla\delta|^2\,d\mu_g, \qquad \forall u \in H^1_0(\M),
  \end{equation}
  where $H^1_0(\M)$ is the closure in the $W^{1,2}$ norm of the space of functions $\{\phi \mid \phi \in C^\infty_c(\M)\}$. 
\end{prop}
\begin{proof}
We observe that $U$ a measurable subset. Since $|\nabla\delta|$ is bounded, it is sufficient to prove \eqref{eq:hardy-appendix} for (the restriction to $U$) of $u\in C^\infty_c(\M)$. Set $h:=u \delta^{-1/2}$. We find
\begin{equation}
|\nabla u|^2 \geq \frac{u^2}{4\delta^2}|\nabla\delta|^2 + \frac{1}{2}g(\nabla \delta ,\nabla h^2).
\end{equation}
Assume first that $a$ is a regular value for $\delta$, in which case $\M^a_0$ is a smooth manifold with boundary $\partial U = \{\delta = a\}$, with induced measure $d\sigma_g$. 
Integrating the above inequality, and using the divergence theorem, we obtain
\begin{equation}
\int_{U} |\nabla u|^2\, d\mu_g \geq \frac{1}{4}\int_{U} \frac{u^2}{\delta^2}|\nabla\delta|^2\,d\mu_g - \frac{1}{2} \int_{U} \frac{u^2}{\delta}\Delta\delta\,d\mu_g + \frac{1}{2}\int_{\partial U} \frac{u^2}{\delta} |\nabla\delta| \;d\sigma_g  \geq  \frac{1}{4}\int_{U} \frac{u^2}{\delta^2}|\nabla\delta|^2\,d\mu_g.
\end{equation}
This proves \eqref{eq:hardy-appendix} when $a$ is a non-critical value for $\delta$. If $a$ is a critical value, then by Sard theorem we can find $(a_n)_{n\in \N}$ with $a_n\uparrow a$ of non-critical values for which the inequality holds, and we conclude by the monotone convergence theorem.
\end{proof}

We use the previous theorem only in the case in which $\delta$ is the distance from the metric boundary, for which $|\nabla \delta|=1$. A non-complete Riemannian manifold $(\M,g)$ has \emph{regular metric boundary} if the distance $\distmb$ from the metric boundary is smooth in a neighborhood $U= \{\delta < \varepsilon_0\}$ of the metric boundary (this is point (a) in Assumption \ref{a:singularity}).

The next statement is a simplified version of \cite[Prop.\ 3.7]{QC}.

\begin{theorem}[Compact embedding]\label{t:compact-emb}
  Let $\M$ be a non-complete Riemannian manifold with compact metric completion and regular metric boundary. Assume, moreover, that the boundaries $\partial \M_\varepsilon$ are mean convex for sufficiently small $\varepsilon$. 
Then $H^1_0(\Omega)$ and $H^1(\Omega)$ compactly embed in $L^2(\Omega)$, where $\Omega = \M_a^b$  for  $0\leq a< b \leq \infty$.
\end{theorem}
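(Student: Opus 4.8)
The plan is to localise the argument near the metric boundary, where the only genuine failure of compactness occurs, and to use the Hardy inequality of Proposition~\ref{p:hardy-appendix} to tame it. First, I note that $H^1_0(\Omega)$ sits inside $H^1(\Omega)$ with no increase of norm, so it is enough to prove the compact embedding for $H^1(\Omega)$. Second, since the metric completion $\overline{\M}$ is compact, the set $\M_\eta^b$ has compact closure in $\M$ for every $\eta>0$ --- indeed $\{\distmb\ge\eta\}$ is a closed subset of $\overline{\M}$ at positive distance from the metric boundary. Hence the value $b=\infty$ is not special (it does not correspond to a genuine non-compact end), and the case $a>0$ is the classical Rellich--Kondrachov theorem applied on the compact set $\overline{\M_a^b}$; the content of the statement is the case $a=0$, which I now address.

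I would fix $\varepsilon>0$ small enough that $\distmb$ is smooth on $\M_0^\varepsilon$, that the level sets $\partial\M_x$ are mean convex for $0<x\le\varepsilon$, and $\varepsilon<b$, so that Proposition~\ref{p:hardy-appendix} gives
\[
\int_{\M_0^{\varepsilon}}|\nabla u|^2\,d\mu_g \ \ge\ \frac18\int_{\M_0^{\varepsilon}}\frac{|u|^2}{\distmb^2}\,d\mu_g,\qquad u\in H^1(\M_0^{\varepsilon}).
\]
For $u\in H^1(\Omega)$, its restriction to $\M_0^\varepsilon$ lies in $H^1(\M_0^\varepsilon)$ (both spaces being closures of restrictions of $C^\infty_c(\M)$ functions), so from $\distmb\le\eta$ on $\M_0^\eta$ I obtain, for every $0<\eta<\varepsilon$,
\[
\int_{\M_0^{\eta}}|u|^2\,d\mu_g \ \le\ \eta^2\int_{\M_0^{\eta}}\frac{|u|^2}{\distmb^2}\,d\mu_g \ \le\ 8\eta^2\int_{\M_0^{\varepsilon}}|\nabla u|^2\,d\mu_g \ \le\ 8\eta^2\,\|u\|_{H^1(\Omega)}^2 .
\]
This is the crucial uniform control: an $H^1$-bounded family deposits a uniformly vanishing amount of $L^2$-mass on $\M_0^\eta$ as $\eta\to0$.

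With this in hand, compactness follows by a routine diagonal argument. Given $(u_k)_k$ bounded in $H^1(\Omega)$ by $M$, for each $j$ with $1/j<\varepsilon$ the set $\M_{1/j}^{b}$ has compact closure in $\M$, so (covering it by finitely many coordinate charts and using the Euclidean Rellich theorem) $(u_k)_k$ has a subsequence converging in $L^2(\M_{1/j}^{b})$; a diagonal extraction produces a single subsequence converging in $L^2(\M_\eta^{b})$ for every $\eta>0$. Splitting $\Omega=\M_0^\eta\cup\M_\eta^b$ and applying the tail estimate to $u_k-u_l$,
\[
\|u_k-u_l\|_{L^2(\Omega)}^2 \ \le\ 32M^2\eta^2+\|u_k-u_l\|_{L^2(\M_\eta^{b})}^2 ,
\]
one makes the right-hand side arbitrarily small by first choosing $\eta$ and then taking $k,l$ large, so the subsequence is Cauchy, hence convergent, in $L^2(\Omega)$.

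The step I expect to be the main obstacle --- and the reason the hypotheses on the metric boundary (regularity and mean convexity) are required --- is precisely the control near $\{\distmb=0\}$: there the geometry of $\M$ is uncontrolled and the volume may be infinite, so there is no uniform Sobolev inequality or cone condition and interior Rellich alone does not reach the singular set. The Hardy inequality is exactly the device that trades the wild near-singularity geometry for the quantitative bound $\int_{\M_0^\eta}|u|^2\lesssim\eta^2\|\nabla u\|_{L^2}^2$; once this is available, everything else is standard. A minor point to verify carefully is that restriction to $\M_0^\varepsilon$ indeed maps $H^1(\Omega)$ into $H^1(\M_0^\varepsilon)$, so that Proposition~\ref{p:hardy-appendix} applies; this is immediate from the definition of these spaces as closures of restrictions of $C^\infty_c(\M)$ functions.
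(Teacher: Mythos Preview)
Your proof is correct and follows essentially the same strategy as the paper: control the $L^2$-mass near the metric boundary via the Hardy inequality of Proposition~\ref{p:hardy-appendix}, invoke the classical Rellich--Kondrachov theorem on the relatively compact truncations $\M_\eta^b$, and combine the two by a diagonal argument. The only technical difference is that the paper localises by a Lipschitz partition of unity $\phi_1+\phi_2\equiv 1$ (so that $\phi_1 u_n\in H^1_0(\M_0^\varepsilon)$ and $\phi_2 u_n\in H^1(\M_{\varepsilon/2}^\infty)$), whereas you apply Hardy directly to the restriction $u|_{\M_0^\varepsilon}\in H^1(\M_0^\varepsilon)$ and obtain the tail estimate $\int_{\M_0^\eta}|u|^2\le 8\eta^2\|u\|_{H^1}^2$ without cutoffs; since Proposition~\ref{p:hardy-appendix} is stated for all of $H^1(\M_0^\varepsilon)$, your shortcut is legitimate and slightly streamlines the argument.
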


\begin{proof}
Although the only non-standard case is the case $a=0$, we provide a unified proof for all $a$.
  Let $(u_n)_n\subset H^1_0(\Omega)$ be such that $\|u_n\|_{H^1(\Omega)}\le C$ for some $C>0$. In order to find a subsequence converging in $L^2(\Omega)$, we consider separately the behavior close and far away from the metric boundary. For a fixed $\varepsilon>0$ sufficiently small, consider two Lipschitz functions $\phi_1,\phi_2:\M\to [0,1]$ such that $\phi_1+\phi_2\equiv 1$, $\phi_1\equiv 1$ on $\M_0^{\varepsilon/2}$, $\supp\phi_1\subset \M_0^\varepsilon$, and $|\nabla\phi_i|\le M$ for some $M>0$. Define $u_{n,i}=\phi_i u_n$, so that, with a slight abuse of notation, $u_{n,1}\in H^1_0(\M_0^\varepsilon \cap \Omega)$ and $u_{n,2}\in H^1_0(\M_{\varepsilon/2}^\infty\cap\Omega)$. 
  
  By a density argument, Leibniz rule, and Young inequality, for $i=1,2$ we have
  \begin{equation}
    \int_{\Omega} |\nabla u_{n,i}|^2\,d\mu_g 
    \le 2 \int_{\Omega}\left(|\nabla\phi_i|^2 |u|^2 + \phi_i^2|\nabla u|^2 \right)\,d\mu_g.
  \end{equation}
  By the fact that $\|u_n\|_{H^1(\Omega)}\le C$ and that $\phi_i$ is uniformly Lipschitz, the above implies that, up to enlarging $C>0$, it holds $\|u_{n,1}\|_{H^1(\M_0^\varepsilon\cap\Omega)}\le C$ and $\|u_{n,2}\|_{H^1(\M_{\varepsilon/2}^\infty\cap\Omega)}\le C$.
 Since $\M_{\varepsilon/2}^\infty \cap \Omega$ is relatively compact in $(\M,g)$, by \cite[Cor.\ 10.21]{Grigoryan-book} we have that $H^1_0(\M_{\varepsilon/2}^\infty\cap\Omega)$ compactly embeds in $L^2(\M_{\varepsilon/2}^\infty\cap\Omega)$. 
  Thus, $(u_{n,2})_n$, being bounded in $H^1_0(\M_{\varepsilon/2}^\infty\cap\Omega)$, admits a convergent subsequence in $L^2(\Omega)$.
  
  On the other hand, by the Hardy inequality of Proposition~\ref{p:hardy-appendix}, we have
  \begin{equation}
    \|u_{n,1}\|_{L^2(\M_0^\varepsilon\cap \Omega)}^2 = \int_{\M_0^\varepsilon \cap \Omega}|u_{n,1}|^2\,d\mu_g 
    \le 4\varepsilon^2\int_{\M_0^{\varepsilon} \cap \Omega}|\nabla u_{n,1}|^2 \,d\mu_g \le 4C\varepsilon^2,
  \end{equation}
  where we used the boundedness of $(u_{n,1})_n$ in $H^1(\M_0^\varepsilon\cap \Omega)$.
  Then, by choosing $\varepsilon = \varepsilon_k=(2Ck)^{-1}$, we obtain that for any $k\in \N$ there exists a subsequence $n\mapsto \gamma_k(n)$ such that $u_{\gamma_k(n)} = u_{\gamma_k(n),1}+u_{\gamma_k(n),2}$ with $\|u_{\gamma_k(n),1}\|\le 1/k$ and $(u_{\gamma_k(n),2})_n$ convergent in $L^2(\Omega)$. A diagonal argument yields the existence of a subsequence of $(u_n)_n$ convergent in $L^2(\Omega)$, proving the compact embedding of $H^1_0(\Omega)$ in $L^2(\Omega)$ (see e.g.\ \cite[Prop.\ 3.7]{QC}).
  
To prove the analogous statement for $H^1(\Omega)$, we follow the same steps but, in this case, $u_{n,2}\in H^1(\M_{\varepsilon/2}^\infty\cap\Omega)$. 
Since $\M_{\varepsilon/2}^\infty\cap\Omega$ can be seen as a relatively compact subset with smooth boundary of a larger complete Riemannian manifold, $H^1(\M_{\varepsilon/2}^\infty\cap\Omega)$ compactly embeds in $L^2(\M_{\varepsilon/2}^\infty\cap\Omega)$.\footnote{It follows by the arguments of \cite[Cor.\ 10.21, 2nd proof]{Grigoryan-book} and Rellich-Kondrachov \cite[Thm.\ 6.3]{Adams2003}.}
\end{proof}

\begin{corollary}\label{cor:discrete-spec}
  Let $\M$ be a non-complete Riemannian manifold with compact metric completion and regular metric boundary. Assume, moreover, that the boundaries $\partial \M_\varepsilon^\infty$ are mean convex for sufficiently small $\varepsilon$. Then the resolvents $(\Delta^{\pm}_{\Omega}-z)^{-1}$ of the Dirichlet or Neumann Laplace-Beltrami operators are compact for any $z>0$, where $\Omega = \M_a^b$  for  $0\leq a< b \leq \infty$. In particular, the spectra of $\Delta^{\pm}_{\Omega}$ are discrete.
\end{corollary}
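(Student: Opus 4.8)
The plan is to deduce the corollary directly from the compact embedding established in Theorem~\ref{t:compact-emb}, via the standard form-theoretic criterion for compactness of the resolvent. First I would fix the setup: $\Delta^+_\Omega$ and $\Delta^-_\Omega$ are the non-positive self-adjoint operators on $L^2(\Omega,d\mu_g)$ associated with the quadratic form $Q(u)=\int_\Omega|\nabla u|^2\,d\mu_g$ on the form domains $V^+=H^1_0(\Omega)$ and $V^-=H^1(\Omega)$, respectively, where $\Omega=\M_a^b$ with $0\le a<b\le\infty$. For $z>0$ one has $\Delta^\pm_\Omega-z\le -z<0$, so $\Delta^\pm_\Omega-z$ is boundedly invertible, and $(\Delta^\pm_\Omega-z)^{-1}=-(-\Delta^\pm_\Omega+z)^{-1}$ is a well-defined bounded operator on $L^2(\Omega)$; the claim is that it is compact.

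Next I would show that this resolvent factors through the form domain. Equip $V^\pm$ with the norm $\|u\|_{V^\pm}^2:=Q(u)+z\|u\|^2$, equivalent to the $H^1$ norm for $z>0$. Given $f\in L^2(\Omega)$, set $u=(-\Delta^\pm_\Omega+z)^{-1}f$, which lies in $V^\pm$. By the first representation theorem, $Q(u,v)+z\langle u,v\rangle=\langle f,v\rangle$ for all $v\in V^\pm$; taking $v=u$ gives $\|u\|_{V^\pm}^2=\langle f,u\rangle\le\|f\|\,\|u\|\le\|f\|\,\|u\|_{V^\pm}$, hence $\|u\|_{V^\pm}\le\|f\|$. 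Thus $(-\Delta^\pm_\Omega+z)^{-1}$ is bounded as a map $L^2(\Omega)\to V^\pm$. Composing with the compact inclusion $V^\pm\hookrightarrow L^2(\Omega)$ provided by Theorem~\ref{t:compact-emb} (whose standing hypotheses—compact metric completion, regular metric boundary, mean-convex level sets $\partial\M_\varepsilon=\{\delta=\varepsilon\}$ for small $\varepsilon$—are exactly those assumed here) shows that $(\Delta^\pm_\Omega-z)^{-1}:L^2(\Omega)\to L^2(\Omega)$ is compact, for every $z>0$. Finally, a self-adjoint operator with compact resolvent has purely discrete spectrum—eigenvalues of finite multiplicity with no finite accumulation point—which yields the last assertion.

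There is essentially no obstacle at this stage: all the analytic work—taming the singular region via the Hardy inequality of Proposition~\ref{p:hardy-appendix} and patching with the Rellich–Kondrachov theorem on the relatively compact regular part—has already been absorbed into Theorem~\ref{t:compact-emb}. The only points deserving a word of care are that the hypothesis on mean convexity of $\partial\M_\varepsilon^\infty$ coincides with the hypothesis of Theorem~\ref{t:compact-emb}, and that positivity of $z$ is precisely what makes $\|\cdot\|_{V^\pm}$ a genuine norm and $-\Delta^\pm_\Omega+z$ invertible; for the Neumann case one additionally recalls that $0$ may be an eigenvalue but this does not affect compactness of the resolvent at any $z>0$.
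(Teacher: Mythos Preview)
Your proposal is correct and follows essentially the same approach as the paper: both deduce compactness of the resolvent from Theorem~\ref{t:compact-emb} by showing that $(\Delta^\pm_\Omega-z)^{-1}$ is bounded from $L^2(\Omega)$ into the form domain $H^1_0(\Omega)$ or $H^1(\Omega)$, via the energy identity $Q(u)+z\|u\|^2=\langle f,u\rangle$, and then composing with the compact embedding. The only cosmetic difference is that you phrase the estimate through the first representation theorem and the form norm $\|\cdot\|_{V^\pm}$, while the paper integrates by parts directly on the operator domain and passes through $\min\{1,z\}\|u\|_{H^1}^2$; the content is identical.
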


\begin{proof}
  By Theorem~\ref{t:compact-emb}, $H^1(\Omega)$ is compactly embedded in $L^2(\Omega)$. Since the domain $\dom(\Delta^\pm_\Omega)$ is contained in $H^1(\Omega$), this implies the compactness of the resolvent. To this effect, and for completeness sake, we replicate  the argument of \cite[Thm.\ 10.20]{Grigoryan-book}. 
  
  Since $\Delta^{\pm}_{\Omega}$ is a non-positive operator, its resolvent set contains $(0,+\infty)$. Thus, for $z>0$, $R_z:=(\Delta^{\pm}_{\Omega}-z)^{-1}$ is a bounded self-adjoint operator in $L^2(\Omega)$. Moreover, for any $\psi\in L^2(\Omega)$ we have $u:=R_z\psi\in \dom(\Delta^{\pm}_{\Omega})\subset H^1({\Omega})$, whence
  \begin{equation}
    \int_{{\Omega}}|\nabla u|^2\,d\mu_g +z \int_{{\Omega}}|u|^2\,d\mu_g
    = -\int_{{\Omega}}\bar{u}\left(\Delta^{\pm}_{\Omega}u -zu\right) \,d\mu_g 
    = -\int_{\Omega} \bar{u} \psi \,d\mu_g.
  \end{equation}
  By the Cauchy-Schwarz inequality this implies
  \begin{equation}
    \min\{1,z\}\|u\|^2_{H^1} \le \|u\|_{L^2}\|\psi\|_{L^2}\le \|u\|_{H^1}\|\psi\|_{L^2}.
  \end{equation}
  We then get $\|R_z\psi\|_{H^1}\le \max\{1,z^{-1}\}\|\psi\|_{L^2}$ for any $\psi\in L^2(\Omega)$. Since the embedding of $H^1(\Omega)$ in $L^2(\Omega)$ is compact, the operator $R_z : L^2(\Omega) \to L^2(\Omega)$ is compact.
  \end{proof}

{\small

\bibliographystyle{abbrv}
\bibliography{biblio}

}

\end{document}